\theoremstyle{plain}
\newtheorem{thm}{Theorem}[section]
\newtheorem{cor}[thm]{Corollary}
\newtheorem{lem}[thm]{Lemma}
\newtheorem{prop}[thm]{Proposition}
\theoremstyle{definition}
\newtheorem{defn}[thm]{Definition}
\theoremstyle{remark}
\newtheorem{rem}[thm]{Remark}
\newtheorem{ex}[thm]{Example}
\newcommand{\oDnt}{\mathcal{D}_n ^\theta}
\newcommand{\Dnt}{D_n^\theta}
\newcommand{\oDMt}{\mathcal{D}^{\theta}(M)}
\newcommand{\oDntL}{{\mathcal{D}_n ^\theta \times \mathcal{L}}}
\newcommand{\DntL}{{D_n ^\theta \times L}}
\newcommand{\DMtL}{{D^\theta (M) \times L}}
\newcommand{\Map}{\operatorname{Map}}
\author{Inbar Klang}
\title{The factorization theory of Thom spectra and twisted non-abelian Poincar\'e duality}
\begin{document}
\maketitle 
\let\thefootnote\relax\footnote{Inbar Klang is supported by a Gabilan Stanford Graduate Fellowship, Department of Mathematics, Stanford University.}

\begin{abstract}
We give a description of the factorization homology and $E_n$ topological Hochschild cohomology of Thom spectra arising from $n$-fold loop maps $f: A \to BO$, where $A = \Omega^n X$ is an $n$-fold loop space. We describe the factorization homology $\int_M Th(f)$ as the Thom spectrum associated to a certain map $\int _M A \to BO$, where $\int _M A$ is the factorization homology of $M$ with coefficients in $A$. When $M$ is framed and $X$ is $(n-1)$-connected, this spectrum is equivalent to a Thom spectrum of a virtual bundle over the mapping space $\Map_c(M,X)$; in general, this is a Thom spectrum of a virtual bundle over a certain section space. This can be viewed as a twisted form of the non-abelian Poincar\'e duality theorem of Segal, Salvatore, and Lurie, which occurs when $f: A \to BO$ is nullhomotopic. This result also generalizes the results of Blumberg-Cohen-Schlichtkrull on the topological Hochschild homology of Thom spectra, and of Schlichtkrull on higher topological Hochschild homology of Thom spectra. We use this description of the factorization homology of Thom spectra to calculate the factorization homology of the classical cobordism spectra, spectra arising from systems of groups, and the Eilenberg-MacLane spectra $H \mathbb{Z}/p$, $H\mathbb{Z}_{(p)}$, and $H\mathbb{Z}$. We build upon the description of the factorization homology of Thom spectra to study the ($n=1$ and higher) topological Hochschild cohomology of Thom spectra, which enables calculations and a description in terms of sections of a parametrized spectrum. If $X$ is a closed manifold, Atiyah duality for parametrized spectra allows us to deduce a duality between $E_n$ topological Hochschild homology and $E_n$ topological Hochschild cohomology, recovering string topology operations when $f$ is nullhomotopic. In conjunction with the higher Deligne conjecture, this gives $E_{n+1}$ structures on a certain family of Thom spectra, which were not previously known to be ring spectra.
\end{abstract}

\section{Introduction}
In this paper, we study the factorization homology and $E_n$ topological Hochschild cohomology of Thom spectra. Factorization homology has received a considerable amount of interest recently, in large part due to its connection to topological field theories and to configuration space models for mapping spaces. Fixing an $E_n$-algebra $A$, factorization homology $\int_{-}A$ with coefficients in $A$ satisfies the generalized Eilenberg-Steenrod axioms, described by Ayala-Francis in \cite{AF}, of a homology theory for $n$-manifolds. In fact, Ayala-Francis show that all such homology theories arise as $\int_{-} A$ for some $E_n$-algebra. Factorization homology then forms an important class of topological field theories: the ones in which the global observables are determined by the local observables. This is because the Ayala-Francis axioms for factorization homology imply that $\int_M A$, the value of the field theory on the manifold $M$, is determined by $\int_{\mathbb{R}^n} A$ and patching data for $M$.

Another way in which factorization homology generalizes ordinary homology is that it can be modeled using labeled configuration spaces; in fact, it originates from configuration space models for mapping spaces, as in \cite{McD}. If $A$ is a discrete abelian group, $\int_M A$ is the labeled configuration space $A[M]$ from the Dold-Thom theorem, whose homotopy groups are $H_*(M;A)$. For $A$ a more general $E_n$-algebra in topological spaces, Segal in \cite{Seg10} and Salvatore in \cite{Sal} considered configuration spaces with amalgamation, or configuration spaces with summable labels. These are configurations of points in $M^n$ labeled by elements of $A$, with labels combining when points ``collide". When $M$ is framed and $\pi_0(A)$ is a group, this amalgamated configuration space is equivalent to the space of compactly supported maps $\Map_c(M,B^n A)$. This equivalence, attributed to Segal, Salvatore, and Lurie, is called non-abelian Poincar\'e duality, as it reduces to ordinary Poincar\'e duality when $A$ is abelian. The connection between factorization homology and configuration spaces has recently proven to be very fruitful in Knudsen's work \cite{Knu} on rational homology of unordered configuration spaces.

\paragraph{Previous work.} Factorization homology can be difficult to compute, particularly when the $E_n$-algebras are valued in spectra or chain complexes. Descriptions of the factorization homology of free $E_n$-algebras and of $E_n$-enveloping algebras of Lie algebras are known; see Section 5 of \cite{AF} and Section 3 of \cite{Knu}. Suspension spectra provide another class of algebras for which factorization homology is known. This follows from non-abelian Poincar\'e duality, which provides a description of the factorization homology of $n$-fold loop spaces in terms of mapping spaces or section spaces. Factorization homology commutes with the suspension spectrum functor; that is, if $A$ is an $E_n$-space, $\int_M \Sigma^\infty _+ A \simeq \Sigma^\infty _+ \int_M A$. This gives a description of the factorization homology of suspension spectra: $\int_M \Sigma^\infty _+ \Omega^n X \simeq \Sigma^\infty _+ \Map_c(M,X)$ if $M$ is framed. For $M=S^1$, factorization homology specializes to topological Hochschild homology. Thus this description of factorization homology of suspension spectra recovers B\"okstedt and Waldhausen's result relating $THH$ to the free loop space. When $X$ is a closed manifold, Atiyah duality for parametrized spectra can then be used to describe topological Hochschild cohomology of $\Sigma^\infty _+ \Omega X$ (see, e.g., \cite{Mal}): $THC(\Sigma^\infty _+ \Omega X) \simeq LX^{-TX}$.

The goal of this paper is to describe and compute the factorization homology and the $E_n$ Hochschild cohomology of Thom spectra. The Thom spectrum of an $n$-fold loop map to $BO$ or $BGL_1(R)$, which we denote $Th(\Omega^n f)$ or $\Omega^n X^{\Omega^n f}$, is an $E_n$-ring spectrum by a theorem of Lewis (Theorem 9.7.1 of \cite{LMS}). In \cite{BCS}, Blumberg, Cohen, and Schlichtkrull study topological Hochschild homology of Thom spectra, expressing $THH(\Omega X ^{\Omega f})$ as a Thom spectrum of a virtual bundle over $LX$. Factorization homology of $E_{\infty}$-ring spectra agrees with higher topological Hochschild homology (Theorem 5 of \cite{GTZ14} or Proposition 5.1 of \cite{AF}), and in \cite{Sch}, Schlichtkrull describes the higher topological Hochschild homology of Thom spectra of infinite loop maps. In one sense, this is more general than factorization homology, as higher topological Hochschild homology is defined over any CW complex rather than just manifolds, and Schlichtkrull's result therefore applies to any CW complex as well. This, however, does not address Thom spectra of $n$-fold loop maps for $n < \infty$, and there is little known about the topological Hochschild cohomology (higher or otherwise) of Thom spectra.

Factorization homology is a higher-dimensional generalization of Hochschild homology, and the corresponding generalization of Hochschild cohomology is higher Hochschild cohomology. Higher (topological) Hochschild cohomology, $(T)HC _{E_n}(A)$, is an invariant of $E_n$-algebras which naturally extends Hochschild cohomology of algebras. In analogy with Hochschild cohomology, it is important for studying deformations of $E_n$-algebras, and for understanding $E_{n+1}$-structures, for example, those appearing in string topology. Hochschild cohomology of $A$ is a derived mapping object $Rhom_{A\text{-}bimod}(A,A)$ of $A$-bimodules, and higher Hochschild cohomology of an $E_n$-algebra $A$ is similarly $Rhom_{E_n\text{-}A}(A,A)$, derived maps of $E_n$-$A$-modules, see, e.g., Section 3 of \cite{Fra}, Section 2 of \cite{HKV}, or Section 3 of \cite{Hor14} for a definition. For $n=1$, the category of $E_n$-$A$-modules is equivalent to the category of $A$-bimodules. There is an alternate useful description of higher Hochschild homology in terms of maps of left $\int_{S^{n-1} \times \mathbb{R}} A$-modules, $Rhom_{\int_{S^{n-1} \times \mathbb{R}} A}(A,A)$ (see, e.g., Proposition 3.16 of \cite{Fra} or Proposition 3.19 of \cite{Hor16}). This connection with factorization homology was used by Francis in \cite{Fra} to study the tangent complex of $E_n$-algebras, by Ginot, Tradler, and Zeinalian in \cite{GTZ12} to study higher string topology, and by Horel in \cite{Hor16} to prove an \'etale base change theorem for higher topological Hochschild cohomology.

\paragraph{Summary of results.} Our main result about factorization homology of Thom spectra expresses $\int_{M^n} \Omega^n X^{\Omega^n f}$ as a Thom spectrum of a virtual bundle over a mapping or section space, and for framed $M$, we give an explicit map $\Map_c(M, X) \to BO$  whose Thom spectrum is $\int_M \Omega^n X^{\Omega^n f}$, see Theorem \ref{thm-main} (or Theorem \ref{thm-fact-hom-gen}, for generalized Thom spectra). This is a direct generalization of the description in \cite{BCS}. This result can be seen as a twisting of non-abelian Poincar\'e duality: viewing a Thom spectrum as a twisted suspension spectrum, it shows that the factorization homology of a twisted suspension spectrum of $\Omega^n X$ is a twisted suspension spectrum of $\Map_c(M,X)$, or of its section space counterpart. In this paper, we derive two main uses from this result. In Sections 3 and 4, we calculate the factorization homology of cobordism spectra (Corollary \ref{cor-lie-gps} and \ref{cor-sys-gps}), recovering Schlichtkrull's calculations in \cite{Sch}, and of the Eilenberg-MacLane spectra $H\mathbb{Z}/p$, $H\mathbb{Z}_{(p)}$, and $H\mathbb{Z}$ over oriented surfaces (Propositions \ref{prop-HZ/2}, \ref{prop-HZ/p}, \ref{prop-HZ(p)}, and Corollary \ref{cor-HZ}), recovering and generalizing calculations of higher topological Hochschild homology from \cite{Vee}, \cite{BLKRZ}, and \cite{DLR}, although we do not determine the multiplicative structure. In Section 5, we turn to cohomology, building upon our description of the factorization homology of Thom spectra to develop a description of the higher topological Hochschild cohomology of Thom spectra. This requires additional techniques, the main ingredient of which is an action of $\Sigma^\infty _+ \Omega X$ on $\Omega^n X^{\Omega^n f}$.

The loop space $\Omega X$ acts on itself by conjugation, and there are equivalences

$$THC(\Sigma^\infty _+ \Omega X) \simeq Rhom_{\Sigma^\infty _+ \Omega X}(S, (\Sigma^\infty _+ \Omega X)^{ad}), \thinspace THH(\Sigma^\infty _+ \Omega X) \simeq S \wedge^L _{\Sigma^\infty _+ \Omega X} (\Sigma^\infty _+ \Omega X)^{ad}$$

\noindent (see, e.g., Section 4 of \cite{Mal}). $(-)^{ad}$ denotes the conjugation action. Our main results about higher Hochschild cohomology of Thom spectra (Lemma \ref{lem-ring-map} and Theorem \ref{thm-conj-rhom}) generalize this to a conjugation action of $\Omega X$ on $\Omega^n X^{\Omega^n f}$, and give an analogous description of the higher Hochschild homology and cohomology in terms of this action.

\paragraph{Theorem.} $$THC _{E_n}(\Omega^n X^{\Omega^n f}) \simeq Rhom_{\Sigma^\infty _+ \Omega X}(S,\Omega^n X^{\Omega^n f})$$

and

$$THH^{E_n}(\Omega^n X^{\Omega^n f}) \simeq S \wedge^L _{\Sigma^\infty _+ \Omega X} \Omega^n X^{\Omega^n f}$$

The action of $\Sigma^\infty _+ \Omega X$ on $\Omega^n X^{\Omega^n f}
$ is fairly tractable -- in particular, it is homotopically trivial if $f$ is an $E_1$-map -- so this description allows us to compute $E_n$ topological Hochschild cohomology of cobordism spectra and of certain Eilenberg-MacLane spectra, and topological Hochschild cohomology of the Ravenel spectra $X(n)$, see Section 5.

Importantly, this description also implies that $THC_{E_n} (\Omega^n X^{\Omega^n f})$ is the cohomology (that is, section spectrum) of a parametrized spectrum over $X$, with fiber spectrum $\Omega^n X^{\Omega^n f}$. The homology of this parametrized spectrum is the higher topological Hochschild homology, $THH^{E_n}(\Omega^n X^{\Omega^n f})$, which agrees with $\int_{S^n \times \mathbb{R}} \Omega^n X^{\Omega^n f}$ if $\Omega^n X^{\Omega^n f}$ is $E_{n+1}$, see Section 3.2 of \cite{Fra}. If $X$ happens to be a closed manifold, as is the case in (higher) string topology, Atiyah duality for parametrized spectra gives:

\paragraph{Corollary.} If $X$  is a closed manifold and $THH^{E_n}(\Omega^n X ^{\Omega^n f}) \simeq \Map(S^n,X)^{l^n(f)}$, then
$$THC _{E_n}(\Omega^n X ^{\Omega^n f}) \simeq \Map(S^n, X)^{l^n(f)-TX}$$

\paragraph{} By our description of factorization homology of Thom spectra, there is such a map $l^n(f)$ if $THH^{E_n}$ and $\int_{S^n}$ agree. (This is not always the case; see Section 5 for details.) For $n=1$, $l(f)$ can be taken to be the map of Blumberg-Cohen-Schlichtkrull \cite{BCS}, which gives $THH(\Omega X^{\Omega f})$.

If $A$ is an $E_n$-ring spectrum, then by the higher Deligne conjecture, $THC_{E_n}(A)$ is an $E_{n+1}$-ring spectrum. Thus (Corollary \ref{cor-En+1}):

\paragraph{Corollary.} If $X$ is a closed manifold and $THH^{E_n}(\Omega^n X ^{\Omega^n f}) \simeq \Map(S^n,X)^{l^n(f)}$, $\Map(S^n, X)^{l^n(f)-TX}$ is an $E_{n+1}$-ring spectrum.

\paragraph{} If $f$ is nullhomotopic, this recovers the fact that $\Map(S^n, X)^{-TX}$ is an $E_{n+1}$-ring spectrum, which gives string topology operations on the homology of the free loop space when $n=1$ (see \cite{CJ02}), and higher string topology operations on $H_*(\Map(S^n,X))$ for higher $n$ (see \cite{Hu}, \cite{GS}). In this generality, this result is new. The case $n=1$ answers a question of T. Kragh, and we thank him for his interest in this project.

\paragraph{Methods.} As in \cite{BCS} and \cite{Sch}, our description of the factorization homology of Thom spectra ultimately follows from multiplicative properties of the Thom spectrum functor. Whereas the description directly generalizes to factorization homology, the methods are somewhat different, as the cyclic bar construction and Loday functor used in \cite{BCS} and \cite{Sch} respectively are not available for factorization homology in general. The development of factorization homology, both as a homology theory for manifolds in the Ayala-Francis axiomatic framework and as a monadic two-sided bar construction as in \cite{KM} and \cite{Mil}, and the theory of generalized Thom spectra developed in \cite{ABGHR} and \cite{ABG}, allow us to generalize the description in \cite{BCS} to factorization homology, while doing away with most of the technical difficulty. Although it is not a deep theorem, this description of the factorization homology of Thom spectra allows for calculations and for insight into the higher topological Hochschild cohomology of Thom spectra.

Our description of $\int_M \Omega^n X^{\Omega^n f}$ as a Thom spectrum can be obtained either from the point-set machinery, using the Lewis-May Thom spectrum (see Chapter 9 of \cite{LMS}) and the two-sided bar construction model of factorization homology, or from the $\infty$-categorical machinery, using the theory of Thom spectra from \cite{ABGHR} and the axiomatic framework for factorization homology. For conceptual reasons, much of the work is done using the point-set machinery. The monadic two-sided bar construction $B(D(M),D_n,A)$, which comes with an explicit scanning map to $\Map_c(M,B^n A)$ when $A$ is an $E_n$-space and $M$ is framed, makes clearest the relation to non-abelian Poincar\'e duality, and is conducive to describing the maps $\Map_c(M,B^n A) \to BO$ explicitly. In addressing generalized Thom spectra, the $\infty$-categorical machinery is most efficient.

In order to obtain our description of higher topological Hochschild cohomology of Thom spectra, we construct a ring map $\Sigma^\infty _+ \Omega X \to \int_{S^{n-1} \times \mathbb{R}} \Omega^n X^{\Omega^n f}$ and study the resulting action of $\Sigma^\infty _+ \Omega X$ on $\Omega^n X^{\Omega^n f}$. This allows for computation and for an interpretation via parametrized spectra and Atiyah duality.

\paragraph{Structure of the paper.} In Section 2, we introduce necessary preliminaries on Lewis-May Thom spectra and on factorization homology. This includes brief expositions on both the two-sided bar construction model and the axiomatic approach. In Section 3, we use operadic properties of the Lewis-May Thom spectrum functor, which ensure that it behaves well with respect to the two-sided bar construction, to describe $\int_M \Omega^n X^{\Omega^n f}$ as a Thom spectrum (Theorem \ref{thm-main}). This includes, for framed $M$, an explicit map $\Map_c(M, X) \to BO$ whose Thom spectrum is $\int_M \Omega^n X^{\Omega^n f}$. We then deduce calculations of factorization homology of several important $E_{\infty}$-ring spectra over stably framed manifolds. We also use the Thom isomorphism theorem to give an explicit calculation of $\int_M H\mathbb{Z}/2$ for $M$ an orientable surface. In Section 4, we use the $\infty$-categorical approach to generalized Thom spectra from \cite{ABGHR}, along with the axiomatic description of factorization homology of \cite{AF}, to expand the results of Section 3 to generalized Thom spectra (Theorem \ref{thm-fact-hom-gen}). We then use a Thom isomorphism argument to calculate $\int_M H\mathbb{Z}/p$, $\int_M H\mathbb{Z}_{(p)}$, and $\int_M H\mathbb{Z}$, for $M$ an orientable surface. In Section 5, we describe the higher topological Hochschild cohomology of Thom spectra as a derived mapping spectrum of $\Sigma^\infty _+ \Omega X$-modules (Theorem \ref{thm-conj-rhom}). Via parametrized spectra, we relate higher topological Hochschild homology and cohomology of Thom spectra to Atiyah duality and string topology, which results in new $E_{n+1}$-ring spectra. Theorem \ref{thm-conj-rhom} also gives calculations of (higher) topological Hochschild cohomology of Thom spectra.

\paragraph{Acknowledgments.} This paper represents a part of my Stanford University Ph.D. thesis, and I am grateful to my advisor, Ralph Cohen, for his guidance and support throughout this project. I would like to thank Andrew Blumberg for helpful comments on an earlier draft. I would also like to thank Alexander Kupers, Oleg Lazarev, and Jeremy Miller for helpful conversations. I thank the anonymous referee for their careful reading and constructive input.

\section{Preliminaries}
\subsection{The Lewis-May Thom spectrum functor}
In Section 3, we will use the Lewis-May Thom spectrum functor, which has nice operadic properties that ensure it ``commutes" with factorization homology. We briefly describe this functor; for more details, see Chapter 9 of \cite{LMS}.

Let $U$ be a real inner product space of countably infinite dimension. For $V$ a finite dimensional subspace of $U$, denote by $O(V)$ its group of orthogonal transformations, with classifying space $BO(V)$. Denote
$$BO = colim (BO(V))$$
\noindent where the colimit runs over finite-dimensional linear subspaces of $U$.

Let $X$ be a compactly generated, weak Hausdorff space. To a map $f:X \to BO$, the Lewis-May Thom spectrum functor associates a spectrum indexed on finite-dimensional subspaces of $U$, that is, a set of spaces $E(V)$ with structure maps 
$$\Sigma^W E(V) \to E(W \oplus V)$$
\noindent satisfying certain properties. Denote by $S^V$ the 1-point compactification of $V$.

For a map $f: X \to BO$, denote $X(V) = f^{-1}(BO(V))$. Assemble a prespectrum $T(f)$ from the Thom spaces of the maps $X(V) \to BO(V)$: let $\xi(V)$ be the spherical bundle over $X(V)$ obtained by pulling back the canonical $S^V$-bundle over $BO(V)$ along $f: X(V) \to BO(V)$. Denote by $T(f)(V)$ the Thom space of this spherical bundle; that is, $T(f)(V)$ is obtained from the total space of $\xi(V)$ by collapsing the section at $\infty$ to a point. The structure maps of $T(f)$ are induced by the pullback diagrams
$$\xymatrix{
\Sigma^W_{X(V)} \xi(V) \ar[r] \ar[d] & \xi(W \oplus V) \ar[d] \\
X(V) \ar[r] & X(W \oplus V)
}$$

\noindent Here $\Sigma^W_{X(V)}$ denotes fiberwise suspension.

The Thom spectrum $Th(f)$ is defined to be the spectrification of the Lewis-May prespectrum $T(f)$.

\begin{rem}\label{rem-LMS-properties}
We will often rely on the fact that a weak equivalence over $BO$ (or $BF$, if the maps to $BF$ are ``good"), induces an equivalence of Thom spectra. This also implies that Thom spectra of homotopic maps are equivalent.
\end{rem}

\subsection{Factorization homology}
This subsection describes the two-sided bar construction model of factorization homology, as well as the Ayala-Francis axiomatic characterization of factorization homology as a homology theory for manifolds, and discusses non-abelian Poincar\'e duality, which relates factorization homology in the category of topological spaces to mapping spaces and section spaces.

\subsubsection{Operads and monads}
We now briefly recall some basic properties of operads and monads, and give important examples. We refer the reader to Sections 1 and 2 of \cite{May72} for more details.

Let $(\mathfrak{C}, \otimes, I)$ be a cocomplete symmetric monoidal category, with product $\otimes$ and unit element $I$. Let $\mathfrak{C}^{\Sigma}$ denote the category of symmetric sequences in $\mathfrak{C}$, that is, sequences $\mathcal{C}=(\mathcal{C}(n))$ where each $\mathcal{C}(n)$ is equipped with an action of the symmetric group $\Sigma_n$. Define a composition monoidal product on $\mathfrak{C}^{\Sigma}$ by
$$(\mathcal{C} \circ \mathcal{D})(n) = \coprod_{k, n_1 + ... +n_k =n} \mathcal{C}(k)\otimes_{\Sigma_k} (\mathcal{D}(n_1) \otimes ... \otimes \mathcal{D}(n_k))\otimes_{\Sigma_{n_1} \times ... \times \Sigma_{n_k}} (\amalg_{n!} I)$$

\begin{defn}\label{def-operad}
An operad in $\mathfrak{C}$ is a monoid in the category $\mathfrak{C}^{\Sigma}$ of symmetric sequences with the composition product defined above.
\end{defn}

Roughly speaking, an operad $\mathcal{O}$ consists of a symmetric sequence $(\mathcal{O}(n))$ with maps
$$\mathcal{O}(k)\otimes (\mathcal{O}(n_1) \otimes ... \otimes \mathcal{O}(n_k)) \to \mathcal{O}(n_1 + ... + n_k)$$
\noindent satisfying equivariance and associativity conditions, with a unit map $I \to \mathcal{O}(1)$.

\begin{ex}\label{ex-operads}
We will mainly be interested in the following operads in the category of topological spaces (with cartesian product, and unit the one-point space $*$):
\begin{itemize}
\item Let $U$ be an inner product space of countably infinite dimension. Define the linear isometries operad $\mathcal{L}$ by $\mathcal{L}(n) = Isom(U^n,U)$, the space of linear isometric embeddings $U^n \to U$, with composition product given by multicomposition of linear embeddings and unit $id: U \to U$. The linear isometries operad $\mathcal{L}$ is an $E_{\infty}$ operad, that is, all of its spaces are weakly equivalent to a point.

\item Little discs operads. Let $\theta: B \to BO(n)$ be a fibration; a $\theta$-framing of a manifold $M^n$ is a lift of the classifying map of its tangent bundle over $\theta$. Examples of this are orientation, Spin structure, or tangential framing (a trivialization of the tangent bundle). Note that $\mathbb{R}^n$ has a $\theta$-framing for all nonempty $B$; for each $B$, we fix such a $\theta$-framing (for example, coming from the standard tangential framing of $\mathbb{R}^n$). We will consider the $\theta$-framed little discs operad, $\oDnt$, whose $k^{th}$ space $\oDnt (k)= Emb^{\theta}(\coprod_k \mathbb{R}^n, \mathbb{R}^n)$ is roughly the space of embeddings $\coprod_k \mathbb{R}^n \hookrightarrow \mathbb{R}^n$ preserving $\theta$-framing. For a precise definition, see Section 2 of \cite{KM}. Composition is by multicomposition of embeddings, and the unit is the identity map of $\mathbb{R}^n$. For $B= EO(n)$ contractible, a $\theta$-framing is a tangential framing, and we will denote the corresponding little discs operad by $\mathcal{D}_n$. If $\theta: BG \to BO(n)$ is induced by a continuous group homomorphism $G \to O(n)$, we will sometimes denote $\oDnt$ by $\mathcal{D}_n^G$. The operad $\mathcal{D}_n^G$ is also the semidirect product of $\mathcal{D}_n$ with the group $G$; for a definition of this semidirect product, see Section 2 of \cite{SW}.
\end{itemize}

\end{ex}

\begin{defn}\label{def-alg-operad}
Let $\mathcal{O}$ be an operad in $\mathfrak{C}$. An algebra over $\mathcal{O}$ is the arity 0 component $A$ of a left module over $\mathcal{O}$, of the form $(A, \emptyset, \emptyset,...)$, in the category of symmetric sequences. Roughly, this structure endows $A$ with a unit map $\nu: I \to A$ and operations
$$\zeta_n: \mathcal{O}(n) \otimes A^{\otimes n} \to A$$
\noindent satisfying unitality, associativity and equivariance conditions.
\end{defn}

An algebra $A$ over a little discs operad or an $E_{\infty}$ operad in topological spaces is called grouplike if $\pi_0(A)$ is a group (under the multiplication induced by $\mathcal{O}(2) \times A^2 \to A$). In particular, a connected algebra is grouplike.

\begin{ex}\label{ex-algebras}
\begin{itemize}
\item The space $BO$ is an algebra over $\mathcal{L}$, as are classifying spaces of other stabilized Lie groups. The operad $\mathcal{L}$ is an $E_{\infty}$ operad, so a grouplike algebra over the linear isometries operad $\mathcal{L}$ is an infinite loop space by Theorem 14.4 of \cite{May72}.

\item For any pointed space $X$, $\Omega^n X$ is a $\mathcal{D}_n$-algebra by plugging the compactly supported maps $\mathbb{R}^n \to X$ into the embeddings $\coprod_k \mathbb{R}^n \hookrightarrow \mathbb{R}^n$ (sending any point in $\mathbb{R}^n$ outside the image of these embeddings to the basepoint). Conversely, May's recognition principle (\cite{May72}, Theorem 13.1) states that any grouplike $\mathcal{D}_n$-algebra is weakly equivalent as a $\mathcal{D}_n$-algebra to an $n$-fold loop space.

\item For any pointed $G$-space $X$ and continuous homomorphism $G  \to O(n)$, $\Omega^n X$ is a $\mathcal{D}_n^G$-algebra, as $\mathcal{D}_n^G$ is the semidirect product $\mathcal{D}_n \rtimes G$. Embeddings of discs act as above, and $G$ acts on $X$, and on the loop coordinate by rotation and reflection of loops (via $G \to O(n)$). Conversely, the equivariant recognition principle of Salvatore-Wahl states that any grouplike $\mathcal{D}_n^G$-algebra is weakly equivalent as a $\mathcal{D}_n^G$-algebra to an $n$-fold loop space on a pointed $G$-space, see Theorem 3.1 of \cite{SW}.
\end{itemize}
\end{ex}

\bigskip

An operad in topological spaces defines a monad on topological spaces and on spectra.

\paragraph{Monads from operads in topological spaces.} Given an operad $\mathcal{P}$ in topological spaces, one has an associated monad $P$ on topological spaces given by
$$PX = \coprod_n \mathcal{P}(n) \times_{\Sigma_n} X^n$$

An operad $\mathcal{P}$ with a map to the linear isometries operad $\mathcal{L}$ also defines a monad $P$ on Lewis-May spectra using the twisted half-smash product, see, e.g., Chapter 7 of \cite{LMS}. Briefly, each linear isometry $f: U^n \to U$ gives a way to internalize the external smash product from spectra indexed on $U^n$ to spectra indexed on $U$; as a result, one gets a twisted half-smash product $\mathcal{L}(n) \ltimes (X_1 \wedge ... \wedge X_n)$. If $A$ is a space equipped with a map to $\mathcal{L}(n)$, the twisted half-smash product $A \ltimes (X_1 \wedge ... \wedge X_n)$ is defined, and thus if $\mathcal{P}$ is a topological operad with a map to $\mathcal{L}$, we can use the twisted half-smash products $\mathcal{P}(n) \ltimes X^{\wedge n}$ to form $PX = \bigvee_n \mathcal{P}(n) \ltimes_{\Sigma_n} X^{\wedge n}$. As $\mathcal{P}$ is an operad, this is a monad. We say that a spectrum $E$ is an $\mathcal{P}$-algebra if it is a $P$-algebra.

\begin{defn}\label{def-right-modules}

\begin{itemize}
\item A right module over an operad $\mathcal{O}$ is a symmetric sequence $\mathcal{R}$ with a map $\mathcal{R} \circ \mathcal{O} \to \mathcal{R}$ satisfying associativity. That is, a collection of maps 
$$\mathcal{R}(k)\otimes (\mathcal{O}(n_1) \otimes ... \otimes \mathcal{O}(n_k)) \to \mathcal{R}(n_1 + ... + n_k)$$
\noindent satisfying equivariance and associativity properties.

\item A right functor over a monad $O$ is a functor $R: \mathfrak{C} \to \mathfrak{C}$ with a natural transformation $RO \to R$, satisfying associativity.
\end{itemize}
\end{defn}

As before, in both spaces and spectra, a right module over an operad defines a right functor over a monad. For details, see, e.g., Section 2 of \cite{Mil}.

\begin{ex}\label{ex-right-modules}
\begin{itemize}
\item Any operad is a right module over itself, with action map given by the operad structure map $\mathcal{O} \circ \mathcal{O} \to \mathcal{O}$.

\item Let $M^n$ be a $\theta$-framed manifold. Denote by $\oDMt$ the right module over $\oDnt$ whose $k^{th}$ space $\oDMt(k)= Emb^{\theta}(\coprod_k \mathbb{R}^n, M)$ is roughly the space of embeddings $\coprod_k \mathbb{R}^n \hookrightarrow M$ preserving the $\theta$-framing. The module structure is given by multicomposition of embeddings. For a precise definition, see Section 2 of \cite{KM}. This right module gives a right functor $D^{\theta}(M)$ over the monad $\Dnt$.

\item On a pointed space $X$, the reduced monad $\mathbb{D}_n X$ is obtained from $D_n X$ by a basepoint relation, see Notations 2.3 and Construction 2.4 of \cite{May72}. For a $D_n$-algebra $A$, we take its basepoint to be the unit. $D_n$ and $\mathbb{D}_n$ are related by $D_n X = \mathbb{D}_n(X_+)$. Reduced versions of $D_n^\theta$ and the right functor $D^\theta(M)$ can also be defined using a basepoint relation.

\item The $n$-fold suspension $\Sigma^n$ is a right functor over $\mathbb{D}_n$ by ``scanning"; for a pointed space $X$, the map $\Sigma^n \mathbb{D}_n X \to \Sigma^n X$ is defined as follows:

For $P= (t,f_1,...f_n,x_1,...x_n)$, if $t$ is not in the image of any of the embeddings $f_i$, take $P$ to the basepoint. Else, if $t \in im(f_i)$, take $P$ to $(f_i^{-1}(t),x_i)$.

Similarly, $\Sigma^n_+$ is a right functor over $D_n$.
\end{itemize}

\end{ex}

\subsubsection{The two-sided monadic bar construction}
Let $O$ be a monad coming from a well-behaved operad in topological spaces (that is, satisfying the Cofibration Hypothesis of \cite{EKMM} VII.4), $R$ a right module over $O$, and $A$ an $O$-algebra. If $\mathcal{O}(0) = *$, we require that the unit of $A$ is a nondegenerate basepoint. If $A$ is a space, we require that $A$ is homotopy equivalent to a cofibrant space (e.g., a cell complex), and if $A$ is a spectrum, we require that it is homotopy equivalent to a cofibrant spectrum (e.g., a cellular spectrum). Then, by Proposition 5.8 of \cite{ABGHR2}, $B(O,O,A)$ is homotopy equivalent to a cofibrant $O$-algebra and thus a derived, or homotopy-invariant, version of the tensor product
$$\xymatrix{
R\otimes_O A =coeq(ROA \ar@<-0.5ex>[r] \ar@<0.5ex>[r] & RA)
}$$
\noindent is given by the two-sided bar construction, $B(R,O,A)$, which is the geometric realization of the simplicial object $B_i(R,O,A)= R O^i A$. $O^iA$ denotes the $i$-fold iteration of the monad $O$ applied to $A$, and $R O^i A$ is obtained by applying $R$ to $O^i A$. The face maps are given by compositions ($RO \to R$, $O^2 \to O$, and $OA \to A$), and the degeneracies by units $Id \to O$.

\begin{ex}\label{ex-delooping}
If $A$ is a grouplike $\mathcal{D}_n$-algebra, we define its $n$-fold delooping $B^n A$, by $B(\Sigma^n_+, D_n, A)$. This is equivalent to the May model $B(\Sigma^n, \mathbb{D}_n, A)$; see Section 5 of \cite{KM} for more details about this model.
\end{ex}

\begin{defn}\label{def-DML} Let $\DMtL$ denote the functor on spaces or spectra associated to the right $\oDntL$-module $\mathcal{D}^\theta (M) \times \mathcal{L}$. For pointed spaces, we denote its reduced version by $\mathbb{D}^\theta(M) \times L$.
\end{defn}

\begin{defn}\label{def-factorization-homology}
Let $A$ be a $\oDntL$-algebra (in spaces or spectra) whose underlying space or spectrum is cofibrant, and let $M^n$ be a $\theta$-framed $n$-manifold. Define the factorization homology of $A$ over $M$ by
$$\int_M A = B(\DMtL, \DntL, A)$$

\noindent If $A$ is not underlying cofibrant, one can first cofibrantly replace $A$.
\end{defn}

\begin{rem}\label{rem-L-model}
In \cite{KM}, factorization homology of a $\oDnt$-algebra $A$ in topological spaces is defined as
$$B(D^\theta (M), D^\theta _n, A)$$
This is naturally equivalent to the definition above: the maps
$$\mathcal{D}^\theta (M) \times \mathcal{L} \to \mathcal{D}^\theta (M)$$
$$\oDntL \to \mathcal{D}^\theta _n$$
\noindent are $\Sigma$-equivariant homotopy equivalences, hence by, e.g., Section 9 of \cite{May09}, the natural transformations $\DMtL \to D^\theta (M)$, $\DntL \to D^\theta _n$ are natural weak equivalences. As in Section 2.4 of \cite{Mil}, these functors are proper, hence the associated simplicial spaces are ``good", and therefore for a $\mathcal{D}^\theta _n$-algebra $A$, the induced map
$$B(\DMtL, \DntL, A) \to B(D^\theta (M), D^\theta _n, A)$$ is a weak equivalence. 
\end{rem}

The theorem below, due to Segal, Salvatore, Lurie, Ayala-Francis, and others, identifies (for an algebra in topological spaces) factorization homology over a framed manifold as a certain mapping space. For a proof, see, for example, Theorem 2.26 of \cite{Mil}.

For a space $X$ and a based space $Y$, denote $\
\Map_c(X,Y)=\Map_*(X^+,Y)$, where $X^+$ is the one-point compactification. If $X$ is already compact, $X^+ = X_+$ is obtained by adding a disjoint basepoint.

\begin{thm}{(Non-abelian Poincar\'e duality for framed manifolds)}\label{thm-NAPD-framed}

Let $A$ be a $\mathcal{D}_n$-algebra in topological spaces. Let $M^n$ be a tangentially framed manifold. Then there is a natural scanning map
$$s: \int_M A \to \Map_c(M,B^n A)$$
\noindent which is a weak equivalence if $A$ is grouplike.
\end{thm}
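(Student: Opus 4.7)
My plan is to build the scanning map at the point-set level using the two-sided bar construction, treat the case $M = \mathbb{R}^n$ by May's recognition principle, and reduce the general case by inducting over a handle decomposition of $M$, using that both sides satisfy a matching excision/Mayer--Vietoris formula.

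The scanning map is built by directly generalizing Example \ref{ex-right-modules}(4), which exhibits $\Sigma^n$ as a right $\mathbb{D}_n$-functor. Applied to a framed manifold $M$, the same construction produces a natural transformation of right $\mathbb{D}_n$-functors $\mathbb{D}^\theta(M)(-) \to Map_c(M,\Sigma^n(-))$, sending a configuration of labeled discs $(f_1,\ldots,f_k;x_1,\ldots,x_k)$ to the pointed map $M^+ \to \Sigma^n X$ that takes $m \in f_i(\mathbb{R}^n)$ to $(f_i^{-1}(m),x_i) \in \mathbb{R}^n_+ \wedge X$ and collapses everything else to the basepoint. Applying $B(-,\mathbb{D}_n,A)$ and commuting $Map_c(M,-)$ past the geometric realization yields
$$s : \int_M A \simeq B(\mathbb{D}^\theta(M),\mathbb{D}_n, A) \longrightarrow Map_c\bigl(M, B(\Sigma^n,\mathbb{D}_n, A)\bigr) \simeq Map_c(M, B^n A),$$
where the last equivalence uses Example \ref{ex-delooping}. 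Naturality in $M$ is automatic from the construction.

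For $M = \mathbb{R}^n$, the source simplifies to $A$ (since $\mathbb{D}^\theta(\mathbb{R}^n) = \mathbb{D}_n$ and the augmentation $B(\mathbb{D}_n,\mathbb{D}_n,A)\to A$ is an equivalence) and the target is $\Omega^n B^n A$; under these identifications $s$ is the group-completion unit $A \to \Omega^n B^n A$, which is a weak equivalence precisely when $A$ is grouplike by the May recognition principle recalled in Example \ref{ex-algebras}(2). Disjoint unions are handled by the monoidality $\int_{M\sqcup M'} A \simeq \int_M A \times \int_{M'} A$ (immediate from the bar construction) combined with $Map_c(M\sqcup M', B^n A) = Map_c(M, B^n A) \times Map_c(M', B^n A)$ and naturality of $s$. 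For the inductive step I would use $\otimes$-excision: a collar gluing $M = M_0 \cup_{N \times \mathbb{R}} M_1$ gives $\int_M A \simeq \int_{M_0} A \otimes^L_{\int_{N \times \mathbb{R}} A} \int_{M_1} A$, and I would match this with the Mayer--Vietoris homotopy pullback
$$Map_c(M, B^n A) \simeq Map_c(M_0, B^n A) \times^h_{Map_c(N \times \mathbb{R}, B^n A)} Map_c(M_1, B^n A).$$
Since $s$ is natural with respect to such decompositions, induction on the number of handles --- with base case the disk --- concludes the proof.

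The main obstacle I expect is checking that the two gluing descriptions really are compatibly matched by $s$, and in particular that the right-hand square above is genuinely a \emph{homotopy} pullback. This is where the grouplikeness hypothesis enters essentially: it ensures that $B^n A$ is connected, that $\Omega^n B^n A \simeq A$, and that the restriction map into $Map_c(N \times \mathbb{R}, B^n A) \simeq \Omega Map_c(N, B^n A)$ is well-behaved enough for the Mayer--Vietoris square to be a homotopy pullback. Making this level-wise compatibility precise --- comparing the simplicial model of the relative tensor $\otimes^L_{\int_{N\times\mathbb{R}} A}$ on one side with the homotopy pullback on the other, through the scanning map $s$ --- is the real technical crux; the rest is formal.
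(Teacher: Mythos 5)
The paper itself does not prove this theorem: it records the scanning map (your construction matches it exactly, being the bar construction applied to the natural transformation $\mathbb{D}(M)(-) \to Map_c(M,\Sigma^n -)$) and then cites Theorem 2.26 of \cite{Mil} for the equivalence. Your base case $M=\mathbb{R}^n$ and the disjoint-union step are fine, and inducting over a handle decomposition is one of the standard routes to this theorem. The genuine gap is in the inductive step. The functor $Map_c(-, B^n A) = Map_*((-)^+, B^n A)$ is \emph{covariant} in open embeddings of $n$-manifolds: an open inclusion $U \hookrightarrow M$ induces a collapse $M^+ \to U^+$, hence an extension-by-basepoint map $Map_c(U, B^n A) \to Map_c(M, B^n A)$. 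There are no natural restriction maps $Map_c(M_i, B^n A) \to Map_c(N\times\mathbb{R}, B^n A)$, so the homotopy pullback square you propose is not even defined, and the relevant excision statement cannot be a limit. It is a homotopy \emph{colimit}: a two-sided bar construction
$$B\bigl(Map_c(M_0, B^n A),\; Map_c(N\times\mathbb{R}, B^n A),\; Map_c(M_1, B^n A)\bigr) \xrightarrow{\ \sim\ } Map_c(M, B^n A),$$
where $Map_c(N\times\mathbb{R}, B^n A) \simeq \Omega\, Map_c(N, B^n A)$ is a monoid via the $\mathbb{R}$-direction, acting on the other two terms by sliding the collar, and this is precisely the relative tensor $\otimes^L$ of the Ayala--Francis excision axiom specialized to $(\mathrm{Top},\times)$ — so after this replacement the two sides of your induction are at least the same type of object. (As a sanity check with $n=1$, $Y=BA$: the bar construction $B(\Omega Y,\,\Omega Y\times\Omega Y,\,\Omega Y)$ is the cyclic bar construction and returns $LY$, whereas a pullback $\Omega Y \times^h_{\Omega Y\times\Omega Y}\Omega Y$ along diagonals returns $L\Omega Y \simeq \Omega LY$, which is not $LY$.) Even after the fix, proving that $Map_c(-, B^n A)$ satisfies this colimit-excision is exactly where the $(n-1)$-connectivity of $B^n A$, i.e.\ the grouplikeness of $A$, does its work — it is not a formal Mayer--Vietoris fact, and that verification is the genuine content that you would still need to supply, e.g.\ following \cite{Mil} or via a scanning/quasifibration argument in the style of Segal and McDuff.
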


For future reference, we describe this scanning map. Our model for $B^nA$ is $B(\Sigma^n_+, D_n, A)$, and the map is given on each simplicial level as
$$s_i: D(M)D_n ^i A \to \Map_c(M, \Sigma^n_+ D_n ^i A)$$
\noindent which comes from the natural transformation $S: D(M)X \to \Map_c(M, \Sigma^n_+ X)$, defined as follows: take $S(f_1,...f_n,x_1,...x_n)(m)$ to be the basepoint if $m$ is not in the image of any $f_i$, and $(f_i^{-1}(m),x_i)$ if $m \in im(f_i)$. For more details, see Section 5 of \cite{KM}. Using the natural weak equivalences in Remark \ref{rem-L-model}, we obtain a scanning map

$$B(D(M)\times L, D_n \times L, A) \to Map_c(M, B^n A)$$

\subsubsection{Axiomatic characterization}
In \cite{AF}, Ayala-Francis characterize factorization homology with coefficients in a $\oDnt$-algebra $A$, $\int_{-} A$, as a homology theory for $\theta$-framed $n$-manifolds. Objects in the $\infty$-category $Mfld_n^{\theta}$ are $\theta$-framed $n$-manifolds, and morphism spaces are $Emb^{\theta}(M,N)$ (see Example \ref{ex-operads}, and Definition 2.7 \cite{AF} or Section 2 of \cite{KM} for a definition of these embedding spaces). Disjoint union gives a symmetric monoidal product. They define a homology theory for $\theta$-framed $n$-manifolds with coefficients in a symmetric monoidal $\infty$-category $(\mathfrak{C}, \otimes)$ as a symmetric monoidal functor 
$$H: Mfld_n^{\theta} \to \mathfrak{C}$$ 
\noindent satisfying excision: if
$$M = M' \cup_{M_0 \times \mathbb{R}} M''$$
\noindent is a decomposition as $\theta$-framed manifolds, then
$$H(M')\otimes_{H(M_0 \times \mathbb{R})} H(M'') \to H(M)$$
\noindent is an equivalence. As we are working in an $\infty$-category, the relative tensor product corresponds to the derived tensor product.

The model for $\infty$-categories used in this characterization is the framework of quasicategories; in this framework, $\infty$-categories and symmetric monoidal $\infty$-categories are treated in \cite{HTT} and \cite{HA}.

Ayala-Francis prove that $\int_{-} A$ is the unique (up to weak equivalence) homology theory for $\theta$-framed $n$-manifolds satisfying
$$\int_{\mathbb{R}^n} A \simeq A$$

In Section 4 of \cite{AF}, a non-abelian Poincar\'e duality theorem for $\theta$-framed manifolds is proven using this axiomatic description. We interpret this theorem for a $\mathcal{D}_n^G$-algebra of the form $A = \Omega^n X$ for $X$ a pointed $G$-space, see Example \ref{ex-algebras}.

Define a fiber bundle $p_X$ over $BG$ with fiber $X$ by $p_X: EG \times_G X \to BG$, induced by the usual map $EG \to BG$. Given a continuous homomorphism $G \to O(n)$, let $M^n$ be a $G$-framed manifold, and let $\tau: M \to BG$ denote a lift  (given by the $G$-framing) of its tangent bundle from $BO$ to $BG$. Then we can pull $p_X$ back along $\tau$ to obtain an $X$-bundle on $M$, $\tau^* p_X$. Denote this bundle (as well as its total space) by $B^{TM}A$, and its space of compactly supported sections by $\Gamma_c(B^{TM}A)$. This gives $B^{TM}A$ as a bundle over $M$ with fiber $X$. One can think of it as being twisted by $TM$, hence it makes sense to think of $\Gamma_c(B^{TM}A)$ as a twisted mapping space.

The theorem below can be thought of as a generalization of Theorem \ref{thm-NAPD-framed}, where the mapping space is twisted by the tangent bundle of $M$. For a proof, see \cite{AF} or \cite{Sal}.

\begin{thm}{(Non-abelian Poincar\'e duality)}\label{thm-NAPD}
There is a natural equivalence
$$\int_M A \simeq \Gamma_c(B^{TM}A)$$
\end{thm}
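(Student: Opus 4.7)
My plan is to invoke the Ayala--Francis uniqueness theorem: factorization homology is the unique symmetric monoidal functor $Mfld_n^G \to \mathfrak{C}$ satisfying excision and taking the value $A$ on $\mathbb{R}^n$. It therefore suffices to show that the assignment $M \mapsto \Gamma_c(B^{TM}A)$ extends to such a functor and takes the correct value on $\mathbb{R}^n$.

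First, I would promote $\Gamma_c(B^{T(-)}A)$ to a functor on $Mfld_n^G$. A $G$-framed open embedding $j \colon M \hookrightarrow N$ induces a pullback square of the $G$-framings, identifying $B^{TM}A \simeq j^* B^{TN}A$; then a compactly supported section on $M$ extends by the basepoint across $N \setminus j(M)$, which is well-defined because the support is compact inside $j(M)$. Symmetric monoidality is immediate: a compactly supported section on $M \sqcup N$ is a pair of compactly supported sections, so $\Gamma_c(B^{T(M \sqcup N)}A) \simeq \Gamma_c(B^{TM}A) \times \Gamma_c(B^{TN}A)$. For the value on $\mathbb{R}^n$, the fixed $G$-framing factors the tangent classifier through the point, so $B^{T\mathbb{R}^n}A$ is the trivial $X$-bundle $\mathbb{R}^n \times X$, and $\Gamma_c(\mathbb{R}^n \times X) = Map_c(\mathbb{R}^n, X) = \Omega^n X = A$.

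The main obstacle, and the real content of the theorem, is excision. Given a collar decomposition $M = M' \cup_{M_0 \times \mathbb{R}} M''$, I would show that the natural map
\[
\Gamma_c(B^{TM'}A) \otimes^{\mathbb{L}}_{\Gamma_c(B^{T(M_0 \times \mathbb{R})}A)} \Gamma_c(B^{TM''}A) \longrightarrow \Gamma_c(B^{TM}A)
\]
is an equivalence. The key point is that $\Gamma_c(B^{T(M_0 \times \mathbb{R})}A)$ is an $E_1$-algebra via stacking in the $\mathbb{R}$-coordinate, and $\Gamma_c(B^{TM'}A)$, $\Gamma_c(B^{TM''}A)$ are right/left modules coming from the collar. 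The two-sided bar construction then glues a section on $M'$ to a section on $M''$ along a path of sections on the collar, realizing any compactly supported section on $M$ after a small isotopy of its support away from the collar hypersurface. The grouplike hypothesis (which holds since $A = \Omega^n X$) is what makes the restriction maps $\Gamma_c(B^{TM'}A) \to \Gamma_c(B^{T(M_0 \times \mathbb{R})}A)$ behave like fibrations, so that the bar construction computes the actual homotopy pullback/pushout of section spaces; without grouplikeness one only has a map in one direction (group-completion would be required).

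Finally, one concludes by checking that the scanning map, or equivalently the unit of the adjunction comparing $\int_M A$ to $\Gamma_c(B^{TM}A)$, is a natural transformation of homology theories in the Ayala--Francis sense, hence an equivalence by uniqueness. I expect the excision step to be the technically delicate part, and it is precisely where one uses that $A$ arises as $\Omega^n X$ for a pointed $G$-space $X$ rather than an arbitrary $\mathcal{D}_n^G$-algebra; in the general case the section-space description must be replaced by the classifying bundle $B^{TM}A$ built from a chosen delooping, and excision is verified in \cite{AF} by a direct sheaf-theoretic argument.
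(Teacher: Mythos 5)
The paper itself does not prove this theorem: it states the result and refers the reader to \cite{AF} (Section 4) and \cite{Sal} for the proof. Your overall strategy — promote $M \mapsto \Gamma_c(B^{TM}A)$ to a symmetric monoidal functor on $\theta$-framed manifolds, verify excision, compute the value on $\mathbb{R}^n$, and conclude via Ayala--Francis uniqueness — is exactly the structure of the Ayala--Francis argument, so at that level the plan is sound, and your treatments of functoriality, symmetric monoidality, and the value on $\mathbb{R}^n$ are correct.

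Your discussion of excision, however, contains a genuine error, and the role you assign to the grouplike hypothesis is misplaced. There are no natural ``restriction maps'' $\Gamma_c(B^{TM'}A) \to \Gamma_c(B^{T(M_0 \times \mathbb{R})}A)$: a compactly supported section on $M'$ does \emph{not} restrict to a compactly supported section on the open collar $M_0 \times \mathbb{R}$, since the intersection of a compact support in $M'$ with $M_0 \times \mathbb{R}$ need not be compact in $M_0 \times \mathbb{R}$. The natural maps on compactly supported section spaces go the other way, by extension-by-basepoint along the open embeddings $M_0 \times \mathbb{R} \hookrightarrow M'$ and $M_0 \times \mathbb{R} \hookrightarrow M''$; these, together with the stacking $E_1$-structure on $\Gamma_c(B^{T(M_0 \times \mathbb{R})}A)$, are what give $\Gamma_c(B^{TM'}A)$ and $\Gamma_c(B^{TM''}A)$ their module structures. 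So the fibration claim for the (nonexistent) restriction maps is not the right explanation of where grouplikeness is used. Grouplikeness actually enters in matching the value on $\mathbb{R}^n$: you correctly compute $\Gamma_c(B^{T\mathbb{R}^n}A) \simeq Map_c(\mathbb{R}^n, B^n A) = \Omega^n B^n A$, but this recovers $A$ only when $A$ is grouplike — otherwise it gives the group completion, and the uniqueness argument cannot identify the section functor with $\int_{-}A$. Excision of compactly supported section spaces is a separate local-to-global statement whose verification is the delicate part; the sketch of ``gluing sections along a path of sections'' gestures in the right direction but does not substitute for the argument in \cite{AF} Section 4 or \cite{Sal}.
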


\paragraph{Interaction with point-set model.} This axiomatic characterization agrees with Definition \ref{def-factorization-homology} (up to weak equivalence). In the axiomatic description, a $\oDnt$-algebra is a symmetric monoidal functor $Disk_n^\theta \to \mathfrak{C}$, where $Disk_n^\theta$ is the full subcategory of $Mfld_n^\theta$ on disjoint unions of $\mathbb{R}^n$. Factorization homology is the (homotopy) left Kan extension of this functor along the inclusion $Disk_n^\theta \hookrightarrow Mfld_n^\theta$. This is the derived tensor product of the right module of discs in $M$ with the algebra $A$, over the little discs operad. As in Section 7 of \cite{Hor}, we can model this as $(\DMtL) \otimes_{(\DntL)} A^c$, where $A^c$ is a cofibrant replacement of $A$ in the category of $(\DntL)$-algebras. If $A$ is cofibrant in the underlying category, we can take this cofibrant replacement to be $B(\DntL,\DntL,A)$, and the derived tensor product is thus the two-sided monadic bar construction $B(\DMtL, \DntL, A)$.

\section{Operadic behavior of Thom spectra and factorization homology}
In this section, we use the two-sided bar construction model of factorization homology, and results from Chapter 9 of \cite{LMS} on behavior of Thom spectra under monads, to describe the factorization homology of Thom spectra of $n$-fold loop maps. Chapter 9 of \cite{LMS} requires operads to be augmented over $\mathcal{L}$, so we will verify that this description still applies to Thom spectra of $n$-fold loop maps $\Omega^n X \to \Omega^n B^{n+1} O$. For $M$ framed, $\int_M Th(f)$ is a Thom spectrum of a virtual bundle over a mapping space; we will explicitly describe the map $\Map_c(M,X) \to BO$ giving this Thom spectrum. In Section 3.1, we will use this to calculate factorization homology of some Thom spectra, such as cobordism spectra and $H\mathbb{Z}/2$.

In order to show that Lewis-May Thom spectra of $E_n$-maps behave well with respect to the two-sided bar construction model of factorization homology, we will rely on a theorem of Lewis (Proposition 9.6.1, Proposition 9.6.2, and Theorem 9.7.1 of \cite{LMS}), specifying the behavior of Thom spectra under monads defined by operads that map to the linear isometries operad:

\begin{thm}{(Lewis)}\label{thm-Lewis} Let $\mathcal{C}$ be an operad augmented over $\mathcal{L}$, and let $C$ be the monad associated to $\mathcal{C}$. Let $f:X\to BO$ be a map. Then there is a natural, coherent (i.e., respecting the transformations $Id \to C$, $C^2 \to C$) isomorphism $CTh(f) \cong Th(Cf)$, where $Cf$ is defined by 
$$CX\to CBO\to BO$$
\noindent using the action of $\mathcal{C}$ on $BO$ via $\mathcal{L}$.

Furthermore, if $\mathcal{P}$ is a right $\mathcal{C}$-module, there is a natural isomorphism $PTh(f) \cong Th(Pf)$, compatible with the right $C$-module structure on $P$.

It follows that if $X$ is a $\mathcal{C}$-algebra in the category of spaces and $f$ is a $\mathcal{C}$-map, then $Th(f)$ is a $\mathcal{C}$-algebra in the category of spectra, with action induced from the isomorphism above.
\end{thm}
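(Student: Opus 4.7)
The plan is to construct the isomorphism $CTh(f) \cong Th(Cf)$ one ``arity'' at a time, by analyzing how the twisted half-smash product with a space over $\mathcal{L}(n)$ acts on Thom spectra. Recall that $CTh(f) = \bigvee_{n \geq 0} \mathcal{C}(n) \ltimes_{\Sigma_n} Th(f)^{\wedge n}$, so it suffices to produce compatible natural isomorphisms
$$\mathcal{C}(n) \ltimes_{\Sigma_n} Th(f)^{\wedge n} \cong Th(Cf|_n),$$
where $Cf|_n$ denotes the restriction of $Cf : CX \to BO$ to the summand $\mathcal{C}(n) \times_{\Sigma_n} X^n$, and then to check that the resulting isomorphism is compatible with the multiplication $C^2 \to C$ and the unit $Id \to C$.

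The central technical lemma I would isolate is the following: for any space $A$ equipped with a map to $\mathcal{L}(n)$ and Thom spectra $Th(f_1), \ldots, Th(f_n)$ of maps $f_i: X_i \to BO$, there is a natural isomorphism
$$A \ltimes (Th(f_1) \wedge \cdots \wedge Th(f_n)) \cong Th(g_A),$$
where $g_A: A \times X_1 \times \cdots \times X_n \to BO$ sends $(a, x_1, \ldots, x_n)$ to the image in $BO$ obtained by applying the linear isometry $a \in \mathcal{L}(n)$ to the external Whitney sum of the bundles classified by the $f_i(x_i)$. I would prove this at the prespectrum level: the space $A \ltimes (T(f_1) \wedge \cdots \wedge T(f_n))(V)$ is assembled, via a colimit over subspaces of $U^n$ whose image under the isometries in $A$ contains $V$, from fiberwise smash products of the Thom spaces $T(f_i)(V_i)$. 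Each such fiberwise smash product is the Thom space of the external Whitney sum of the bundles $\xi_i(V_i)$, and the isometry records exactly how to internalize this as an orthogonal bundle over the chosen subspace of $U$. Assembling over $V$ and spectrifying yields the claimed Thom spectrum.

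Applying this lemma with all $f_i = f$, all $X_i = X$, and $A = \mathcal{C}(n)$, then passing to $\Sigma_n$-orbits, gives the arity-$n$ identification. Taking the wedge over $n$ and using that the Thom spectrum functor preserves wedges produces $CTh(f) \cong Th(Cf)$. Coherence with the structure maps $C^2 \to C$ and $Id \to C$ reduces to compatibility of the twisted half-smash product with operadic composition on $\mathcal{C}$ and with the augmentation $\mathcal{C} \to \mathcal{L}$; both become naturality statements for the central lemma in the base space over the linear isometries operad. The algebra statement is then formal: if $f$ is a $\mathcal{C}$-map, meaning the action $CX \to X$ lifts the canonical action $CBO \to BO$ coming from the augmentation into $\mathcal{L}$, then $Cf$ factors through $f$, and applying $Th$ gives an action $CTh(f) \cong Th(Cf) \to Th(f)$; the unit and associativity axioms follow directly from the coherence just established.

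The main obstacle will be the prespectrum-level identification in the central lemma: one must carefully track the colimit defining the twisted half-smash product, verify that its interaction with the fiberwise Thom space construction produces precisely the pulled-back bundle in the correct indexing universe, and maintain $\Sigma_n$-equivariance throughout so that the passage to $\Sigma_n$-orbits is well-defined. This is essentially the content of Lewis's argument in Chapter IX of \cite{LMS}, and proceeds by direct inspection of how isometries act on indexing subspaces and on Thom spaces.
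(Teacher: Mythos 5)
The paper does not prove this statement; it is cited verbatim as Lewis's Theorem 9.7.1 from \cite{LMS}, and the only ``proof'' in the paper is the citation. So there is no proof in the paper to compare against directly. That said, your sketch is a faithful reconstruction of Lewis's own argument in Chapter IX of \cite{LMS}: the central lemma you isolate (that for a space $A$ over $\mathcal{L}(n)$, the twisted half-smash $A \ltimes (Th(f_1) \wedge \cdots \wedge Th(f_n))$ is naturally the Thom spectrum of the map sending $(a,x_1,\ldots,x_n)$ to the internalization via the isometry underlying $a$ of the external Whitney sum classified by the $f_i(x_i)$) is precisely the mechanism Lewis develops in IX.4--IX.6, and assembling it over arities and appealing to the monad/operad structure to get coherence and the algebra statement is exactly IX.7. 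One small imprecision: you write $a \in \mathcal{L}(n)$ when you mean the image of $a \in A$ under the structure map $A \to \mathcal{L}(n)$. The step you flag as the main obstacle is indeed where the real work is in \cite{LMS} --- in particular, one must verify that spectrification interacts correctly with the twisted half-smash product (Lewis handles this by working with ``good'' inclusion prespectra so that the colimit computing the twisted half-smash can be taken levelwise before spectrifying), and that the identification is $\Sigma_n$-equivariant so passing to orbits is legitimate. Your sketch waves at both but correctly identifies them as the load-bearing points, so as a blind reconstruction of a cited theorem this is on target.
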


We use this theorem to show that the Lewis-May Thom spectrum functor ``commutes" with factorization homology; that is, to show that for $f:A \to BO$ an $E_n$-map and $M$ an $n$-manifold,

$$\int_M Th(f) \simeq Th(\int_M f)$$

The map $\int_M f$ can be described, independently of the model used for factorization homology, as the following composite:

$$\int_M A \xrightarrow{\int_M (A \to BO)} \int_M BO \xrightarrow{\int_{M \to pt} BO} BO$$

The latter map in this composite uses the fact that for an $E_\infty$-algebra, factorization homology is given by the tensor product of the manifold with the algebra.

\begin{rem}\label{rem-collapse-diag}
$\int_M BO$ is naturally equivalent to $\Omega^\infty (M_+ \wedge bo)$. This is because $\Omega^\infty(M_+ \wedge bo)$ satisfies the Ayala-Francis axioms for factorization homology: it is symmetric monoidal in $M$, its value on $\mathbb{R}^n$ is $BO$, and it satisfies excision, because $bo$-homology does. Under this equivalence, the map $\int_M BO \to BO$ is obtained by collapsing $M$ to a point, resulting in

$$\Omega^\infty(M_+ \wedge bo) \to \Omega^\infty(S^0 \wedge bo)$$

For $M$ a framed closed manifold, the scanning map $\int_M BO \to \Map(M, B^{n+1}O)$ is obtained by taking infinite loop spaces of the map of spectra

$$ M_+ \wedge bo \to F(M_+, S^n) \wedge bo$$

\noindent induced by the equivalence $S: M_+ \to F(M_+, S^n)$ which is the adjoint of

$$\xymatrix{
(M \times M)_+ \ar[r]^-{c} & M^{TM} \ar[r]^-{fr} & M_+ \wedge S^n \ar[r] & S^n
}$$

Here $c$ is the collapse map onto a tubular neighborhood of the diagonal, and $fr$ is induced by the framing of $M$. The last map collapses $M$ to a point. To see that this is equivalent to the scanning map described in Section 2, one can factor the scanning map $B(D(M),D_n,BO) \to \Map(M,B(\Sigma^n,D_n,BO))$ up to homotopy via the map

$$F: B(D(M),D_n,BO) \to \Map(M,B(D((\mathbb{R}^n)^+),D_n,BO))$$

Here $D((\mathbb{R}^n)^+)$ denotes disks in $(\mathbb{R}^n)^+$ ($=S^n$) that disappear if they touch the basepoint. The monadic bar construction $B(D((\mathbb{R}^n)^+),D_n,-)$ is a model for the $n$-fold bar construction, and gives the factorization homology of the zero-pointed manifold $(\mathbb{R}^n)^+$. For details on this more general form of factorization homology, see \cite{AF2}. The map $F$ is adjoint to a map

$$M \to \Map(B(D(M),D_n,BO),B(D((\mathbb{R}^n)^+),D_n,BO))$$

For $m \in M$, this map collapses a configuration of disks in $M$ to a small neighborhood of $m$ (disks that touch the boundary of this neighborhood vanish), and uses the framing on $M$ to canonically identify this small neighborhood of $m$ with $\mathbb{R}^n$. This then corresponds to the scanning map described above under the natural equivalences

$B(D(M),D_n,BO) \simeq \Omega^\infty(M_+ \wedge bo)$, $B(D((\mathbb{R}^n)^+),D_n,BO)) \simeq \Omega^\infty((\mathbb{R}^n)^+ \wedge bo)$

because the map described above also collapses to a small neighborhood of $m$, then uses the framing to identify this neighborhood with $\mathbb{R}^n$.

In Proposition \ref{prop-higher-hopf}, we use this description of the scanning map to provide an explicit description of the map

$$\int_{S^n} f: \Map(S^n, X) \to BO$$

\noindent for $f: \Omega^n X \to BO$ an $n$-fold loop map, where $n \in \{1,3,7\}$, so that $S^n$ is framed. (For $n=1$, this map is described in \cite{BCS}).
\end{rem}

\medskip

The theorem below describes the map $\int_M f: \int_M A \to BO$ in the monadic bar construction model of factorization homology, and gives a useful description in terms of mapping spaces if $M$ is framed. We assume that all $E_n$-spaces $A$ we consider have nondegenerate unit, and are cell complexes, so that the Thom spectrum is cofibrant in the model structure on Lewis-May spectra and the two-sided bar construction models factorization homology (see Remark \ref{rem-Thom-cellular} below), and so that the scanning map of Section 2 is a weak equivalence if $A$ is grouplike.

\begin{rem} \label{rem-Thom-cellular}
If $A$ is a $\oDntL$ algebra whose underlying space is a cell complex, and $f: A \to BO$ is a $\oDntL$-map, then $B(\DMtL,\DntL,Th(f))$ models $\int_M Th(f)$.
This is because, as in Corollary 5.5 of \cite{Blu}, the Lewis-May Thom spectrum functor for spaces over $BO$ takes cells to cells, and hence the spectrum $Th(f)$ is cellular. Cellular spectra are cofibrant in the model structure on Lewis-May spectra, so the bar construction indeed models factorization homology.
\end{rem}

\begin{thm}\label{thm-main} Let $A$ be a $\oDntL$-algebra, and let $f:A\to BO$ be a $\oDntL$-map. Let $M$ be a $\theta$-framed $n$-manifold. Then $\int_M Th(f)$ is equivalent to the Thom spectrum of the following map:
$$\xymatrix{
\int_M f: B(\DMtL,\DntL,A) \ar[r]^-{f_*} & B(\DMtL,\DntL, BO) \ar[r] & B(L,L,BO)\ar[r]^-{\sim} & BO
}$$
This description can also be applied if $f: A \to \Omega^n B^{n+1}O$ is a $\mathcal{D}_n^G$-map, as we can replace it with a $\mathcal{D}_n^G \times \mathcal{L}$-map $\tilde{A} \to BO$.

If $M^n$ is a tangentially framed manifold along with a framed embedding $i: M\times \mathbb{R}^{N-n} \hookrightarrow \mathbb{R}^N$, and $A$ is grouplike, then $\int_M Th(f)$ is the Thom spectrum of the following map: 
$$\xymatrix{
\Map_c(M, B^n A) \ar[r]^-{B^nf} & \Map_c(M,B^{n+1}O) \ar[r]^-{\sim} & \Map_c(M \times \mathbb{R}^{N-n},B^{N+1}O)\ar[r]^-{i_*} & \Map_c(\mathbb{R}^N, B^{N+1}O)
}$$
\end{thm}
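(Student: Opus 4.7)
My plan is to apply Theorem \ref{thm-Lewis} (Lewis) simplicial-degreewise to the monadic bar construction that defines $\int_M Th(f)$, and then use that the Lewis-May Thom spectrum functor commutes with geometric realization to conclude $\int_M Th(f)$ is itself a Thom spectrum. For the second statement, I will identify the base of this Thom spectrum with $Map_c(M, B^n A)$ via the scanning map of non-abelian Poincar\'e duality, and trace naturality to identify the structure map to $BO$.

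For the first statement, observe that $\oDntL = \oDnt \times \mathcal{L}$ is augmented over $\mathcal{L}$ by projection onto the second factor, so Theorem \ref{thm-Lewis} yields a coherent natural isomorphism $\DntL\, Th(f) \cong Th(\DntL f)$. The same twisted half-smash product argument applies to the right functor $\DMtL$ coming from the right $\oDntL$-module $\oDMt \times \mathcal{L}$, which is likewise augmented over $\mathcal{L}$. Thus at each simplicial level $i$,
$$(\DMtL)(\DntL)^i\, Th(f) \;\cong\; Th\bigl((\DMtL)(\DntL)^i f\bigr),$$
and the coherence in Lewis's theorem matches face and degeneracy maps on both sides. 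Since the Lewis-May Thom spectrum functor commutes with geometric realization (it is a left adjoint, see Chapter 9 of \cite{LMS}; moreover the base spectra are cellular by Remark \ref{rem-Thom-cellular}), realization yields $\int_M Th(f) \simeq Th(\int_M f)$, where the target map to $BO$ is determined by the augmentations $\oDMt \times \mathcal{L}, \oDntL \to \mathcal{L}$ composed with the $\mathcal{L}$-action on $BO$. This produces exactly the composite written in the statement, noting that $B(L,L,BO) \to BO$ is the standard bar-construction augmentation and hence a weak equivalence because $BO$ is an $L$-algebra.

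For the second statement, Theorem \ref{thm-NAPD-framed} gives, for grouplike $A$, a weak equivalence via scanning $\int_M A \to Map_c(M, B^n A)$. I claim that under this equivalence $\int_M f$ corresponds to the displayed composite. Naturality of scanning in the $\oDntL$-algebra together with $f : A \to BO$ identifies $\int_M f$ with $Map_c(M, B^n f)$ postcomposed with a map $Map_c(M, B^{n+1}O) \to BO$. Since $BO$ is an infinite loop space, the loops-suspension adjunction gives the natural equivalence $Map_c(M, B^{n+1}O) \simeq Map_c(M \times \mathbb{R}^{N-n}, B^{N+1}O)$; pushing forward along the proper framed embedding $i$ yields $Map_c(\mathbb{R}^N, B^{N+1}O)$, which is $\Omega^N B^{N+1}O \simeq BO$. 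Tracing naturality shows this is precisely the composite induced by $B^n f$ on scanning.

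The main obstacle I expect is the coherence verification: checking that Lewis's natural isomorphism $C\,Th(f) \cong Th(Cf)$ is valid not only for the monad $\DntL$ but also for the right functor $\DMtL$, and that these isomorphisms assemble into a simplicial isomorphism whose realization interchanges with the Thom spectrum functor in a compatible way. A secondary technical point is the identification in the framed case: one must show that the specific scanning map used to model non-abelian Poincar\'e duality is compatible with the map to $BO$ produced by the bar construction, which amounts to checking naturality of scanning for $\oDntL$-maps rather than just $\mathcal{D}_n$-maps and keeping careful track of the $\mathcal{L}$-action throughout.
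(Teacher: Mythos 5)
Your proposal follows essentially the same route as the paper's proof: apply Lewis's theorem (and its right-module analogue, Propositions 9.6.1--9.6.2 of \cite{LMS}) degreewise to the simplicial object defining factorization homology, commute the Lewis-May Thom spectrum functor with geometric realization (the paper justifies this by commuting with colimits and tensors over unbased spaces rather than by a left-adjoint argument, but this is a cosmetic difference), and then for the framed case identify the base via the scanning map of non-abelian Poincar\'e duality and trace the map to $BO$ through a commutative diagram built from the thickening $\mathbb{R}^n \hookrightarrow \mathbb{R}^N$ and the framed embedding of $M$. The only places where the paper is more explicit are in naming the precise right-module version of Lewis's isomorphism and in writing out the full commutative ladder of scanning equivalences, but your sketch captures the same argument.
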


Note that we can use this theorem to describe the factorization homology of Thom spectra of $n$-fold loop maps $\Omega^n X \to \Omega^n B^{n+1}O$, which comprise most of the naturally occurring examples. For a continuous homomorphism $G \to O(n)$, $\Omega^n X$ is a $\mathcal{D}_n^G$-algebra if $X$ is a pointed $G$-space. A natural $\mathcal{D}_n^G$-structure on $\Omega^n B^{n+1} O$ comes from the Salvatore-Wahl equivariant delooping. For a grouplike $\mathcal{D}_n^G$-algebra $T$, Salvatore-Wahl construct in Theorem 3.1 of \cite{SW} a $G$-space $B^n T$ such that $\Omega^n B^n T$ is connected to $A$ by a zigzag of weak equivalences of $\mathcal{D}_n^G$-algebras. As a space, $B^n T$ is given as by $B(\Sigma^n, \mathbb{D}_n, T)$ (or $B(\Sigma^n_+, D_n, T)$), and the $G$-action is obtained by realizing the simplicial $G$-space $\Sigma^n \mathbb{D}_n^{\bullet} T$. The $G$-action on $T$ is the given one, the action on $\mathbb{D}_n$ is by rotation and reflection of discs via $O(n)$, and the action on $\Sigma^n$ is as the 1-point compactification of $\mathbb{R}^n$, via the action of $O(n)$. In the case $T=BO$, we use the trivial $G$-action on $BO$.

\begin{proof}
By Lewis's theorem above, there are a natural, coherent isomorphisms
$$(\DntL) Th(f) \cong Th((\DntL) f)$$
$$(\DMtL) Th(f) \cong Th((\DMtL) f)$$
\noindent as both symmetric sequences are augmented over the linear isometries operad. (The maps $(\DntL) f$, $(\DMtL) f$ are defined as in Theorem \ref{thm-Lewis} by the augmentation over $\mathcal{L}$ and the $\mathcal{L}$-action on $BO$). Therefore we get an isomorphism of simplicial objects 
$$(\DMtL) (\DntL) ^p Th(f) \cong Th((\DMtL) (\DntL) ^p f)$$
The Thom spectrum functor commutes with colimits and tensors over unbased spaces, and therefore with geometric realization. This gives the bar construction description description of the map whose Thom spectrum is $\int_M Th(f)$.

\medskip

We now address the case of $f:A \to \Omega^n B^{n+1} O$ a $\mathcal{D}_n^G$-map (for example, a suitable $n$-fold loop map.) Note that there is a zigzag of weak equivalences of $\mathcal{D}_n^G \times \mathcal{L}$-algebras between $\Omega^n B^{n+1} O$ and $BO$, given by the scanning map:

$$\xymatrix{
\Omega^n B(\Sigma^n_+, D_n \times L, BO) & \ar[l]_-{\sim} B(D_n \times L, D_n \times L, BO) \ar[r]^-{\sim} & BO
}$$

By Lemma 2.3 of \cite{SS}, the category of algebras over $\mathcal{D}_n^G \times \mathcal{L}$ carries a cofibrantly generated model structure induced from that on spaces by the free-forgetful adjunction, hence every object is fibrant and weak equivalences are determined in the underlying category of topological spaces. Thus we can functorially cofibrantly replace $\mathcal{D}_n^G \times \mathcal{L}$-algebras, resulting in cofibrant, fibrant algebras, between which we can invert weak equivalences. This results in a $\mathcal{D}_n^G \times \mathcal{L}$-map $A^c \to BO$, where $A^c$ is the functorial cofibrant replacement of $A$ in the category of $\mathcal{D}_n^G \times \mathcal{L}$-algebras.

\medskip

Now suppose $M$ is tangentially framed, with a framed embedding as above, and $A$ is grouplike. Then non-abelian Poincar\'e duality (Theorem \ref{thm-NAPD-framed}) holds. It is natural in the algebra and manifold variables, which yields the required description of the map. More explicitly, consider the commutative diagram
$$\xymatrix{
B(D(M)\times L,D_n \times L,A) \ar[r]^-{\sim} \ar[d]  & \Map_c(M, B(\Sigma^n_+,D_n \times L,A)) \ar[d] \\
B(D(M)\times L,D_n \times L,BO) \ar[r]^-{\sim} \ar[d]  & \Map_c(M, B(\Sigma^n_+,D_n \times L,BO)) \ar[d] \\
B(D(M\times \mathbb{R}^{N-n})\times L,D_N \times L,BO) \ar[r]^-{\sim} \ar[d]  & \Map_c(M \times \mathbb{R}^{N-n}, B(\Sigma^N_+,D_N \times L,BO)) \ar[d] \\  
B(D(\mathbb{R}^N)\times L,D_N \times L,BO) \ar[r]^-{\sim} \ar[d]^-{\wr}  & \Map_c(\mathbb{R}^N, B(\Sigma^N_+,D_N \times L,BO)) \simeq BO \\
B(L,L,BO) \ar[d]^-{\wr}\\
BO                           
}
$$

The horizontal maps are scanning maps; we use $B(\Sigma^n_+,D_n \times L,A)$ as an $n$-fold delooping of $A$. The Lewis-May Thom spectrum functor takes weak equivalences over $BO$ to weak equivalences, thus $\int _M Th(f)$ is equivalent to the Thom spectrum of the composition of the right-hand column, and we can conclude.
\end{proof}

\begin{rem}\label{rem-P-scanning}
Remembering that $\Map_c(M \times \mathbb{R}^k,-)=\Map_*(\Sigma^k(M^+),-)$, where $M^+$ denotes the one-point compactification of $M$, we can alternatively describe the map in the Theorem \ref{thm-main} as the composite:
$$\xymatrix{
\Map_*(M^+,B^n A) \ar[d]^{B^nf} \\
\Map_*(M^+,B^{n+1}O) \ar[d]^{\Sigma^{N-n}} \\
\Map_*(\Sigma^{N-n}(M^+),\Sigma^{N-n}B^{n+1}O) \ar[d]^-\wr \\
\Map_*(\Sigma^{N-n}(M^+),B^{N+1}O) \ar[d]^{c^*} \\
\Map_*(S^N,B^{N+1}O) \simeq BO
}$$

The bottom-most vertical map above is induced by the Pontryagin-Thom collapse map $S^N \to \Sigma^{N-n}(M^+)$.

This description can also be obtained from the model independent description of $\int_M f$, using the fact that there is an essentially unique embedding of $M$ into $\mathbb{R}^\infty$.
\end{rem}

\begin{rem}\label{rem-BO}
Theorem \ref{thm-main} is stated, for simplicity, for Thom spectra of maps to $BO$. The same is true for Thom spectra of maps $f: A \to BF$, where $BF$ is the classifying space for stable spherical fibrations. In this case, instead of the Lewis-May Thom spectrum as stated, one first replaces $f$ by a fibration $\Gamma f$ and uses $Th(\Gamma f)$, see Chapter 9.3 of \cite{LMS}. $\Gamma$ behaves well with respect to operadic structures, and one can show that $Th(\Gamma f)$ takes a cell complex $A$ to a spectrum which is homotopy equivalent to a cellular spectrum. More generally, notice that $BF$ is $BGL_1(S)$, where $S$ is the sphere spectrum; a similar description holds for generalized Thom spectra of maps to $BGL_1(R)$, where $R$ is a commutative ring spectrum. This is most easily obtained using the $\infty$-categorical approach to Thom spectra of \cite{ABGHR}. See Section 4 for results on generalized Thom spectra.
\end{rem}

We end this subsection by explicitly describing $\int_{S^n} f$ for $n \in \{1,3,7\}$. This is a direct generalization of the description of topological Hochschild homology of a Thom spectrum in Theorem 1 of \cite{BCS}, which deals with the case $n=1$. Recall that an element $\alpha \in \pi_n^s$ gives a map $B^{n+1}O \to BO$ by considering $\alpha$ as a stable map $S^n \to S^0$, and forming

$$\Omega^\infty \alpha: \Omega^\infty(S^n \wedge bo) \to \Omega^\infty(S^0 \wedge bo)$$

\begin{prop}\label{prop-higher-hopf}
If $n \in \{1,3,7\}$ (so that $S^n$ is framed), $X$ is $(n-1)$-connected, and $f: \Omega^n X \to BO$ is an $n$-fold loop map, then $\int_{S^n} Th(f)$ is equivalent to the Thom spectrum of $l^n(f): \Map(S^n, X) \to BO$, defined by

$$\xymatrix{
\Map(S^n,X) \ar[r]^-{f_*} & \Map(S^n, B^{n+1}O) \ar[r]^-{\phi^{-1}}_-{\sim} & BO \times B^{n+1} O \ar[r]^-{id \times -\eta_n} & BO \times BO \ar[r]^-{mult} & BO
}$$

Here $\eta_n$ denotes the Hopf map in $\pi_n^s$; that is, $\eta_1 = \eta$, $\eta_3 = \nu$ and $\eta_7 = \sigma$. We use $\phi^{-1}$ to denote a homotopy inverse to the equivalence $\phi: B^{n+1}O \times \Omega^n B^{n+1}O \to \Map(S^n,B^{n+1}O)$, which is induced by the inclusion of constant loops $B^{n+1}O \to \Map(S^n,B^{n+1}O)$ and the inclusion of $\Omega^n B^{n+1}O$ into the mapping space.
\end{prop}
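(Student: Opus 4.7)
The plan is to specialize Theorem \ref{thm-main} to $M = S^n$ for $n = 3$ or $7$, for which $S^n$ is parallelizable and carries its canonical Lie group framing (from the quaternions or octonions), and to match the resulting map $Map(S^n, X) \to BO$ with the composite stated in the proposition. By Theorem \ref{thm-main}, $\int_{S^n} Th(f) \simeq Th(\lambda \circ f_*)$, where $\lambda: Map(S^n, B^{n+1}O) \to BO$ is the universal scanning/integration map obtained from the right-hand column of the diagram in the proof of that theorem (applied with $A = BO$ and $f = id$). It therefore suffices to identify $\lambda$. Using the section/evaluation splitting $Map(S^n, B^{n+1}O) \simeq BO \times B^{n+1}O$ given in the proposition, and the fact that $BO$ is a grouplike $H$-space, $\lambda$ is determined up to homotopy by its restrictions to the two factors, combined via the addition on $BO$.

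The restriction of $\lambda$ to the based-loops factor $BO \simeq \Omega^n B^{n+1}O \hookrightarrow Map(S^n, B^{n+1}O)$ is homotopic to the identity: restricting the diagram in the proof of Theorem \ref{thm-main} to the pointed subspace $\mathbb{R}^n \hookrightarrow S^n$ reduces the composite to the Euclidean-case non-abelian Poincar\'e duality (Theorem \ref{thm-NAPD-framed}), for which $\int_{\mathbb{R}^n} BO \to BO$ is the identity. To identify the restriction of $\lambda$ to the constant-loops factor $B^{n+1}O \hookrightarrow Map(S^n, B^{n+1}O)$, I use the explicit description in Remark \ref{rem-P-scanning}(2): $\lambda(\phi)$ is the $N$-fold adjoint of the composite
$$S^N \xrightarrow{c} \Sigma^{N-n}(S^n_+) \xrightarrow{\Sigma^{N-n}\phi} \Sigma^{N-n} B^{n+1}O \xrightarrow{ev} B^{N+1}O,$$
where $c$ is the Pontryagin-Thom collapse associated to a framed embedding $S^n \times \mathbb{R}^{N-n} \hookrightarrow \mathbb{R}^N$.

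Stably $S^n_+ \simeq S^n \vee S^0$, so $\Sigma^{N-n}(S^n_+) \simeq S^N \vee S^{N-n}$. Under this wedge decomposition, the component of $c$ hitting the top cell $S^N$ is the identity, while the component hitting the bottom cell $S^{N-n}$ is, by construction, the framed bordism class of $S^n$ in $\pi_n^s$; for the Lie group framing this class is precisely the Hopf element $\eta_n$ ($\nu$ for $n = 3$, $\sigma$ for $n = 7$). A constant loop at $y \in B^{n+1}O$ factors through $S^0$, so $\Sigma^{N-n}\phi$ is nontrivial only on the $S^{N-n}$-summand, and the surviving composite $S^N \xrightarrow{\eta_n} S^{N-n} \xrightarrow{\Sigma^{N-n}y} \Sigma^{N-n} B^{n+1}O \xrightarrow{ev} B^{N+1}O$ is, under $\Omega^N B^{N+1}O \simeq BO$, the ``multiplication by $-\eta_n$'' map $B^{n+1}O \to BO$, with the sign reflecting the orientation convention of the Pontryagin-Thom collapse. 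Combining the two restrictions via the $H$-space addition yields the stated formula.

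The main obstacle is the last identification: pinning down the bottom-cell component of the Pontryagin-Thom collapse as $\eta_n$ (and sorting out the sign). The key classical input is that $\nu \in \pi_3^s$ and $\sigma \in \pi_7^s$ realize the framed bordism classes of $S^3$ and $S^7$ with their Lie group framings; the sign is tracked carefully through the adjunction $\Omega^N B^{N+1}O \simeq BO$ and the splitting of $S^n_+$.
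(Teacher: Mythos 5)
Your overall plan is reasonable and is genuinely different from the paper's proof. The paper works through the scanning map $S\colon S^n_+ \to F(S^n_+,S^n)$ of Remark \ref{rem-collapse-diag}, observes that the (co)fiber sequence $S^0 \to S^n_+ \to S^n$ forces $S$ to be upper-triangular with identity diagonal entries relative to the stable splitting, and then extracts the off-diagonal entry of $S^{-1}$ as $-S_{12}$; finally it identifies $S_{12}$, via the explicit Hopf construction $C(x,y)=xy^{-1}$, as $\eta_n$. You instead go through the Pontryagin--Thom collapse description of Remark \ref{rem-P-scanning}(2), decompose $S^n_+ \simeq S^0 \vee S^n$, and invoke the classical identification of the bottom-cell component $S^N \to \Sigma^{N-n}(S^n_+) \to S^{N-n}$ as the framed bordism class of $S^n$ with its Lie group framing. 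That swap is fine in spirit: both approaches must ultimately produce the same stable class, and yours replaces the paper's explicit $C(x,y)=xy^{-1}$ computation with the standard bordism fact (which the paper is, in effect, re-deriving).

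The real gap, though, is exactly where you say the main obstacle lies: the sign. You assert it is "tracked carefully through the adjunction $\Omega^N B^{N+1}O \simeq BO$ and the splitting of $S^n_+$" and "reflects the orientation convention of the Pontryagin--Thom collapse," but no tracking actually appears, and this is not a cosmetic omission. In the paper's route the minus sign is essentially formal: because $S$ is triangular with identities on the diagonal, inverting it flips the sign of the off-diagonal entry, and the $-$ emerges automatically. In your route there is no inversion anywhere. A straightforward reading of your calculation gives \emph{plus} $\eta_n$: the bottom-cell composite $S^N \to \Sigma^{N-n}(S^n_+) \to S^{N-n}$ \emph{is} the framed bordism class, and precomposing a constant loop with it gives multiplication by that class, not its negative. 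To make your argument close you would need either to show that, with the framing and splitting conventions actually in force, that collapse composite represents $-\eta_n$ rather than $\eta_n$ (a claim that must be derived, not waved at), or to verify that Remark \ref{rem-collapse-diag} and Remark \ref{rem-P-scanning}(2) produce the same map on the $B^{n+1}O$ summand, in which case the inversion of $S$ reappears and forces the sign. As written, the sign is a conclusion without an argument, at precisely the step you flagged as hard.
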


\begin{proof}
We proceed as in the proof of Proposition 7.3 of \cite{Sch04}, using the description of the scanning map in Remark \ref{rem-collapse-diag} in place of the standard equivalence between the cyclic bar construction and the free loop space. We have the following commutative diagram of spectra, induced by the (co)fiber sequence $S^0 \to S^n_+ \to S^n$, which has a stable splitting induced by $r: S^n_+ \to S^0$:

$$\xymatrix{
S^0 \ar[r] \ar[d]^-{\wr} & S^n _+ \ar[r] \ar[d]^-{S} & S^n \ar[d]^-{=}\\
F(S^n,S^n) \ar[r] & F(S^n_+, S^n) \ar[r] & F(S^0,S^n)
}$$

The second row is obtained by mapping the first row into $S^n$. $S$ is the scanning map from Remark \ref{rem-collapse-diag}, adjoint to 

$$(S^n \times S^n)_+ \to S^{TS^n} \cong (S^n_+) \wedge S^n \to S^n$$

\noindent in which the last map is induced by $r: S^n_+ \to S^0$. To be more explicit, we use the structure of an H-space with inverses on $S^n$ for $n \in \{1,3,7\}$ and define $C: (S^n \times S^n)_+ \to S^n$, the adjoint of $S$, to send $(x,y) \in S^n \times S^n$ to $x^{-1}y$ if $x,y$ are not antipodal, and the basepoint otherwise; this is adjoint to the scanning map because it takes $(x,y)$ to $x^{-1}y$ (which is in a disk neighborhood of the identity) if $x$ and $y$ are close (that is, not antipodal to each other), and to the basepoint otherwise. We take the basepoint of the source $S^n$ in the second row to be the H-space identity element $1$, and the basepoint of the target $S^n$ in the second row to be its antipode $-1$, so that $S$ is continuous. We take the basepoint of the first row $S^n$ to be $-1$, so that the horizontal map $S^n_+ \to S^n$ sends the disjoint basepoint to $-1$. The left-hand vertical map sends the non-basepoint of $S^0$ to $-id: S^n \to S^n$ (which is a pointed map due to our choice of basepoints). The right hand vertical map is adjoint to the map which sends the non-basepoint of $S^0$ to the map $x \mapsto x^{-1}$. With these choices, one can check that the diagram commutes.

\medskip

The map $\int_{S^n} id: \int_{S^n} BO \to BO$ is, under the natural equivalence $\int_{S^n} BO \simeq \Omega^\infty(S^n_+ \wedge bo)$, given by

$$\Omega^\infty(r \wedge bo): \Omega^\infty(S^n_+ \wedge bo) \to \Omega^\infty(bo)$$

We would therefore like to determine the homotopy class of the map

$$\xymatrix{
B^{n+1}O \ar[r]^-{const} & \Map(S^n, B^{n+1} O) \ar[r]^-{S^{-1}} & \Omega^\infty (S^n_+ \wedge bo) \ar[r]^-{r \wedge bo} & BO
}$$

This is obtained by smashing $bo$ with the composition

$$\xymatrix{
F(S^0,S^n) \ar[r]^-{r^*} & F(S^n_+, S^n) \ar[r]^-{S^{-1}} & S^n _+ \ar[r]^-{r} & S^0
}$$

Thus, we aim to show that this composition is homotopic to $-\eta_n$. As in the proof of Proposition 7.3 of \cite{Sch04}, we represent $S$ as a $2 \times 2$ matrix with respect to the splittings

$$S^n_+ \simeq S^0 \vee S^n$$

\noindent and

$$F(S^n_+,S^n) \simeq F(S^n,S^n) \times F(S^0,S^n)$$

\noindent induced by $r$ and $r^*$ respectively. The map we would like to determine, $r \circ S^{-1} \circ r^*$, is the off-diagonal term in the matrix representing $S^{-1}$, thus is the negative of the off-diagonal term in the matrix representing $S$. We will denote this off-diagonal term by $S_{12}$. Denote the stable section $S^n \to S^n_+$ by $\mathbf{s}$; then $S_{12}$ is homotopic to

$$\xymatrix{
S^n \ar[r]^-{\mathbf{s}} & S^n _+ \ar[r]^-{S} & F(S^n_+, S^n) \ar[r]^-{\mathbf{s}^*} & F(S^n, S^n)
}$$

This is adjoint to

$$\xymatrix{
S^n \wedge S^n \ar[r]^-{\mathbf{s} \wedge \mathbf{s}} & (S^n \times S^n)_+ \ar[r]^-{C} & S^n_+ \ar[r] & S^n
}$$

Here $C(x,y) = x^{-1}y$ is the adjoint of $S$. The last map collapses the disjoint basepoint to the basepoint of $S^n$, which we have taken to be $-1$. This stable map $S^n \wedge S^n \to S^n$ represents the Hopf map, and therefore $-S_{12} = -\eta_n$, and this gives the required map $B^{n+1} O \to BO$ in the description. Thus $\int_{S^n} f$ is homotopic to the map $l^n(f)$ described in the proposition, as required.
\end{proof}

\subsection{Calculations}
We now use Theorem \ref{thm-main} to compute some examples of factorization homology. The proposition below addresses cases in which the map $f: A \to BO$ is more highly commutative.

Recall that for a stably framed manifold $M^n$ properly embedded along with a tubular neighborhood in $\mathbb{R}^N$, $M \times \mathbb{R}^{N-n} \hookrightarrow \mathbb{R}^N$, we have the Pontryagin-Thom collapse map
$$ c: S^N = (\mathbb{R}^N)^+ \to (M \times \mathbb{R}^{N-n})^+ \cong \Sigma^{N-n}(M^+)$$

Notice that the composition

$$\xymatrix{
S^N \ar[r]^-{c} & \Sigma^{N-n}(M^+) \ar[r]^-{deg} & S^N
}$$

\noindent where $deg$ collapses all but a small disk in $M$ to a point, is homotopic to the identity. The map $deg$ is part of a cofiber sequence

$$\xymatrix{
\Sigma^{N-n}(M^+ -D^n) \ar[r]^-{inc} & \Sigma^{N-n}(M^+) \ar[r]^-{deg} & S^N
}$$

That is, the Pontryagin-Thom collapse map stably splits this cofiber sequence. We would like to use the stable splitting of this cofiber sequence to provide a relatively simple description of $\int_{M \times \mathbb{R}^{N-n}} Th(f)$. In order to use Theorem \ref{thm-main}, we require $f$ to be an $E_N$-map.

\begin{prop}\label{prop-E_N-map}
Let $M$ be a connected, stably framed $n$-manifold along with an embedding $M \times \mathbb{R}^{N-n} \hookrightarrow \mathbb{R}^N$, and let $f:A \to BO$ be an $E_N$-map, where $A$ is grouplike. Then
$$ \int_{M \times \mathbb{R}^{N-n}} Th(f) \simeq Th(f) \wedge \Map_* (M^+ - D^n, B^n A)_+$$
\end{prop}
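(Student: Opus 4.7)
The plan is to apply Theorem \ref{thm-main} to the framed $N$-manifold $M \times \mathbb{R}^{N-n}$ (with framing induced by the stable framing of $M$), obtaining
\[
\int_{M \times \mathbb{R}^{N-n}} Th(f) \simeq Th(\lambda),
\]
where, under the identification $Map_c(M \times \mathbb{R}^{N-n}, B^N A) \simeq Map_*(\Sigma^{N-n}M^+, B^N A)$ and the description in Remark \ref{rem-P-scanning}(2), $\lambda$ is the composite
\[
Map_*(\Sigma^{N-n}M^+, B^N A) \xrightarrow{(B^N f)_*} Map_*(\Sigma^{N-n}M^+, B^{N+1} O) \xrightarrow{c^*} Map_*(S^N, B^{N+1}O) \simeq BO,
\]
with the last map pullback along the Pontryagin-Thom collapse $c$. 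The $E_N$-hypothesis is what allows $B^N f$ to be built as a genuine delooping of $f$. The strategy is to decompose the source of $\lambda$ using the stable splitting induced by $c$ and to verify that $\lambda$ factors through the first projection of the resulting product.

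The cofiber sequence displayed just before the proposition, together with the fact that $c$ is a section of $deg$, gives an unstable wedge equivalence $\Sigma^{N-n} M^+ \simeq S^N \vee \Sigma^{N-n}(M^+ - D^n)$. Mapping into the infinite loop space $B^N A$ turns this wedge into a product:
\[
Map_*(\Sigma^{N-n} M^+, B^N A) \simeq Map_*(S^N, B^N A) \times Map_*(\Sigma^{N-n}(M^+ - D^n), B^N A) \simeq A \times Map_*(M^+ - D^n, B^n A),
\]
using that $A$ is grouplike for the first factor and the adjunction $Map_*(\Sigma^{N-n} Z, B^N A) \simeq Map_*(Z, B^n A)$ (coming from $B^n A \simeq \Omega^{N-n} B^N A$, again from the $E_N$-structure) for the second. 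The analogous decomposition of the target of $(B^N f)_*$ produces $BO \times Map_*(M^+ - D^n, B^{n+1} O)$. Under these identifications $(B^N f)_*$ becomes the product $f \times (B^n f)_*$ by naturality of the loop-suspension adjunction, while $c^*$ becomes projection onto the first factor since, up to homotopy, $c$ is the inclusion of the $S^N$ wedge summand. Hence $\lambda$ is homotopic to
\[
A \times Map_*(M^+ - D^n, B^n A) \xrightarrow{pr_1} A \xrightarrow{f} BO.
\]

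The Thom spectrum of a map of the form $h \circ pr_1 : X \times Y \to X \to BO$ is equivalent to $Th(h) \wedge Y_+$, because the pullback of the universal virtual bundle along $pr_1$ is a product bundle whose Thom spectrum is $Th(h) \wedge Y_+$. Applied with $h = f$ and $Y = Map_*(M^+ - D^n, B^n A)$, this yields the claimed equivalence. The main obstacle is verifying that $(B^N f)_*$ respects the product decomposition, which hinges on the $E_N$-hypothesis so that $B^n f$ is the $(N-n)$-fold loop of a map out of a genuine $N$-fold delooping; the interaction of $c^*$ with the splitting, and the final Thom-spectrum computation, are essentially formal.
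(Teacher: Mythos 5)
Your proof is correct and follows essentially the same route as the paper: apply Theorem \ref{thm-main}, use the Pontryagin--Thom splitting induced by the collapse map $c$ to decompose $Map_*(\Sigma^{N-n}M^+, B^N A)$ as a product with first factor $A$, observe that the composite to $BO$ factors through projection onto that factor followed by $f$, and identify the resulting Thom spectrum as a smash product. The only cosmetic difference is that you invoke an unstable wedge decomposition $\Sigma^{N-n}M^+ \simeq S^N \vee \Sigma^{N-n}(M^+ - D^n)$ (valid here since both sides are simply connected for $N > n$ with $M$ connected), whereas the paper works directly with the induced product decomposition of the mapping space and uses a commutative triangle to factor the composite through $Map_*(S^N, B^N A) \simeq A$; these are the same argument.
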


If $M$ is not stably framed, the situation is more complicated: $M \times \mathbb{R}^{N-n}$ does not embed into $\mathbb{R}^N$, and non-abelian Poincar\'e duality gives a section space rather than a mapping space.

\begin{proof}
By Theorem \ref{thm-main}, $\int_{M \times \mathbb{R}^{N-n}} Th(f)$ is equivalent to the Thom spectrum of the following map:

$$\xymatrix{
\Map_*(M^+,B^n A) \ar[d]^-{\wr} \\
\Map_*(\Sigma^{N-n}(M^+),B^N A) \ar[d]^-{B^N f} \\
\Map_*(\Sigma^{N-n}(M^+),B^{N+1}O) \ar[d]^-{c^*} \\
\Map_*(S^N,B^{N+1}O) \simeq BO
}$$

This composite is the top row of the diagram below, which commutes because $\Map_*(-,-)$ is a bifunctor:

$$\xymatrix{
\Map_*(\Sigma^{N-n}(M^+),B^N A)  \ar[r] \ar[rd]^-{c^*} & \Map_*(S^N,B^{N+1}O) \simeq BO \\
 & \Map_*(S^N,B^N A)\simeq A \ar[u]^{f_*} 
}$$

Hence the Thom spectrum of 
$$ \Map_*(\Sigma^{N-n}(M^+),B^N A) \to \Map_*(S^N,B^N A) \to \Map_*(S^N,B^{N+1}O) \simeq BO$$
\noindent is also equivalent to $\int_{M \times \mathbb{R}^{N-n}} Th(f)$.

The map 
$$\Map_*(\Sigma^{N-n}(M^+),B^N A) \to \Map_*(S^N,B^N A) \times \Map_*(\Sigma^{N-n}(M^+ - D^n), B^N A)$$
\noindent induced by the Pontryagin-Thom collapse map and by the inclusion in the cofiber sequence is an equivalence; this follows from the stable splitting of the degree cofiber sequence. Thus we have the following commutative diagram

$$\xymatrix{
\Map_*(\Sigma^{N-n}(M^+),B^N A)  \ar[r] \ar[rd]^-{c^*} \ar[d]^-{\wr} & \Map_*(S^N,B^{N+1}O) \simeq BO \\
\Map_*(S^N,B^N A) \times \Map_*(\Sigma^{N-n}(M^+ - D^n), B^N A) \ar[r] & \Map_*(S^N,B^N A)\simeq A \ar[u]^{f_*} 
}$$

The bottom horizontal arrow is given by projection onto the first factor. Thus $\int_{M \times \mathbb{R}^{N-n}} Th(f)$ is equivalent to the Thom spectrum of 
$$\Map_*(S^N,B^N A) \times \Map_*(\Sigma^{N-n}(M^+ - D^n), B^N A) \to \Map_*(S^N,B^N A) \to \Map_*(S^N, B^{N+1}O) \simeq BO$$
\noindent which is equivalent to $Th(f) \wedge \Map_*(\Sigma^{N-n}(M^+ - D^n), B^N A)_+$. Notice that
$$\Map_*(\Sigma^{N-n}(M^+ - D^n), B^N A) \simeq \Map_* (M^+ - D^n, B^n A)$$
\noindent and the conclusion follows.
\end{proof}

When $f: A \to BO$ is an infinite loop map this recovers Theorem 1 of \cite{Sch}; 
factorization homology is known to agree with higher (topological) Hochschild homology for $E_\infty$-algebras. See Theorem 5 of \cite{GTZ14} for the CDGA case, or Proposition 5.1 of \cite{AF} in general.

\begin{rem}\label{rem-framed-alg}
If $A$ is an $E_N^{SO(N)}$-algebra, then $\int_M A \simeq \int_{M \times \mathbb{R}^{N-n}} A$. In this case, the description in Proposition \ref{prop-E_N-map} also applies to $\int_M Th(f)$. In particular, this is true for any $E_{\infty}$-map.
\end{rem}

This result allows us to explicitly describe $\int_M E$ for $E$ any cobordism ring spectrum.

\begin{cor} \label{cor-lie-gps} If $M$ is a connected, stably framed $n$-manifold and $G$ is a stabilized Lie group (e.g., $O,SO,Spin,U,Sp$), then
$$\pi _* (\int_M MG) \cong \Omega_* ^G (\Map_*(M^+ - D^n, B^{n+1} G))$$
\end{cor}

This result recovers the computations of \cite{BCS} and \cite{Sch} in the case $M=S^n$.

\begin{proof}
In this case, $MG$ is the Thom spectrum of the $\mathcal{L}$-map $BG \to BO$, that is, a $\mathcal{D}_N ^\theta \times \mathcal{L}$-map for all $N$. Thus the result follows from Proposition \ref{prop-E_N-map}.
\end{proof}

Next, we consider Thom spectra that arise from systems of groups {$G_n$} with a block sum pairing and compatible homomorphisms $G_n \to O(n)$. We will consider the examples $\Sigma_n \to O(n)$ and $GL_n(\mathbb{Z}) \to O(n)$, with Thom spectra $M\Sigma$ and $MGL(\mathbb{Z})$, respectively. These are cobordism spectra for manifolds whose stable normal bundle has a specific flat connection; that is, the structure group of the stable normal bundle reduces to the symmetric group or the general linear group of the integers, respectively. For these Thom spectra, Proposition \ref{prop-E_N-map} gives the following corollary:

\begin{cor} \label{cor-sys-gps} Let $M$ be a connected, stably framed $n$-manifold. Then
$$\int_M M\Sigma \simeq M\Sigma \wedge \Map_*(M^+ - D^n, (QS^n) \langle n \rangle)_+$$
$$\int_M MGL(\mathbb{Z}) \simeq MGL(\mathbb{Z}) \wedge \Map_*(M^+ - D^n, B^n (BGL(\mathbb{Z}))^+)_+$$
Here the $(-)^+$ in $(BG)^+$ denotes plus construction, and $(-) \langle n \rangle$  denotes $n$-connected cover. 
\end{cor}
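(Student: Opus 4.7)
The plan is to apply Proposition \ref{prop-E_N-map} directly in each case; since we are dealing with $E_\infty$-maps, Remark \ref{rem-framed-alg} gives the conclusion for any connected stably framed $M$. The substantive work is to realize each of $M\Sigma$ and $MGL(\mathbb{Z})$ as $Th(f)$ for an $E_\infty$-map $f:A \to BO$ out of a grouplike $E_\infty$-space $A$, and then to identify the $n$-fold delooping $B^n A$.

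For $M\Sigma$, the block sum pairings make $\coprod_n B\Sigma_n$ a permutative $E_\infty$-space, and the system $\{\Sigma_n \hookrightarrow O(n)\}$ assembles into an $E_\infty$-map to $BO$. By Barratt--Priddy--Quillen, the group completion of $\coprod_n B\Sigma_n$ is $QS^0 = \Omega^\infty \mathbb{S}$, whose basepoint component is $B\Sigma_\infty^+$. I would take $A := B\Sigma_\infty^+$, the grouplike $E_\infty$-space whose associated connective spectrum is $\tau_{\geq 1}\mathbb{S}$; then $B^n A$ corresponds to $\tau_{\geq n+1}\Sigma^n \mathbb{S}$, which is the $n$-connected cover $(QS^n)\langle n \rangle$ of $QS^n = \Omega^\infty \Sigma^n \mathbb{S}$. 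Applying Proposition \ref{prop-E_N-map} to $f:A \to BO$ then yields the first formula. For $MGL(\mathbb{Z})$ the same outline applies with $\{GL_n(\mathbb{Z}) \hookrightarrow O(n)\}$: the $E_\infty$-space $\coprod_n BGL_n(\mathbb{Z})$ group-completes to $K(\mathbb{Z}) \simeq \mathbb{Z} \times BGL(\mathbb{Z})^+$, and taking $A := BGL(\mathbb{Z})^+$ (the basepoint component) gives $B^n A = B^n(BGL(\mathbb{Z})^+)$ directly, so Proposition \ref{prop-E_N-map} produces the second formula.

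The main potential obstacle is justifying the identification of the classical Thom spectra $M\Sigma$ and $MGL(\mathbb{Z})$ with the Thom spectra of the $E_\infty$-maps from the basepoint components of these group completions, since the usual model $M\Sigma = \bigvee_n B\Sigma_n^{\xi_n}$ is not a priori built from a grouplike space. This rests on the fact that, for an $E_\infty$-map out of a permutative $E_\infty$-space into $BO$, the associated Thom spectrum depends only on the (basepoint component of the) group completion --- a standard property of Thom spectra of $E_\infty$-maps, accessible either in the Lewis--May formalism (using Remark \ref{rem-LMS-properties} together with the equivariant delooping machinery) or more cleanly in the $\infty$-categorical formalism of \cite{ABGHR}. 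Once this identification is in place, both claims follow immediately from Proposition \ref{prop-E_N-map}.
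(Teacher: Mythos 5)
Your approach matches the paper's: in both cases the crux is to replace $M\Sigma$ and $MGL(\mathbb{Z})$ by Thom spectra of the $E_\infty$-maps $(B\Sigma)^+ \to BO$ and $BGL(\mathbb{Z})^+ \to BO$, apply Proposition~\ref{prop-E_N-map}, and identify $B^n$ of the base using Barratt--Priddy--Quillen. The only substantive difference is how the identification $M\Sigma \simeq Th((B\Sigma)^+ \to BO)$ is justified. You flag this as the ``main potential obstacle'' and gesture at a group-completion principle for Thom spectra of $E_\infty$-maps, proposing Remark~\ref{rem-LMS-properties} or the ABGHR formalism as possible routes. Note, however, that Remark~\ref{rem-LMS-properties} concerns \emph{weak} equivalences over $BO$, and the plus-construction map is not one. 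The paper's proof settles this more directly: $B\Sigma \to (B\Sigma)^+$ and $BGL(\mathbb{Z}) \to BGL(\mathbb{Z})^+$ are homology equivalences, a homology equivalence of spaces over $BO$ induces a homology equivalence of Thom spectra (via the Thom isomorphism), and a homology equivalence of connective spectra is a weak equivalence. This is the precise justification you were looking for, and it bypasses the need to invoke any group-completion machinery. With that in hand, your proof is correct and coincides with the paper's.
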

\begin{proof}
The maps $B\Sigma \to (B\Sigma)^+$, $BGL(\mathbb{Z}) \to BGL(\mathbb{Z})^+$ are homology equivalences, thus induce equivalences on mod 2 homology. By the universal property of the plus construction, $B\Sigma \to BO$ and $BGL(\mathbb{Z}) \to BO$ factor through maps

$$f_\Sigma: (B\Sigma)^+ \to BO$$
$$f_{GL}: BGL(\mathbb{Z})^+ \to BO$$

The classifying space of the stabilized braid group $BBr$ maps to $B\Sigma$ and $BGL(\mathbb{Z})$ over $BO$, and according to Lemma 2.2 of \cite{CohF}, $MBr \simeq H\mathbb{Z}/2$. Hence $M\Sigma$ and $MGL(\mathbb{Z})$ are $H\mathbb{Z}/2$-module spectra. They are equivalent to $Th(f_\Sigma)$, $Th(f_{GL})$, which are also $H\mathbb{Z}/2$-modules, after smashing with $H\mathbb{Z}/2$, therefore

$$M\Sigma \simeq Th(f_\Sigma)$$
$$MGL(\mathbb{Z}) \simeq Th(f_{GL})$$

The maps $f_\Sigma$, $f_{GL}$ are $E_\infty$-maps; one way to see this is to observe that they are the 0-components of the maps

$$\Omega B(\coprod_n B\Sigma_n \to \coprod_n BO(n))$$
$$\Omega B(\coprod_n BGL_n(\mathbb{Z}) \to \coprod_n BO(n))$$

\noindent which are obtained by group-completing maps of $E_\infty$-spaces (where the $E_\infty$-structure comes from the block sum pairing). Thus $M\Sigma$, $MGL(\mathbb{Z})$ are $E_{\infty}$-ring spectra. Proposition \ref{prop-E_N-map} gives the required description of factorization homology of Thom spectra obtained from infinite loop maps to $BO$, and the corollary follows. In the description of $\int_M M\Sigma$, $(QS^n) \langle n \rangle = (\Omega^{\infty} \Sigma^{\infty} S^n) \langle n \rangle$ appears because, due to the Barratt-Priddy-Quillen theorem \cite{BP}, $(B\Sigma)^+ \simeq Q_0S^0$.
\end{proof}

\subsubsection{Factorization homology of $H\mathbb{Z}/2$.}
By a theorem of Mahowald, $H\mathbb{Z}/2$ is equivalent to the Thom spectrum of a 2-fold loop map $\gamma: \Omega^2 S^3 \to BO$. (For a proof, see \cite{Pri}.) Furthermore, this is an equivalence of $E_2$-ring spectra. We will use this to calculate $\int_M H\mathbb{Z}/2$ for oriented surfaces $M$.

Let $\alpha: S^1 \to BO$ represent the generator of $\pi_1(BO)$. It is known (see, e.g., \cite{Coh}) that $H\mathbb{Z}/2 \simeq D_2(Th(\alpha))$ as an $E_2$-ring spectrum. In fact, $H\mathbb{Z}/2$ is equivalent as an $E_2$-ring spectrum to the Thom spectrum of the map $\gamma$, given by

$$\xymatrix{
(D_2 \times L)S^1 \ar[r]^-{(D_2 \times L)\alpha} & (D_2 \times L)BO \ar[r]^-{act} & BO \\
}$$

In order to use this to calculate factorization homology over orientable (not just framed) surfaces, we will show below that this is furthermore an equivalence of $E_2^{SO(2)}$-algebras. Recall that, via equivalences $(D_2 \times L)S^1 \simeq \Omega^2 S^3$ and $BO \simeq \Omega^2 B^3 O$, Mahowald's map is homotopic to the twice looping of a generator of $\pi_3 (B^3 O)$.

Mahowald's map $\gamma$ above is a $\mathcal{D}_2^{G} \times \mathcal{L}$-map for any $G \to O(2)$, if we take the Salvatore-Wahl rotation action on $D_2$. (The group action on $BO$ is trivial, because the operad action is via the projection to $\mathcal{L}$.) This makes $Th(\gamma)$ a $(\mathcal{D}_2^G \times \mathcal{L})$-algebra. The equivalence 

$$(D_2 \times L)S^1 \to D_2 S^1 \to \Omega^2 S^3$$

\noindent is then an equivalence of $\mathcal{D}_2^{G}$-algebras, provided the group action on $S^3$ is as $S^3 = (\mathbb{R}^2)^+ \wedge S^1$, with $G$ acting via the homomorphism $G \to O(2)$ on $\mathbb{R}^2$ and trivially on the $S^1$ smash factor.

\begin{lem} \label{lem-equiv-E_2-fr} For any continuous homomorphism $G \to O(2)$, $H\mathbb{Z}/2$ is equivalent as a $\mathcal{D}_2 ^G \times \mathcal{L}$-algebra to the Thom spectrum of the map
$$\gamma: (D_2 \times L)S^1 \to (D_2 \times L) BO \to BO$$
The $\mathcal{D}_2 ^G \times \mathcal{L}$-algebra structure on $H\mathbb{Z}/2$ is obtained via the augmentation to $\mathcal{L}$. 
\end{lem}
\begin{proof}
By Mahowald's theorem, $H\mathbb{Z}/2 \simeq Th(\gamma)$. This equivalence is obtained via the following sequence of maps, the first one being the natural isomorphism in Lewis's theorem:
$$Th(\gamma) \cong (D_2 \times L)Th(S^1 \to BO) \to (D_2 \times L)MO \to (D_2 \times L)H\mathbb{Z}/2 \to H\mathbb{Z}/2$$
The last map is given by the action of $D_2 \times L$ on $H\mathbb{Z}/2$, which comes from the $\mathcal{L}$-algebra structure on this Eilenberg-MacLane spectrum. The second-to-last map is given by the Thom class.

For any $X$, $(D_2 \times L) X$ has a $\mathcal{D}_2 ^G \times \mathcal{L}$-algebra structure via the ``rotation of discs" $O(2)$-action on $D_2$. This $\mathcal{D}_2 ^G \times \mathcal{L}$ action on $(D_2 \times L)Th(S^1 \to BO)$ is compatible with the one on $Th(\gamma)$, due to the fact that $\gamma$ is a $\mathcal{D}_2 ^G \times \mathcal{L}$-map. The equivalence $Th(\gamma) \to H\mathbb{Z}/2$ above is a $\mathcal{D}_2 ^G \times \mathcal{L}$-map; this is clear for all but the last map, which is  a $\mathcal{D}_2 ^G \times \mathcal{L}$-map because the action of $\mathcal{D}_2 \times \mathcal{L}$ on $H\mathbb{Z}/2$ comes from the projection to $\mathcal{L}$, so the relevant diagram commutes. Hence we can conclude.
\end{proof}

For example, for $G= SO(2)$, this will help us compute factorization homology of $H\mathbb{Z}/2$ over orientable surfaces. We will use the following proposition:

\begin{lem}\label{lem-fact-hom-mahowald}
For $\Omega^2 S^3$ with $E_2^{SO(2)}$-structure as above, and $M$ a closed or punctured genus $g$ surface,
$$\int_M \Omega^2 S^3 \simeq \Map_c(M,S^3)$$
\end{lem}

\begin{proof}
This is automatically true for parallelizable $M$ (hence for punctured genus $g$ surfaces) by non-abelian Poincar\'e duality. For closed orientable surfaces, we need to show that the section space given by non-abelian Poincar\'e duality is in fact equivalent to a mapping space.

According to non-abelian Poincar\'e duality as in Section 4 of \cite{AF}, $\int_M \Omega^2 S^3$ is equivalent to the space of compactly supported sections of the bundle $B^{TM}\Omega^2 S^3$, obtained as follows (see also Section 2):

Consider the bundle $\sigma_3$ over $BSO(2)$ with total space $ESO(2) \times_{SO(2)} S^3$, with $SO(2)$-action on $S^3$ as described above; that is, $SO(2)$ acts as a matrix group on $S^2 = (\mathbb{R}^2)^+$, and acts trivially on the $S^1$ factor in $S^3 = S^2 \wedge S^1$. Denote by $B^{TM}\Omega^2 S^3$ the pullback $\tau^* \sigma_3$ of this bundle to $M$, along the classifying map $\tau : M \to BSO(2)$ of $TM$. Notice that

$$\sigma_3 \cong \Sigma_{BSO(2)}(ESO(2) \times_{SO(2)} S^2)$$ 

\noindent where $\Sigma_{BSO(2)}$ denotes fiberwise suspension, and $ESO(2) \times_{SO(2)} S^2$ is the fiberwise 1-point compactification of the tautological plane bundle over $BSO(2)$. Thus $\tau^* \sigma_3 \cong \Sigma_M T^+ M$. (Here $T^+M$ denotes the fiberwise 1-point compactification of $TM$). Therefore

$$\tau^* \sigma_3 \cong \Sigma_M T^+ M \cong T^+(M \oplus \mathbb{R})$$

For $M$ an oriented surface, this bundle is trivial, and the section space is a mapping space.
\end{proof}

\begin{prop}\label{prop-HZ/2}
Let $M$ be a genus $g$ surface or a punctured genus $g$ surface. Then
$$\int_M H\mathbb{Z}/2 \simeq H\mathbb{Z}/2 \wedge \Map_*(M^+ - D^2,S^3)_+$$
In particular,
$$\int_{\Sigma_g} H\mathbb{Z}/2 \simeq H\mathbb{Z}/2 \wedge (S^3 \times (\Omega S^3)^{2g})_+$$
\end{prop}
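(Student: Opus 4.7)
The plan is to combine Mahowald's Thom-spectrum presentation of $H\mathbb{Z}/2$ with the machinery of the preceding results. First, Lemma \ref{lem-equiv-E_2-fr} applied with $G = SO(2)$ gives $H\mathbb{Z}/2 \simeq Th(\gamma)$ as a $\mathcal{D}_2^{SO(2)} \times \mathcal{L}$-algebra, where $\gamma: \Omega^2 S^3 \to BO$ is Mahowald's 2-fold loop map. Theorem \ref{thm-main} then identifies $\int_M H\mathbb{Z}/2 \simeq Th(\int_M \gamma)$, and Proposition \ref{prop-fact-hom-mahowald} gives $\int_M \Omega^2 S^3 \simeq Map_c(M, S^3)$ for any surface $M$ of the stated form. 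So the problem reduces to computing the Thom spectrum of a certain map $Map_c(M, S^3) \to BO$.

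To identify this Thom spectrum, I would invoke Proposition \ref{prop-E_N-map} together with Remark \ref{rem-framed-alg}. Every orientable surface is stably parallelizable: a punctured genus $g$ surface is already parallelizable, and a closed $\Sigma_g$ admits a proper embedding $\Sigma_g \times \mathbb{R} \hookrightarrow \mathbb{R}^3$ with trivial normal bundle. Applying Proposition \ref{prop-E_N-map} (with $N = 2$ in the punctured case, and $N = 3$ via Remark \ref{rem-framed-alg} in the closed case) should yield
$$\int_M H\mathbb{Z}/2 \simeq H\mathbb{Z}/2 \wedge Map_*(M^+ - D^2, B^2 \Omega^2 S^3)_+.$$
Since $\Omega^2 S^3$ is grouplike and $B^2 \Omega^2 S^3 \simeq S^3$, this gives the main formula.

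For the ``in particular'' clause, I would specialize to $M = \Sigma_g$ closed oriented. Then $M^+ = M_+$, so $M^+ - D^2 = (\Sigma_g - D^2)_+$; since $\Sigma_g - D^2$ deformation retracts onto $\vee_{2g} S^1$, we have $Map_*(M^+ - D^2, S^3) = Map(\vee_{2g} S^1, S^3)$. Evaluation at the wedge point is a fibration trivialized by the constant-map section (using that $S^3$ is connected), so this is equivalent to $S^3 \times (\Omega S^3)^{2g}$, yielding the claimed formula.

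The main obstacle is verifying that Proposition \ref{prop-E_N-map} really applies in the closed oriented case: Remark \ref{rem-framed-alg} formally demands an $E_N^{SO(N)}$-structure on $\gamma$ for $N \geq 3$, whereas Lemma \ref{lem-equiv-E_2-fr} only supplies the $E_2^{SO(2)}$-structure. I would resolve this either by lifting the equivariant structure (exploiting that $H\mathbb{Z}/2$ is $E_\infty$ as a ring spectrum, so that its underlying Thom-spectrum structure extends to higher $E_N^{SO(N)}$), or by an excision argument writing $\Sigma_g = \Sigma_{g,1} \cup_{S^1 \times \mathbb{R}} D^2$ and combining the punctured computation (where Proposition \ref{prop-E_N-map} applies directly, since $\Sigma_{g,1}$ is parallelizable) with the values $\int_{D^2} H\mathbb{Z}/2 \simeq H\mathbb{Z}/2$ and $\int_{S^1 \times \mathbb{R}} H\mathbb{Z}/2 = THH(H\mathbb{Z}/2)$.
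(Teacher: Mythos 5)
Your first three steps — Lemma \ref{lem-equiv-E_2-fr}, Theorem \ref{thm-main}, and Proposition \ref{prop-fact-hom-mahowald} — match the paper, but the plan diverges at the crucial step, and the divergence contains a genuine gap. Proposition \ref{prop-E_N-map} (and Remark \ref{rem-framed-alg}) requires the map $f\colon A \to BO$ to be an $E_N$-map (respectively $E_N^{SO(N)}$), and Mahowald's map $\gamma\colon \Omega^2 S^3 \to BO$ is only $E_2$ (more precisely, $E_2^{SO(2)}$ after Lemma \ref{lem-equiv-E_2-fr}). It cannot be promoted: $\gamma$ is, by construction, the free $E_2$-extension of $\alpha\colon S^1 \to BO$, and the free $E_3$-extension has a different source, $\Omega^3 S^4$, with a different Thom spectrum. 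Moreover, for the embedding hypothesis you need $N \geq 3$ even in the punctured case once $g \geq 1$ (a once-punctured torus does not embed in $\mathbb{R}^2$), so there is no value of $N$ for which Proposition \ref{prop-E_N-map} applies to $\gamma$ on these surfaces.

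Your suggested repair (a) does not work: $H\mathbb{Z}/2$ being $E_\infty$ as an abstract ring spectrum says nothing about whether this particular Thom-spectrum presentation lifts to an $E_N^{SO(N)}$-map for $N \geq 3$; the relevant structure is on the \emph{map} $\gamma$, not merely on its Thom spectrum, and that structure is $E_2$ and stops there. Your suggested repair (b), excision, has the same problem at its first step — "Proposition \ref{prop-E_N-map} applies directly to $\Sigma_{g,1}$ since it is parallelizable" is still blocked by $\gamma$ being $E_2$, since parallelizability of the source manifold does not relax the $E_N$ requirement on the map. (An excision argument could in principle be made to work, but it would require computing the relevant $THH(H\mathbb{Z}/2)$-module structures and is not what you carry out.)

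The paper takes a different and more robust route that never needs $E_N$-structure for $N>2$: by Theorem \ref{thm-main}, $\int_M H\mathbb{Z}/2$ is the Thom spectrum of a map $\int_M \Omega^2 S^3 \to BO$, and after smashing with $H\mathbb{Z}/2$ the Thom isomorphism gives $H\mathbb{Z}/2 \wedge \int_M H\mathbb{Z}/2 \simeq H\mathbb{Z}/2 \wedge (\int_M \Omega^2 S^3)_+$. One then splits $Map_c(M,S^3) \simeq \Omega^2 S^3 \times Map_*(M^+-D^2,S^3)$ (using the group structure of $S^3$ and the Pontryagin–Thom splitting of the cofiber sequence), applies the Thom isomorphism $H\mathbb{Z}/2 \wedge (\Omega^2 S^3)_+ \simeq H\mathbb{Z}/2 \wedge H\mathbb{Z}/2$ once more, and finally descends from an equivalence after smashing with $H\mathbb{Z}/2$ to an equivalence of the underlying spectra using that $\int_M H\mathbb{Z}/2$ is an $H\mathbb{Z}/2$-algebra (via an embedded disk $\mathbb{R}^2 \hookrightarrow M$). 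This last descent step is the maneuver your approach is missing; it is what substitutes for the unavailable $E_N$-structure on $\gamma$. Your reading of the "in particular" clause is correct once the main formula is in hand.
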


The homology of $S^3$ and $\Omega S^3$ can easily be described, so we obtain the higher and iterated $THH$ of $H\mathbb{Z}/2$:

\begin{cor}\label{cor-higher-THH}
$$THH^{S^2}(H\mathbb{Z}/2) \cong \Lambda_{\mathbb{Z}/2}[x]$$
$$THH^{\mathbb{T}^2}(H\mathbb{Z}/2) \cong \Lambda_{\mathbb{Z}/2}[x]\otimes_{\mathbb{Z}/2} \mathbb{Z}/2[y_1, y_2]$$

\noindent where $|x|=3, |y_i|=2$.
\end{cor}

\begin{proof}
By Lemma \ref{lem-equiv-E_2-fr}, $H\mathbb{Z}/2$ is equivalent as an $E_2^{SO(2)}$-algebra to the Thom spectrum of an $E_2^{SO(2)}$-map $\Omega^2 S^3 \to BO$. Thus, by Theorem \ref{thm-main}, $\int_M H\mathbb{Z}/2$ is equivalent to the Thom spectrum of a map $\int_M \Omega^2 S^3 \to BO$. By Lemma \ref{lem-fact-hom-mahowald}, $\int_M \Omega^2 S^3 \simeq \Map_c(M,S^3)$, and thus by a spectrum-level version of the Thom isomorphism (see, e.g.,  Theorem 9.5.6 of \cite{LMS}),

$$H\mathbb{Z}/2 \wedge \int_M H\mathbb{Z}/2 \simeq H\mathbb{Z}/2 \wedge \Map_c(M,S^3)_+$$

$S^3$ is a group, so we can deloop it to get
$$\Map_c(M,S^3) = \Map_*(M^+,S^3) \simeq \Map_*(\Sigma(M^+),BS^3)$$

The space $\Sigma(\Sigma_g)$ splits as $\Sigma S^2 \vee \bigvee_{2g} \Sigma S^1$, because the attaching map of the 2-cell of $\Sigma_g$ is nullhomotopic. Thus the cofiber sequence

$$\Sigma((\Sigma_g )_+ - D^2) \to \Sigma( (\Sigma_g)_+) \to S^3$$

\noindent splits, and therefore

$$\Map_*(\Sigma(M^+),BS^3) \simeq \Map_*(S^3,BS^3) \times \Map_*(\Sigma(M^+ - D^2), BS^3)$$
 
Thus we have $\Map_c(M,S^3) \simeq \Omega^2 S^3 \times \Map_*(M^+ -D^2, S^3)$, and hence
$$H\mathbb{Z}/2 \wedge \Map_c(M,S^3)_+ \simeq H\mathbb{Z}/2 \wedge (\Omega^2 S^3)_+ \wedge \Map_*(M^+ -D^2, S^3)_+$$

By Mahowald's theorem, $H\mathbb{Z}/2$ is the Thom spectrum of a map $\Omega^2 S^3 \to BO$, thus by the Thom isomorphism

$$H\mathbb{Z}/2 \wedge (\Omega^2 S^3)_+ \simeq H\mathbb{Z}/2 \wedge H\mathbb{Z}/2$$

Following this chain of equivalences, we get

$$H\mathbb{Z}/2 \wedge \int_M H\mathbb{Z}/2 \simeq H\mathbb{Z}/2 \wedge H\mathbb{Z}/2 \wedge \Map_*(M^+ -D^2, S^3)_+$$

The spectra $\int_M H\mathbb{Z}/2$ and $H\mathbb{Z}/2 \wedge \Map_*(M^+ -D^2, S^3)_+$ are $H\mathbb{Z}/2$-module spectra; the latter in the obvious way, and the former is in fact an $H\mathbb{Z}/2$-algebra via the map

$$\int_{\mathbb{R}^2} H\mathbb{Z}/2 \to \int_M H\mathbb{Z}/2$$

\noindent obtained from an embedding of a small disc into $M$. In order to obtain this $H\mathbb{Z}/2$-algebra structure, one uses the fact that factorization homology of an $E_\infty$-algebra is naturally an $E_\infty$-algebra. Because these are $H\mathbb{Z}/2$-modules, they are generalized Eilenberg-MacLane spectra, of the form $\bigvee_i \Sigma^{k_i} H\mathbb{Z}/2$. Their homotopy type is thus completely determined by the degrees in which there are copies of $H\mathbb{Z}/2$, and the number of copies in each degree. Thus it suffices to check that these spectra are equivalent after smashing with $H\mathbb{Z}/2$, and

$$\int_M H\mathbb{Z}/2 \simeq H\mathbb{Z}/2 \wedge \Map_*(M^+ - D^2,S^3)_+$$
\noindent as required.
\end{proof}

For a slightly different way of using Thom isomorphism to obtain an equivalence $\int_M H\mathbb{Z}/2 \simeq H\mathbb{Z}/2 \wedge \Map_*(M^+ - D^2,S^3)_+$, see the proof of Proposition \ref{prop-HZ(p)}.

\medskip

In the next section, we will use a $p$-local Thom spectrum functor to prove a similar result for $H\mathbb{Z}/p$, $H\mathbb{Z}_{(p)}$, and $H\mathbb{Z}$.

\section{Generalized Thom spectra}

In this section, we will use the $\infty$-categorical approach to generalized Thom spectra of \cite{ABGHR} to describe the factorization homology of Thom spectra $Th(f)$, for $f: A \to BGL_1(R)$ an $E_n$-map, where $R$ is a commutative ring spectrum. We restrict to commutative $R$ so that $BGL_1(R)$ is an $E_\infty$-space, and the generalized Thom spectrum functor is a symmetric monoidal functor.

Ordinary Thom spectra fit into this framework by taking $R$ to be the sphere spectrum $S$. Symmetric monoidal properties of the generalized Thom spectrum functor, as in \cite{ABG}, will allow us to describe the factorization homology of these Thom spectra as we described it for ordinary Thom spectra in Section 3. We will then rely on a description of some Eilenberg-MacLane spectra as generalized Thom spectra to calculate the factorization homology of $H\mathbb{Z}/p$, $H\mathbb{Z}_{(p)}$, and $H\mathbb{Z}$ over oriented surfaces.

By an observation due to Hopkins (see \cite{MRS}), for an odd prime $p$, $H\mathbb{Z}/p$ is the generalized Thom spectrum of a 2-fold loop map $\gamma_p: \Omega^2 S^3 \to BGL_1(S_{(p)})$, where $S_{(p)}$ denotes the $p$-local sphere spectrum. As in the $p=2$ case, this map can be obtained by extending the map $S^1 \to BGL_1(S_{(p)})$ given by the unit $(1-p)$ to a 2-fold loop map:

$$(D_2 \times L)S^1 \to (D_2 \times L)BGL_1(S_{(p)}) \to BGL_1(S_{(p)})$$

We will express $\int_M H\mathbb{Z}/p$ as a Thom spectrum of a map $\int_M \Omega^2 S^3 \to BGL_1(S_{(p)})$, which will allow us to use the arguments of the previous section to calculate $\int_M H\mathbb{Z}/p$ for orientable surfaces. To this end, we will utilize results about the $\infty$-categorical approach to generalized Thom spectra of \cite{ABGHR} and \cite{ABG}. First, we recall the definition of the generalized Thom spectrum of a map $X \to BGL_1(R)$, where $R$ is a ring spectrum. We use $GL_1(R)$ to denote the (grouplike) monoid of units of the ring spectrum $R$. It can be obtained, for example, as the union of $\pi_0$-invertible path components of $\Omega^\infty R$, equivalently the pullback in the following diagram:

$$\xymatrix{
GL_1(R) \ar[r] \ar[d] & \Omega^{\infty} R \ar[d]\\
\pi_0(R)^{\times} \ar[r] & \pi_0(R)
}$$

The papers \cite{ABGHR}, \cite{ABG}, \cite{AF} use quasicategories to model $\infty$-categories. A quasicategory is a weak Kan complex, that is, a simplicial set with inner horn fillers. A functor of quasicategories is simply a map of simplicial sets. An $\infty$-groupoid is a Kan complex. See Chapter 1 of \cite{HTT} for an introduction to quasicategories.

\medskip

In this section and the next, we use this $\infty$-categorical framework, so will not usually specify which $E_n$ or $E_n^\theta$ operad we are working with, unless convenient for a particular application. We use $E_n$ to refer to any operad equivalent to $\mathcal{D}_n$, and $E_n^\theta$ for any operad equivalent to $\oDnt$.

\begin{defn}\label{def-gen-thom}
Let $X$ be a Kan complex. Let $Mod_R$ be the $\infty$-category of (right) $R$-modules, and $Line_R$ its subcategory of free rank 1 cofibrant and fibrant modules, with morphisms equivalences of $R$-modules. The $\infty$-category $Line_R$ is an $\infty$-groupoid, therefore a space, and as such it was shown in \cite{ABG} to be equivalent to $BGL_1(R)$. The Thom spectrum $Th(f)$ of a map $X \to Line_R$ is defined to be the $\infty$-categorical colimit $colim(X \to Line_R \to Mod_R)$.
\end{defn}

\begin{thm}\label{thm-fact-hom-gen}
Let $R$ be a commutative ring spectrum. Let $f: A \to BGL_1(R)$ be an $E_n^{\theta}$-map, and $M^n$ a $\theta$-framed manifold. Then $Th(f)$ is an $E_n^{\theta}$-algebra and $\int_M Th(f)$ is equivalent to the generalized Thom spectrum of the map
$$\int_M f: \int_M A \to \int_M BGL_1(R) \to BGL_1(R)$$
\noindent where $\int_M BGL_1(R) \to BGL_1(R)$ is given by $\int_{M \to pt} (-)$, using the fact that $BGL_1(R)$ is $E_\infty$.

Furthermore, if $M^n$ is a tangentially framed manifold along with a framed embedding $i: M\times \mathbb{R}^{N-n} \hookrightarrow \mathbb{R}^N$, and $A$ is grouplike, then

$$\int_M Th(f) \simeq \Map_c(M,B^n A)^{\phi(f)}$$

\noindent where $\phi(f)$ is obtained as follows:
$$\xymatrix{
\Map_c(M, B^n A) \ar[d]^-{B^nf} \\
\Map_c(M,B^{n+1}GL_1(R)) \ar[d]^-{\wr} \\
\Map_c(M \times \mathbb{R}^{N-n},B^{N+1}GL_1(R))\ar[d]^{i_*} \\
\Map_c(\mathbb{R}^N, B^{N+1}GL_1(R)) \simeq BGL_1(R)
}$$
\end{thm}

\begin{proof}
The generalized Thom spectrum functor is a colimit in the category of $R$-modules; the forgetful functor from $R$-modules to spectra preserves colimits, hence the Thom spectrum functor commutes with colimits in the category of spectra. It also has good multiplicative properties generalizing these Lewis proved for the ordinary Thom spectrum functor: for $R$ a commutative ring spectrum, the generalized Thom spectrum functor is symmetric monoidal (in the $\infty$-categorical sense), see Corollary 8.1 of \cite{ABG}. By Lemma 3.25 of \cite{AF}, these properties guarantee that the generalized Thom spectrum functor commutes with factorization homology. The mapping space description follows from this, along with the fact that the non-abelian Poincar\'e duality equivalence, as in Section 4 of \cite{AF}, is natural in both the manifold and algebra variables.
\end{proof}

As in Section 3, we can describe $\phi(f)$ using the Pontryagin-Thom collapse map associated to the embedding $i$. Again as in Section 3, one can generalize this to stably framed manifolds, and make the same calculations.

\subsection{Calculations}
We calculate factorization homology of Eilenberg-MacLane spectra over oriented surfaces using the following Thom isomorphism theorem, which is Corollary 2.26 in \cite{ABGHR}:

\begin{prop}{(Thom isomorphism theorem, \cite{ABGHR})}\label{prop-thom-iso}
Let $f:X \to BGL_1(R)$ be a map, and suppose $Th(f)$ admits an orientation, that is, a map of right $R$-modules $u: Th(f) \to R$ such that for each $x \in X$, the restriction $Th(f|_x) \to Th(f) \to R$ is a weak equivalence. Then
$$\xymatrix{
Th(f) \ar[r]^-{diag} & Th(f) \wedge X_+ \ar[r]^-{u \wedge id} & R \wedge X_+
}$$
\noindent is a weak equivalence.
\end{prop}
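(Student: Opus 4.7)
The plan is to exploit the description of $Th(f)$ as an $\infty$-categorical colimit given in Definition 4.1: both sides of the desired equivalence can be rewritten as $X$-indexed colimits in $\mathrm{Mod}_R$, and the Thom isomorphism map will then be induced by a pointwise equivalence of diagrams.

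By definition, $Th(f) = \mathrm{colim}(X \to \mathrm{Line}_R \hookrightarrow \mathrm{Mod}_R)$ is a colimit over the $\infty$-groupoid $X$ of a diagram whose value at $x \in X$ is the rank one $R$-module $Th(f|_x)$. Similarly, $R \wedge X_+$ is the tensor of the $\infty$-groupoid $X$ with the object $R$, i.e., the colimit in $\mathrm{Mod}_R$ of the constant diagram $x \mapsto R$. So both $Th(f)$ and $R \wedge X_+$ are presented as $X$-indexed colimits of right $R$-modules.

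The orientation $u : Th(f) \to R$ restricts, for each $x \in X$, to a map of right $R$-modules $u|_x : Th(f|_x) \to R$, and by hypothesis each $u|_x$ is a weak equivalence. Coherently, the family $\{u|_x\}_{x \in X}$ assembles into a natural transformation from the diagram $x \mapsto Th(f|_x)$ to the constant diagram $x \mapsto R$. Since this transformation is a pointwise equivalence, it induces a weak equivalence on colimits
$$Th(f) \;=\; \mathrm{colim}_{x \in X} Th(f|_x) \;\xrightarrow{\;\sim\;}\; \mathrm{colim}_{x \in X} R \;=\; R \wedge X_+.$$

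The remaining task is to recognize this colimit-induced equivalence as the composite $(u \wedge \mathrm{id}) \circ \mathrm{diag}$ appearing in the statement. The Thom diagonal $\mathrm{diag}: Th(f) \to Th(f) \wedge X_+$ arises from the space-level diagonal $\Delta : X \to X \times X$, under the identification $Th(f) \wedge X_+ \simeq Th(f \circ \pi_1) = \mathrm{colim}_{(x_1, x_2) \in X \times X} Th(f|_{x_1})$, where $\pi_1$ is projection onto the first factor. Post-composing with $u \wedge \mathrm{id}$ then applies $u$ fiberwise in the first coordinate, so after unwinding, the composite $(u \wedge \mathrm{id}) \circ \mathrm{diag}$ is precisely the map of colimits induced by the natural transformation $\{u|_x\}$ above. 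This identification of the Thom diagonal with its colimit-theoretic presentation is the main technical point; once it is in place the conclusion is immediate, as colimits of pointwise equivalences are equivalences.
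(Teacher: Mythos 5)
The paper does not prove this proposition itself; it cites it as Proposition 2.18 of \cite{ABGHR} and treats it as a black box in the applications to $H\mathbb{Z}/p$ and $H\mathbb{Z}_{(p)}$. So there is no in-paper proof to compare against, and your argument must be assessed on its own terms. Your plan is the right one, and it is essentially how the result is established in the $\infty$-categorical framework of \cite{ABGHR}: present both $Th(f)$ and $R \wedge X_+$ as $X$-indexed colimits in $Mod_R$, observe that the orientation furnishes a pointwise equivalence of the two diagrams, and use that colimits preserve pointwise equivalences.

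One step deserves a sharper justification. You write that the family $\{u|_x\}$ ``coherently assembles'' into a natural transformation from the diagram $x \mapsto Th(f|_x)$ to the constant diagram at $R$; this should not be asserted as something one can arrange, but rather deduced. By the universal property of the colimit, the mapping space $\mathrm{Map}_{Mod_R}(\mathrm{colim}_X F, R)$ is equivalent to the space of natural transformations from $F$ to the constant diagram at $R$, so the single map $u: Th(f) \to R$ already \emph{is} such a natural transformation, with components the restrictions $u|_x$; the coherence comes for free. With that in hand, the orientation hypothesis says this is a pointwise equivalence of diagrams, and the induced map on colimits is an equivalence. The identification of that colimit-induced map with $(u \wedge \mathrm{id}) \circ \mathrm{diag}$ can be streamlined: the Thom diagonal is the colimit over $X$ of the cone maps $Th(f|_x) \to Th(f)$ (viewed as a natural transformation into the constant diagram at $Th(f)$), and post-composing with $u$ then recovers exactly the natural transformation $\{u|_x\}$. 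This avoids some of the index-bookkeeping in your $X \times X$ picture, where you need to be careful that $R \wedge X_+$ is presented as an $X$-indexed colimit while $Th(f) \wedge X_+$ is presented as an $(X\times X)$-indexed one, and the map $(u \wedge \mathrm{id})$ collapses the first index via the cone rather than acting diagram-by-diagram.
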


\subsubsection{Factorization homology of $H\mathbb{Z}/p$.}

Exactly as in Lemma \ref{lem-equiv-E_2-fr}, the generalized Thom spectrum $Th(\gamma_p: \Omega^2 S^3 \to BGL_1(S_{(p)}))$ is equivalent to $H\mathbb{Z}/p$ as an $E_2^{SO(2)}$-algebra. The $E_2^{SO(2)}$ structure on the Thom spectrum arises from the fact that $\gamma_p$ is an $E_2^{SO(2)}$-map (with $E_2^{SO(2)}$-algebra structure on $\Omega^2 S^3$ as in the previous section); the structure on $H\mathbb{Z}/p$ comes from its commutative ring spectrum structure.

\medskip 

The Thom spectrum $H\mathbb{Z}/p$ and its factorization homology do not come equipped with a nontrivial map to $S_{(p)}$, so we will use the Thom isomorphism theorem on the Thom spectrum of the composite

$$\Omega^2 S^3 \to BGL_1(S_{(p)}) \to BGL_1(H\mathbb{Z}/p)$$

To describe the Thom spectrum of the composite, we use the following (see, e.g., the introduction to \cite{ABGHR}):

\begin{lem}\label{lem-tensor-thom}
Let $T$ be an $R$-algebra. Then
$$Th(X \to BGL_1(R) \to BGL_1(T)) \simeq Th(X \to BGL_1(R)) \wedge_R T$$
\end{lem}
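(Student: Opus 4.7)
The proof plan is to use the colimit description of the generalized Thom spectrum from Definition \ref{def-gen-thom}, together with the fact that base change $(-)\wedge_R T \colon Mod_R \to Mod_T$ is a left adjoint and therefore preserves $\infty$-categorical colimits.

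First I would set up the relevant square of $\infty$-categories. The $R$-algebra structure on $T$ gives an extension of scalars functor that sends a free rank-one $R$-module $L$ to $L \wedge_R T$, which is a free rank-one $T$-module. This provides a functor of $\infty$-groupoids $Line_R \to Line_T$ fitting into a commuting square
$$\xymatrix{
Line_R \ar[r] \ar[d] & Mod_R \ar[d]^{(-)\wedge_R T} \\
Line_T \ar[r] & Mod_T
}$$
Under the equivalences $Line_R \simeq BGL_1(R)$ and $Line_T \simeq BGL_1(T)$, the left vertical arrow is the map $BGL_1(R) \to BGL_1(T)$ induced by $R \to T$. This is the key coherence input and is essentially the content of the symmetric monoidal / multiplicative behavior of the Thom spectrum functor in \cite{ABGHR, ABG}.

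Given the square, the composite $X \to BGL_1(R) \to BGL_1(T)$ appearing on the left hand side of the statement is by construction the composite $X \to Line_R \to Line_T$. Hence by Definition \ref{def-gen-thom},
$$Th\bigl(X \to BGL_1(R) \to BGL_1(T)\bigr) \simeq \mathrm{colim}\bigl(X \to Line_T \to Mod_T\bigr),$$
and by the commuting square this colimit agrees with $\mathrm{colim}\bigl(X \to Line_R \to Mod_R \xrightarrow{(-)\wedge_R T} Mod_T\bigr)$.

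Finally, the base change functor $(-)\wedge_R T \colon Mod_R \to Mod_T$ is left adjoint to restriction of scalars along $R \to T$, so it preserves colimits. Commuting it past the colimit gives
$$\mathrm{colim}\bigl(X \to Line_R \to Mod_R \xrightarrow{(-)\wedge_R T} Mod_T\bigr) \simeq \mathrm{colim}\bigl(X \to Line_R \to Mod_R\bigr) \wedge_R T,$$
and the right hand side is $Th(X \to BGL_1(R)) \wedge_R T$ by Definition \ref{def-gen-thom}. The main obstacle is really the coherence step: identifying the induced map $Line_R \to Line_T$ with the expected map on $BGL_1$'s, and verifying the commuting square at the $\infty$-categorical level. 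Once that is granted, the argument reduces to the formal fact that left adjoints preserve colimits.
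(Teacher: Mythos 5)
Your proof is correct, and it reconstructs the standard argument. Note that the paper itself does not supply a proof of this lemma; it is quoted with a citation to the introduction of \cite{ABGHR} (and Lemma 3.25 of \cite{AF} plays a similar role elsewhere in Section 4). Your argument --- take the $\infty$-categorical colimit description of the Thom spectrum, observe that extension of scalars $(-)\wedge_R T$ restricts along $Line_R \to Line_T$ to give the square with $Mod_R \to Mod_T$, and then commute the colimit past the left adjoint $(-)\wedge_R T$ --- is precisely the mechanism behind that citation. The one genuine input you correctly flag as the crux is the coherence step: identifying $Line_R \to Line_T$ with the standard map $BGL_1(R) \to BGL_1(T)$ induced by $R \to T$ and checking that the square commutes at the $\infty$-categorical level; this is provided by the multiplicative theory of Thom spectra in \cite{ABGHR} and \cite{ABG}, which is exactly what the paper is invoking. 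Once that is granted, your reduction to ``left adjoints preserve colimits'' finishes the argument, as you say.
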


\begin{cor}\label{cor-thom-compose}
$$Th(\Omega^2 S^3 \to BGL_1(S_{(p)}) \to BGL_1(H\mathbb{Z}/p)) \simeq H\mathbb{Z}/p \wedge_{S_{(p)}} H\mathbb{Z}/p$$
$$Th(\int_M \Omega^2 S^3 \to BGL_1(S_{(p)}) \to BGL_1(H\mathbb{Z}/p)) \simeq (\int_M H\mathbb{Z}/p) \wedge_{S_{(p)}} H\mathbb{Z}/p$$
\end{cor}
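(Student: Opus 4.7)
The plan is that both equivalences follow by directly specializing Lemma \ref{lem-tensor-thom}, which gives a change-of-rings formula for generalized Thom spectra. In each case, the relevant inputs have already been assembled earlier in the paper, so the proof is essentially an unpacking rather than a new computation.

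For the first equivalence, I would apply Lemma \ref{lem-tensor-thom} with $R = S_{(p)}$, $T = H\mathbb{Z}/p$, and $X = \Omega^2 S^3$, where the map $X \to BGL_1(R)$ is Hopkins' 2-fold loop map $\gamma_p$. The lemma then yields
$$Th(\Omega^2 S^3 \to BGL_1(S_{(p)}) \to BGL_1(H\mathbb{Z}/p)) \simeq Th(\gamma_p) \wedge_{S_{(p)}} H\mathbb{Z}/p,$$
and since by Hopkins' result $Th(\gamma_p) \simeq H\mathbb{Z}/p$, the right-hand side is $H\mathbb{Z}/p \wedge_{S_{(p)}} H\mathbb{Z}/p$, as claimed.

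For the second equivalence, I would take $X = \int_M \Omega^2 S^3$ with structure map $\int_M \gamma_p : \int_M \Omega^2 S^3 \to BGL_1(S_{(p)})$ constructed in Proposition \ref{prop-fact-hom-gen} (applied to $f = \gamma_p$, which is an $E_2^{SO(2)}$-map by the same argument as in Lemma \ref{lem-equiv-E_2-fr} at the prime $p$). Proposition \ref{prop-fact-hom-gen} identifies the Thom spectrum of $\int_M \gamma_p$ with $\int_M Th(\gamma_p) \simeq \int_M H\mathbb{Z}/p$. Applying Lemma \ref{lem-tensor-thom} again, now with $X = \int_M \Omega^2 S^3$ and the same choice of $R, T$, gives
$$Th\bigl(\textstyle\int_M \Omega^2 S^3 \to BGL_1(S_{(p)}) \to BGL_1(H\mathbb{Z}/p)\bigr) \simeq \bigl(\textstyle\int_M H\mathbb{Z}/p\bigr) \wedge_{S_{(p)}} H\mathbb{Z}/p.$$

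There is no genuine obstacle here, and the only point that warrants care is implicit in the setup rather than in the proof itself: the composite $\Omega^2 S^3 \to BGL_1(S_{(p)}) \to BGL_1(H\mathbb{Z}/p)$ must be taken as an $E_2^{SO(2)}$-map so that $\int_M$ of it is defined and so that Proposition \ref{prop-fact-hom-gen} applies. This is automatic since $\gamma_p$ itself has this structure (by the odd-primary analogue of Lemma \ref{lem-equiv-E_2-fr}) and change of coefficients $BGL_1(S_{(p)}) \to BGL_1(H\mathbb{Z}/p)$ is an $E_\infty$-map of $E_\infty$-spaces, hence preserves whatever operadic structure is present. Once this compatibility is noted, both equivalences are immediate consequences of the two lemmas cited, and no further argument is required.
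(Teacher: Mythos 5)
Your proposal is correct and matches the paper's intended argument: the paper states this as an immediate corollary of Lemma \ref{lem-tensor-thom}, applied first with $X = \Omega^2 S^3$ (using $Th(\gamma_p) \simeq H\mathbb{Z}/p$) and then with $X = \int_M \Omega^2 S^3$ (using Proposition \ref{prop-fact-hom-gen} to identify $Th(\int_M \gamma_p) \simeq \int_M H\mathbb{Z}/p$), exactly as you describe. Your additional remark about the $E_2^{SO(2)}$-structure on $\gamma_p$ is a reasonable bit of care that the paper leaves implicit when it says the odd-primary case proceeds ``as in the $p=2$ case.''
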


Note that $(-)\wedge_{S_{(p)}} H\mathbb{Z}/p = (-) \wedge H\mathbb{Z}/p$.

\medskip

In order to use a Thom isomorphism argument, we need orientations for the Thom spectra in the corollary.

For $Th(\Omega^2 S^3 \to BGL_1(H\mathbb{Z}/p))$, take the multiplication map $mult: H\mathbb{Z}/p \wedge H\mathbb{Z}/p \to H\mathbb{Z}/p$; this is clearly an equivalence when restricted to each $Th(x_0 \hookrightarrow \Omega^2 S^3 \to BGL_1(H\mathbb{Z}/p))$. 

For $Th(\int_M \Omega^2 S^3 \to BGL_1(H\mathbb{Z}/p))$, take the equivalence $\int_M H\mathbb{Z}/p \simeq \int_{M \times \mathbb{R}} H\mathbb{Z}/p$ and compose with the map induced by an embedding $M \times \mathbb{R} \hookrightarrow \mathbb{R}^3$ (for $M$ a closed or punctured genus $g$ surface) to get a map
$$v: \int_M H\mathbb{Z}/p \to \int_{\mathbb{R}^3} H\mathbb{Z}/p \simeq H\mathbb{Z}/p$$
Take as orientation the map
$$\xymatrix{
(\int_M H\mathbb{Z}/p) \wedge H\mathbb{Z}/p \ar[r]^-{v \wedge id} & H\mathbb{Z}/p \wedge H\mathbb{Z}/p \ar[r]^-{mult} & H\mathbb{Z}/p
}$$
This is also an equivalence when restricted to any $Th(x_0 \hookrightarrow \int_M \Omega^2 S^3 \to BGL_1(H\mathbb{Z}/p))$, because composing with an inclusion of any small disc $\mathbb{R}^2 \hookrightarrow M$ gives an equivalence $\int_{\mathbb{R}^2} H\mathbb{Z}/p \simeq \int_{\mathbb{R}^3} H\mathbb{Z}/p$.

\begin{prop}\label{prop-HZ/p}
For $M$ a genus $g$ surface or a punctured genus $g$ surface, Theorem \ref{thm-fact-hom-gen} gives an equivalence
$$\int_M H\mathbb{Z}/p \simeq H\mathbb{Z}/p \wedge \Map_*(M^+ - D^2, S^3)_+$$
\end{prop}

\begin{proof}
Both sides are $H\mathbb{Z}/p$-modules, so as in the $p=2$ case, it suffices to show equivalence after smashing with $H\mathbb{Z}/p$. We obtain this equivalence below.
\begin{align}
H\mathbb{Z}/p \wedge \int_M H\mathbb{Z}/p &\simeq Th(\int_M \Omega^2 S^3 \to BGL_1(H\mathbb{Z}/p)) \\
&\simeq H\mathbb{Z}/p \wedge (\int_M \Omega^2 S^3)_+ \\
&\simeq H\mathbb{Z}/p \wedge \Map_*(M^+,S^3)_+ \\
&\simeq H\mathbb{Z}/p \wedge  (\Omega^2 S^3)_+ \wedge \Map_*(M^+-D^2, S^3)_+ \\
&\simeq H\mathbb{Z}/p \wedge  H\mathbb{Z}/p \wedge \Map_*(M^+-D^2, S^3)_+
\end{align}

(1) holds by Corollary \ref{cor-thom-compose}.

(2) holds by the Thom isomorphism theorem (Proposition \ref{prop-thom-iso}), using the fact that $Th(\int_M \Omega^2 S^3 \to BGL_1(H\mathbb{Z}/p))$ admits an orientation.

(3) holds by Lemma \ref{lem-fact-hom-mahowald}.

(4) holds due to the splitting of the mapping space, as in the proof of Proposition \ref{prop-HZ/2}.

Finally, (5) holds by the Thom isomorphism for $Th(\Omega^2S^3 \to BGL_1(S_{(p)}) \to BGL_1(H\mathbb{Z}/p)$), and we can conclude.
\end{proof}

\subsubsection{Factorization homology of $H\mathbb{Z}_{(p)}$.}

We will calculate $\int_M H\mathbb{Z}_{(p)}$ for $M$ a closed or punctured genus $g$ surface, where $\mathbb{Z}_{(p)}$ denotes the $p$-local integers, using the fact that $H\mathbb{Z}_{(p)}$ is the Thom spectrum of a map 
$$\beta: \Omega^2(S^3\langle3\rangle) \to BSL_1(S_{(p)})$$
\noindent where $S^3\langle3\rangle$ denotes the 3-connected cover of $S^3$; the map $S^3\langle3\rangle \to S^3$ induces isomorphism on all $\pi_i$, $i>3$, and $\pi_i(S^3\langle3\rangle)=0$ for $i \leq 3$. The space $SL_1(S_{(p)})$ is the identity component of $GL_1(S_{(p)})$.

As in \cite{CMT} or Section 9 of \cite{Blu}, this equivalence is given by the composite

$$\xymatrix{
Th(\beta) \ar[r]^-{\beta_*}  & MSL_1(S_{(p)}) \ar[r]^-{Thom} & H\mathbb{Z}_{(p)}
}$$

The Thom spectrum $MSL_1(S_{(p)})$ has a Thom class to $H\mathbb{Z}_{(p)}$ because $BSL_1(S_{(p)})$ is simply-connected; it is the 1-connected cover of $BGL_1(S_{(p)})$. The map 
$$\beta: \Omega^2(S^3\langle3\rangle) \to BSL_1(S_{(p)})$$
\noindent is given by lifting the composite

$$\xymatrix{
\Omega^2(S^3\langle3\rangle) \ar[r] & \Omega^2 S^3 \ar[r]^-{\gamma_p} & BGL_1(S_{(p)})
}$$
\noindent to $BSL_1(S_{(p)}) = BGL_1(S_{(p)})\langle1\rangle$.

\begin{lem}\label{lem-gp-action-cover}
The map $\beta$ is an $E_2 ^{SO(2)}$-map, where the $E_2^{SO(2)}$-action on $\Omega^2 S^3$ is as in Section 3. With respect to this structure and the trivial $E_2^{SO(2)}$-action on $H\mathbb{Z}_{(p)}$, 
$$Th(\beta) \simeq H\mathbb{Z}_{(p)}$$
\noindent as $E_2^{SO(2)}$-algebras.
\end{lem}

\begin{proof}
The Thom class is a map of $E_2^{SO(2)}$-algebras with trivial $SO(2)$-action on both spectra. Thus, it is enough to realize $\beta$ as an $E_2^{SO(2)}$-map, where $BSL_1(S_{(p)})$ is given the trivial action. Consider the following commutative diagram:

$$\xymatrix{
(D_2 \times L)S^1 \ar[r]^-{\gamma_p} \ar[d]^{act} & BGL_1(S_{(p)}) \ar[d] \\
S^1 \ar[r] & B\mathbb{Z}^{\times}_{(p)}
}$$

All of the maps in this diagram are $\mathcal{D}_2 \times \mathcal{L}$-maps, as well as $SO(2)$-maps (with respect to the Salvatore-Wahl action on $(D_2 \times L)S^1$, and the trivial action on the other spaces). Thus, the induced map between the homotopy fibers of the vertical maps is also such. This realizes $\beta$ as an $E_2^{SO(2)}$-map.
\end{proof}

By Theorem \ref{thm-fact-hom-gen}, $\int_M H\mathbb{Z}_{(p)}$ is a generalized Thom spectrum of a virtual bundle over $\int_M \Omega^2 (S^3\langle3\rangle)$, where the $E_2^{SO(2)}$-action on $\Omega^2 (S^3\langle3\rangle)$ is such that the map to $\Omega^2 S^3$ is an $E_2^{SO(2)}$-algebra map. That is, the map $S^3\langle3\rangle \to S^3$ is $SO(2)$-equivariant. The following gives an easy description of $\int_M H\mathbb{Z}_{(p)}$.

\begin{prop}\label{prop-HZ(p)}
Let $M$ be a closed genus $g$ surface or a punctured genus $g$ surface. Then
$$\int_M H\mathbb{Z}_{(p)} \simeq H\mathbb{Z}_{(p)} \wedge \Map_*(M^+ - D^2, S^3,\langle3\rangle)_+$$
In particular,
$$\int_{\Sigma_g} H\mathbb{Z}_{(p)} \simeq H\mathbb{Z}_{(p)} \wedge S^3\langle3\rangle_+ \wedge (\Omega(S^3\langle3\rangle))^{2g}_+$$
\end{prop}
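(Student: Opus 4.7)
The plan is to proceed in complete parallel with the proof of Proposition \ref{prop-HZ/p}, using the $E_2^{SO(2)}$-description of $H\mathbb{Z}_{(p)}$ from Lemma \ref{lem-gp-action-cover}. By Proposition \ref{prop-fact-hom-gen}, $\int_M H\mathbb{Z}_{(p)}$ is the generalized Thom spectrum of $\int_M \beta : \int_M \Omega^2(S^3\langle 3\rangle) \to BGL_1(S_{(p)})$, and both sides of the claimed equivalence are $H\mathbb{Z}_{(p)}$-modules, so it is enough to prove equivalence after smashing with $H\mathbb{Z}_{(p)}$. Exactly as in Corollary \ref{cor-thom-compose}, Lemma \ref{lem-tensor-thom} together with the naturality of the Thom spectrum functor identifies
$$H\mathbb{Z}_{(p)} \wedge \int_M H\mathbb{Z}_{(p)} \simeq Th\left( \int_M \Omega^2(S^3\langle 3\rangle) \to BGL_1(H\mathbb{Z}_{(p)}) \right),$$
and this Thom spectrum is orientable via an orientation constructed as in the $H\mathbb{Z}/p$ case (using an embedding $M \times \mathbb{R} \hookrightarrow \mathbb{R}^3$ together with the multiplication of $H\mathbb{Z}_{(p)}$). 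Proposition \ref{prop-thom-iso} then yields
$$H\mathbb{Z}_{(p)} \wedge \int_M H\mathbb{Z}_{(p)} \simeq H\mathbb{Z}_{(p)} \wedge \left( \int_M \Omega^2(S^3\langle 3\rangle) \right)_+.$$

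The crucial intermediate result is the $S^3\langle 3\rangle$-analog of Proposition \ref{prop-fact-hom-mahowald}: for $M$ a closed or punctured orientable surface, $\int_M \Omega^2(S^3\langle 3\rangle) \simeq Map_c(M, S^3\langle 3\rangle)$. For punctured $M$, Theorem \ref{thm-NAPD-framed} gives this equivalence directly. For closed $\Sigma_g$, Theorem \ref{thm-NAPD} identifies $\int_M \Omega^2(S^3\langle 3\rangle)$ with the compactly supported sections of the bundle $B^{TM}\Omega^2(S^3\langle 3\rangle) = \tau^*(ESO(2) \times_{SO(2)} S^3\langle 3\rangle)$, so the task reduces to proving this bundle is trivial. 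The plan is to pin down the $SO(2)$-action through the fibration $S^3\langle 3\rangle \to S^3 \to K(\mathbb{Z},3)$: because the $SO(2)$-action on $S^3$ used in Proposition \ref{prop-fact-hom-mahowald} preserves the orientation class, this fibration is $SO(2)$-equivariant with trivial action on $K(\mathbb{Z},3)$. Passing to Borel constructions over $BSO(2)$ and pulling back to $M$ along $\tau$ produces a fiber sequence of bundles
$$B^{TM}\Omega^2(S^3\langle 3\rangle) \to \tau^*(ESO(2) \times_{SO(2)} S^3) \to M \times K(\mathbb{Z},3),$$
whose middle term is trivial by Proposition \ref{prop-fact-hom-mahowald} and whose right-hand term is trivial, forcing $B^{TM}\Omega^2(S^3\langle 3\rangle) \cong M \times S^3\langle 3\rangle$.

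Armed with this identification, the rest proceeds exactly as in Proposition \ref{prop-HZ/p}. Since $S^3\langle 3\rangle$ is a double loop space (being $B^2 \Omega^2(S^3\langle 3\rangle)$, as it is $2$-connected), the stable splitting of $\Sigma^2(\Sigma_g^+)$ yields
$$Map_c(M, S^3\langle 3\rangle) \simeq \Omega^2(S^3\langle 3\rangle) \times Map_*(M^+ - D^2, S^3\langle 3\rangle).$$
A second application of the Thom isomorphism, now to $Th(\Omega^2(S^3\langle 3\rangle) \to BGL_1(H\mathbb{Z}_{(p)})) \simeq H\mathbb{Z}_{(p)} \wedge H\mathbb{Z}_{(p)}$, identifies $H\mathbb{Z}_{(p)} \wedge (\Omega^2(S^3\langle 3\rangle))_+ \simeq H\mathbb{Z}_{(p)} \wedge H\mathbb{Z}_{(p)}$, giving
$$H\mathbb{Z}_{(p)} \wedge \int_M H\mathbb{Z}_{(p)} \simeq H\mathbb{Z}_{(p)} \wedge H\mathbb{Z}_{(p)} \wedge Map_*(M^+ - D^2, S^3\langle 3\rangle)_+.$$
Cancelling one smash factor of $H\mathbb{Z}_{(p)}$ by means of the $H\mathbb{Z}_{(p)}$-algebra structure on $\int_M H\mathbb{Z}_{(p)}$ (obtained from a small disk inclusion $\mathbb{R}^2 \hookrightarrow M$) delivers the claimed equivalence, and the special case of closed $\Sigma_g$ follows from $\Sigma_g^+ - D^2 \simeq \bigvee_{2g} S^1$. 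The main obstacle will be the careful verification that the $SO(2)$-action on $S^3\langle 3\rangle$ coming from the equivariant delooping of $\Omega^2(S^3\langle 3\rangle)$ is indeed the one induced from the action on $S^3$ through the cover map, so that the fiber-sequence argument above genuinely trivializes $B^{TM}\Omega^2(S^3\langle 3\rangle)$ over closed oriented $\Sigma_g$.
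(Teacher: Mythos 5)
Your overall architecture is parallel to the paper's: reduce to the claim that $\int_M \Omega^2(S^3\langle 3\rangle) \simeq Map_c(M, S^3\langle 3\rangle)$ and then run the two Thom isomorphism arguments as in Proposition \ref{prop-HZ/p}. That outer shell is fine. The genuine difference, and the gap, is in how you establish the key equivalence for closed $\Sigma_g$. The paper does \emph{not} attempt to trivialize the bundle $B^{TM}\Omega^2(S^3\langle 3\rangle)$. Instead, it analyzes how the map $\Gamma(B^{TM}\Omega^2(S^3\langle 3\rangle)) \to \Gamma(B^{TM}\Omega^2 S^3) \simeq Map(M,S^3)$ acts on homotopy groups, shows this agrees with the effect of $Map(M, S^3\langle 3\rangle) \to Map(M, S^3)$, and carries this out via two commutative diagrams of fiber sequences over $Map(M,M)$ (first over the identity section, then over a constant map).

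Your approach instead tries to show the bundle itself is trivial. The step ``whose middle term is trivial \ldots\ and whose right-hand term is trivial, forcing $B^{TM}\Omega^2(S^3\langle 3\rangle) \cong M \times S^3\langle 3\rangle$'' is not valid as stated. After choosing a trivialization of the middle term (which a priori has nothing to do with the fiberwise map to $K(\mathbb{Z},3)$), the bundle map becomes $M \times S^3 \to M \times K(\mathbb{Z},3)$, $(m,x) \mapsto (m, g_m(x))$, encoded by $g: M \to Map(S^3, K(\mathbb{Z},3))$. The homotopy fiber over the constant section, which is your $B^{TM}\Omega^2(S^3\langle 3\rangle)$, depends on the homotopy class of $g$; if $g$ is not nullhomotopic, the fiber bundle need not be trivial. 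Two-out-of-three triviality in a fiber sequence of bundles is simply not a general principle. You need to supply the extra step that $g$ is rectifiable to a constant. In this instance it is: the component of $Map(S^3, K(\mathbb{Z},3))$ containing a generator of $H^3(S^3;\mathbb{Z})$ has the weak homotopy type of $K(\mathbb{Z},3)$ (its based mapping space is homotopy discrete since $\tilde H^i(S^3)=0$ for $i<3$), so $[M, Map(S^3, K(\mathbb{Z},3))_{\mathrm{gen}}] \cong H^3(M;\mathbb{Z}) = 0$ for $M$ a surface. With this addition your approach is sound, and in fact proves a stronger statement than the paper does (an actual trivialization of the bundle rather than just an identification of the section space), by an arguably more transparent argument.

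The obstacle you flag at the end -- matching the $SO(2)$-action on $S^3\langle 3\rangle$ with the one induced from the action on $S^3$ -- is already taken care of in the paper. The discussion preceding Proposition \ref{prop-HZ(p)}, together with Lemma \ref{lem-gp-action-cover}, specifies the $E_2^{SO(2)}$-structure on $\Omega^2(S^3\langle 3\rangle)$ precisely so that the covering map to $\Omega^2 S^3$ is an $E_2^{SO(2)}$-map, hence $S^3\langle 3\rangle \to S^3$ is $SO(2)$-equivariant. Given that, equivariance of $S^3 \to K(\mathbb{Z},3)$ with trivial action on the target only requires that $SO(2)$ preserve the fundamental class of $S^3$, which holds because $SO(2)$ is connected. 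So the real gap is the ``forcing'' step, not the equivariance.
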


\begin{proof}
We first prove that $\int_M \Omega^2 (S^3\langle3\rangle) \simeq \Map_c(M, S^3\langle3\rangle)$. Punctured genus $g$ surfaces have trivial tangent bundles, so it remains to show this for $M$ a closed genus $g$ surface. For such $M$, $\int_M \Omega^2 (S^3\langle3\rangle) \simeq \Gamma(B^{TM} \Omega^2(S^3\langle3\rangle))$. Recall that, from Lemma \ref{lem-fact-hom-mahowald}, $\Gamma(B^{TM} \Omega^2 S^3) \simeq \Map(M,S^3)$, and that
$$\Map(M, S^3) \simeq S^3 \times (\Omega S^3)^{2g} \times \Omega^2 S^3$$

Hence the map $\Map(M, S^3\langle3\rangle) \to \Map(M, S^3)$ has the effect of killing $\pi_1(\Map(M, S^3))$ and the $\mathbb{Z}$-summands in $\pi_2$ and $\pi_3$ ($2g$ summands in $\pi_2$, one in $\pi_3$). It is an isomorphism on the rest of $\pi_*$. Thus, in order to show  
$$\Gamma(B^{TM} \Omega^2(S^3\langle3\rangle)) \simeq \Map(M, S^3\langle3\rangle)$$
\noindent it suffices to show that the map 
$$\Gamma(B^{TM} \Omega^2(S^3\langle3\rangle)) \to \Gamma(B^{TM} \Omega^2 S^3)$$
\noindent has this effect on homotopy groups as well. In order to prove this, consider the commutative diagram of fiber sequences

$$\xymatrix{
\Gamma(B^{TM} \Omega^2(S^3\langle3\rangle)) \ar[r] \ar[d] & \Gamma(B^{TM} \Omega^2 S^3) \ar[d] \\
\Map(M, B^{TM} \Omega^2(S^3\langle3\rangle)) \ar[r] \ar[d] & \Map(M,B^{TM} \Omega^2 S^3) \ar[d] \\
\Map(M,M) \ar[r] & \Map(M,M)
}$$
\noindent where the fiber is taken over the identity map of $M$. As the map on base spaces is the identity, it suffices to show that the map 
$$\Map(M, B^{TM} \Omega^2(S^3\langle3\rangle)) \to \Map(M,B^{TM} \Omega^2 S^3)$$
\noindent annihilates $\pi_1(\Map(M,B^{TM} \Omega^2 S^3))$ and the $\mathbb{Z}$-summands in $\pi_2$ and $\pi_3$, and induces isomorphism on the rest of $\pi_*$. This follows by considering the following commutative diagram of fiber sequences

$$\xymatrix{
\Map(M, S^3\langle3\rangle) \ar[r] \ar[d] & \Map(M,S^3) \ar[d] \\
\Map(M, B^{TM} \Omega^2(S^3\langle3\rangle)) \ar[r] \ar[d] &\Map(M, B^{TM} \Omega^2 S^3)  \ar[d]\\
\Map(M,M) \ar[r] & \Map(M,M)
}$$
This time, the fiber is taken over a constant map $M \to M$. Again the map of base spaces is the identity. The map on fibers annihilates $\pi_1(\Map(M, S^3))$ and the $\mathbb{Z}$-summands in $\pi_2$ and $\pi_3$, and induces isomorphism on the rest of $\pi_*$, thus the map on total spaces does the same. This concludes the proof that

$$\int_M \Omega^2 (S^3\langle3\rangle) \simeq \Map_c(M, S^3\langle3\rangle)$$

\medskip

To prove that $\int_M H\mathbb{Z}_{(p)} \simeq H\mathbb{Z}_{(p)} \wedge \Map_*(M^+-D^2, S^3\langle3\rangle)$, consider the composite below. The splitting of the mapping space occurs because $S^3\langle3\rangle$ is an H-space. The Thom isomorphism holds because the Thom spectrum $\int_M H\mathbb{Z}_{(p)}$ admits an $H\mathbb{Z}_{(p)}$-orientation, exactly as in the $H\mathbb{Z}/p$ case (or because $\Map_*(M^+, S^3\langle 3 \rangle)$ is simply connected).

$$\xymatrix{
\int_M H\mathbb{Z}_{(p)} \ar[d]^-{1 \wedge id} \\
H\mathbb{Z}_{(p)} \wedge \int_M H\mathbb{Z}_{(p)} \ar[d]^-{Thom}_-{\wr} \\
H\mathbb{Z}_{(p)} \wedge \Map_*(M^+, S^3\langle 3\rangle)_+ \ar[d]_-{\wr} \\
H\mathbb{Z}_{(p)} \wedge \Omega^2(S^3\langle3\rangle)_+ \wedge \Map_*(M^+ -D^2, S^3\langle3\rangle)_+ \ar[d]^-{Thom}_-{\wr} \\
H\mathbb{Z}_{(p)} \wedge H\mathbb{Z}_{(p)} \wedge \Map_*(M^+ -D^2, S^3\langle 3 \rangle)_+ \ar[d]^-{mult \wedge id} \\
H\mathbb{Z}_{(p)} \wedge \Map_*(M^+ -D^2, S^3\langle 3 \rangle)_+
}$$

To see that this is an equivalence, it suffices to show that it induces isomorphism on homology with coefficients in $\mathbb{Z}_{(p)}$, because both spectra in question are $H\mathbb{Z}_{(p)}$-modules (as in the $H\mathbb{Z}/2$ case, $\int_M H\mathbb{Z}_{(p)}$ is even an $H\mathbb{Z}_{(p)}$-algebra). By the Thom isomorphism, the spectra $\int_M H\mathbb{Z}_{(p)}$ and $H\mathbb{Z}_{(p)} \wedge \Map_*(M^+ -D^2, S^3\langle 3 \rangle)_+$ both have $\mathbb{Z}_{(p)}$-homology

$$H_*(\Map_*(M^+, S^3\langle3\rangle); \mathbb{Z}_{(p)}) \cong H_*(H\mathbb{Z}_{(p)}; \mathbb{Z}_{(p)}) \otimes H_*(\Map_*(M^+ -D^2, S^3\langle 3 \rangle); \mathbb{Z}_{(p)})$$

The middle four spectra in the sequence of maps above have $\mathbb{Z}_{(p)}$-homology

$$H_*(H\mathbb{Z}_{(p)}; \mathbb{Z}_{(p)}) \otimes H_*(H\mathbb{Z}_{(p)}; \mathbb{Z}_{(p)}) \otimes H_*(\Map_*(M^+ -D^2, S^3\langle 3 \rangle); \mathbb{Z}_{(p)})$$

The first map in the sequence sends

$$\Sigma_i a_i \otimes b_i \in H_*(H\mathbb{Z}_{(p)}; \mathbb{Z}_{(p)}) \otimes H_*(\Map_*(M^+ -D^2, S^3\langle 3 \rangle); \mathbb{Z}_{(p)})$$

\noindent to $1 \otimes (\Sigma_i a_i \otimes b_i)$. The next three maps are isomorphisms that preserve the tensor factors. The last map, $mult \wedge id$, then takes $1 \otimes (\Sigma_i a_i \otimes b_i)$ to $\Sigma_i a_i \otimes b_i$. Thus the composite induces isomorphism on $\mathbb{Z}_{(p)}$-homology. This concludes the proof.

\end{proof}

This allows us to calculate the factorization homology of $H\mathbb{Z}$ over closed and punctured genus $g$ surfaces.

\begin{cor}\label{cor-HZ}
For $M$ a closed or punctured genus $g$ surface,
$$\int_M H\mathbb{Z} \simeq H\mathbb{Z} \wedge \Map_*(M^+ - D^2, S^3\langle3\rangle)_+$$
\end{cor}

\begin{proof}
We first show that the localizations at each prime $p$ of these two spectra are equivalent. Localization at $p$ is given by smashing with the $p$-local sphere spectrum, thus it commutes with colimits and is symmetric monoidal (in the $\infty$-categorical sense), see Lemma 3.4 of \cite{GGN} or Proposition 2.2.1.9 of \cite{HA}. Hence by Lemma 3.25 of \cite{AF}, localization at $p$ commutes with factorization homology and

$$(\int_M H\mathbb{Z})_{(p)} \simeq \int_M H\mathbb{Z}_{(p)} \simeq H\mathbb{Z}_{(p)} \wedge \Map_*(M^+ - D^2, S^3\langle3\rangle)_+$$

The spectrum $H\mathbb{Z} \wedge \Map_*(M^+ - D^2, S^3\langle3\rangle)_+$ is rationally trivial because $\Map_*(M^+ - D^2, S^3\langle3\rangle)$ is a product of copies of $\Omega (S^3\langle3\rangle)$, or a product of $S^3\langle3\rangle$ with copies of $\Omega (S^3\langle3\rangle)$, which are rationally trivial. The spectrum $\int_M H\mathbb{Z}$ is also rationally trivial: the rationalization of $\int_M H\mathbb{Z}$ is equivalent to the rationalization of $(\int_M H\mathbb{Z})_{(p)} \simeq \int_M H\mathbb{Z}_{(p)}$, which is trivial, again because $\Map_*(M^+ - D^2, S^3\langle3\rangle)$ is rationally trivial. Every rationally trivial spectrum decomposes into its $p$-primary parts, and we have shown that the $p$-localizations of $\int_M H\mathbb{Z}$ and $H\mathbb{Z} \wedge \Map_*(M^+ - D^2, S^3\langle3\rangle)_+$ agree; thus they are equivalent.
\end{proof}

\begin{cor}\label{cor-higher-THH-HZ}
$$THH_*^{S^2}(H\mathbb{Z}) \cong H_*(S^3\langle3\rangle)$$
$$THH_*^{\mathbb{T}^2}(H\mathbb{Z}) \cong H_*(S^3\langle3\rangle) \otimes H_*(\Omega(S^3\langle3\rangle))^{\otimes 2}$$

$H_*(S^3\langle3\rangle)$ is 0 in odd dimensions, and $\mathbb{Z}/n\mathbb{Z}$ in dimension $2n$.

$H_*(\Omega(S^3\langle3\rangle))$ is 0 in positive even dimensions, and $\mathbb{Z}/n\mathbb{Z}$ in dimension $2n-1$.
\end{cor}

\section{Hochschild cohomology of Thom spectra}

In this section, we use Theorem \ref{thm-main} and Theorem \ref{thm-fact-hom-gen}, and the relation between factorization homology and $E_n$ Hochschild cohomology, to describe the $E_n$ Hochschild cohomology of Thom spectra. As in the previous section, we are working in the $\infty$-categorical context, and use $E_n$ to refer to any operad equivalent to $\mathcal{D}_n$.

\begin{defn}\label{def-U(A)}
Let $A$ be an $E_n$-algebra. Denote by $U(A)$ the $E_1$-algebra $\int_{S^{n-1} \times \mathbb{R}} A$; the $E_1$-algebra structure is induced by the action of embeddings on the $\mathbb{R}$-coordinate, $S^{n-1} \times (\coprod_k \mathbb{R}) \hookrightarrow S^{n-1} \times \mathbb{R}$. In some of the literature, $U(A)$ is referred to as the enveloping algebra of $A$ (see, e.g., Proposition 3.16 of \cite{Fra}).
\end{defn}

As in Section 3 of \cite{Fra}, left $U(A)$-modules are $E_n$-$A$-modules. For example, $A = \int_{\mathbb{R}^n} A$ is a left module over $U(A)$. This action can be roughly described as induced by an embedding $(S^{n-1} \times \mathbb{R}) \coprod \mathbb{R}^n \hookrightarrow \mathbb{R}^n$. In this embedding, $S^{n-1} \times \mathbb{R}$ is an annulus around the open disk $\mathbb{R}^n$, as in Figure 1 below. In this figure, the annulus and the disk are labeled by $A$, in keeping with the intuition of factorization homology as a labeled configuration space. For more details about this action, see Sections 2 and 3 of \cite{Fra}, Section 5 of \cite{GTZ12}, or Section 2 of \cite{Hor16}.

\begin{figure}[h]
\includegraphics[scale=0.5]{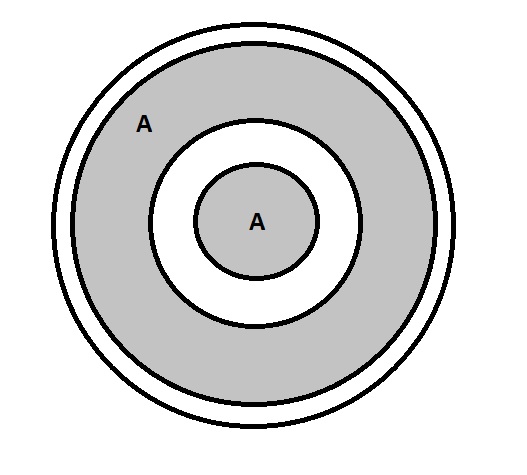}
\centering
\caption{The action of $U(A)$ on $A$}
\end{figure}

\begin{defn}\label{def-higher-THH*}
For an $E_n$-algebra $A$ in spectra, define its $E_n$ Hochschild cohomology $THC_{E_n}(A,A) = Rhom_{U(A)}(A,A)$. 
\end{defn}

Equivalently, this is $Rhom_{E_n\text{-}A\text{-}mod}(A,A)$, the mapping spectrum in the category of $E_n$-$A$ modules (see, e.g., Proposition 3.19 of \cite{Hor16}). 

\begin{rem}\label{rem-THH*}
For $n=1$, this recovers Hochschild cohomology. For $n=1$, $S^{n-1} \times \mathbb{R} = S^0 \times \mathbb{R}$ and
$$\int_{S^0 \times \mathbb{R}} A \simeq A \wedge A^{op}$$
\noindent as an algebra. Under this identification, the action of $\int_{S^0 \times \mathbb{R}} A$ on $A$ as its enveloping algebra agrees with the action of $A \wedge A^{op}$ on $A$. Thus, for $n=1$, the definition of higher Hochschild cohomology above reduces to $Rhom_{A \wedge A^{op}}(A,A)$, which is Hochschild cohomology.
\end{rem}

Note the the following terminology issue: higher Hochschild cohomology is not what Ayala-Francis call factorization cohomology in \cite{AF2}. Factorization cohomology is an invariant of $E_n$-coalgebras, and in that sense is a generalization of coHochschild homology of coalgebras (see, e.g., \cite{HPS}), rather than of Hochschild cohomology of algebras. Factorization cohomology is can be thought of as cohomology for manifolds, whereas Hochschild cohomology can be thought of as cohomology for algebras.

\begin{defn}\label{def-higher-HH}
Let $A$ be an $E_n$-algebra in spectra. The $E_n$ Hochschild homology of $A$, $THH^{E_n}(A)$, is defined as the derived smash product $A \wedge^L _{U(A)} A$, where $U(A)$ acts on $A$ on the right using an equivalence $U(A) \simeq U(A)^{op}$, see Lemma 3.20 of \cite{Fra}.
\end{defn}

For $n=1$, this recovers Hochschild homology. If $A$ is $E_{n+1}$, $THH^{E_n}(A) \simeq \int_{S^n \times \mathbb{R}} A$.

\begin{rem}\label{rem-higher-HH-FH}
In general, higher topological Hochschild homology need not coincide with factorization homology. For $n \neq 1,3,7$, $S^n$ is not framed and thus $\int_{S^n} A$ is only defined for $A$ an $E_n ^{SO(n)}$ algebra. Higher Hochschild homology is defined for any $E_n$-algebra, and in particular does not depend on the data of an $SO(n)$-action.

An example in which the two differ is given, for $n=2$, by $A = \Sigma^\infty _+ \Omega^2 \Sigma^2 (S^0 \vee S^0)$. This can be given an $E_2^{SO(2)}$-algebra structure by specifying that $SO(2)$ acts on the $\Sigma^2$ coordinate as it acts on $(\mathbb{R}^2)^+$. Then by Theorem 9.4 of \cite{Hor},

$$THH^{E_2}(A) \simeq \Sigma^\infty _+ \Map(S^2, S^2 \vee S^2)$$

On the other hand,

$$\int_{S^2} A \simeq \Sigma^\infty _+ \Gamma(\tau_2^+(S^0 \vee S^0))$$

\noindent where $\tau_2^+$ denotes the fiberwise one-point compactification of $TS^2$, and $\tau_2^+(S^0 \vee S^0)$ is the fiberwise smash product of this bundle with $S^0 \vee S^0$. In Example 15 of \cite{Sal04}, Salvatore shows that $\Map(S^2, S^2 \vee S^2)$ and $\Gamma(\tau_2^+(S^0 \vee S^0))$ do not have the same rational homology.

One can think of factorization homology as keeping track of the tangent bundle of $S^2$, whereas higher Hochschild homology only sees a trivial bundle on $S^2$.
\end{rem}

Now let $X$ be a pointed, $(n-1)$-connected space, and let $f: X \to B^{n+1}G$ be a pointed map, where $G$ is one of the stabilized Lie groups ($O$,$U$,$SO$,...) or $G = GL_1(R)$ for $R$ a commutative ring spectrum. Let $A = \Omega^n X ^{\Omega^n f}$ be the Thom spectrum of $\Omega^n f$, hence an $E_n$-ring spectrum.

Recall that $\Omega X$ acts on $\Omega^n X$, giving an action of $\pi_1(X)$ on $\pi_n(X)$. For $n=1$, $\pi_1(X)$ acts on itself by conjugation, as in Figure 2:

\begin{figure}[h]
\includegraphics[scale=0.5]{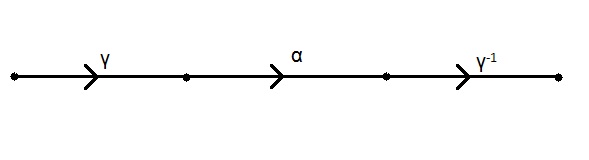}
\centering
\caption{The conjugation action of $\Omega X$ on itself}
\end{figure}

For $n>1$, $\pi_1(X)$ acts on $\pi_n(X)$ via a generalized conjugation action, as in Figure 3 below.

\begin{figure}[h]
\includegraphics[scale=0.5]{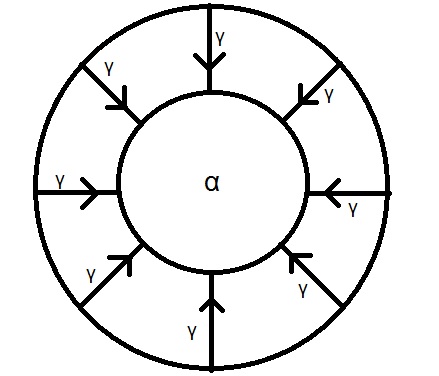}
\centering
\caption{The generalized conjugation action of $\Omega X$ on $\Omega^n X$}
\end{figure}

This action can also be thought of as follows: include $\Omega X$ into $\Omega \Map(S^{n-1},X) = \Map_c(S^{n-1} \times \mathbb{R}, X)$ via the inclusion $X \to \Map(S^{n-1}, X)$ of constant maps. This is a loop map. The mapping space $\Map_c(S^{n-1} \times \mathbb{R}, X) = \int_{S^{n-1} \times \mathbb{R}} \Omega^n X$ acts on $\Omega^n X$ as in Figure 1.

We generalize this action to Thom spectra. That is, we construct an action of $\Sigma^\infty _+ \Omega X$ on $\Omega^n X ^{\Omega^n f}$, which specializes to the generalized conjugation action above if $f$ is a trivial map. This action arises from a map of ring spectra $\Sigma^\infty _+ \Omega X \to \int_{S^{n-1} \times \mathbb{R}} \Omega^n X ^{\Omega^n f}$ as follows:

\medskip

\textbf{Construction.} Let $c: \Omega X \to \Omega \Map(S^{n-1}, X)$ be the homotopy fiber of the map $i: \Omega \Map(S^{n-1},X) \to \Omega^n X$, obtained by considering $\Omega \Map(S^{n-1}, X)$ as maps $(D^n, \partial D^n) \to (X, *)$ that send $0 \in D^n$ to the basepoint. Note that $i$ is not a fibration; even when $\Omega^n X$ is path connected, $i$ is not surjective, as every $\alpha: D^n \to X$ in its image evaluates to the basepoint on $0 \in D^n$. 

\begin{lem}\label{lem-ring-map} On Thom spectra, $c$ induces a map of ring spectra

$$\mathbf{c}: \Sigma^\infty _+ \Omega X \to \int_{S^{n-1} \times \mathbb{R}} \Omega^n X ^{\Omega^n f} = U(\Omega^n X ^{\Omega^n f})$$

\end{lem}

\begin{proof} Note that $c$ is an $E_1$-map; it can obtained by looping the map $X \to \Map(S^{n-1}, X)$ obtained by inclusion of constant maps. This can be seen by observing that $i: \Omega \Map(S^{n-1}, X) \to \Omega^n X$ is the fiber of the map $ev_0 : \Omega^n X \to X$, which evaluates a map from $D^n$ at 0, and that $ev_0$ is the homotopy fiber of the map $X \to \Map(S^{n-1}, X)$ above. (When $n=1$, $c$ is the map $\Omega X \to \Omega X \times \Omega X$ which sends $g$ to $(g, g^{-1})$.)

Because $c$ is an $E_1$-map, it induces, for each $E_1$-map $h: \Omega \Map(S^{n-1}, X) \to BG$, a map of ring spectra $Th(h \circ c) \to Th(h)$. We take $h = \Omega^n f \circ i$. Note that by Theorem \ref{thm-main} (or Theorem \ref{thm-fact-hom-gen}), $Th(h) \simeq \int_{S^{n-1} \times \mathbb{R}} \Omega^n X ^{\Omega^n f}$, because $S^{n-1} \times \mathbb{R} \hookrightarrow \mathbb{R}^n$ is a framed embedding and the diagram

$$\xymatrix{
\Omega \Map(S^{n-1}, X) \ar[r]^-i \ar[d]^-{\Omega^n f} & \Omega^n X \ar[d]^-{\Omega^n f} \\
\Omega \Map(S^{n-1}, B^{n+1} G) \ar[r]^-i & \Omega^n B^{n+1} G \\
}$$

\noindent commutes. Thus we obtain a map of ring spectra $Th(\Omega^n f \circ i \circ c) \to \int_{S^{n-1} \times \mathbb{R}} \Omega^n X ^{\Omega^n f}$. By construction, $i \circ c$ is nullhomotopic, and furthermore $\Omega^n f \circ i \circ c$ is nullhomotopic via $E_1$-maps; this composite can be written as

$$\xymatrix{
\Omega X \ar[r]^-{\Omega f} & \Omega B^{n+1} G \ar[r] & \Omega^n B^{n+1} G
}$$

\noindent where the last map is induced by a nullhomotopic map $S^1 \to S^n$. $B^{n+1}G$ is an infinite loop space, thus this map is via $E_1$-maps. Therefore $Th(\Omega^n f \circ i \circ c) \simeq \Sigma^\infty _+ \Omega X$ as ring spectra, and $c$ induces a map of ring spectra $\Sigma^\infty _+ \Omega X \to \int_{S^{n-1} \times \mathbb{R}} \Omega^n X ^{\Omega^n f}$.

\end{proof}

The following proof was inspired by Beardsley's approach to relative Thom spectra in \cite{Bea17}.

\begin{thm}\label{thm-conj-rhom}
Under the generalized conjugation action of $\Sigma^\infty_+ \Omega X$ on $A =\Omega^n X ^{\Omega^n f}$, coming from the ring map $\mathbf{c}: \Sigma^\infty _+ \Omega X \to U(A)$, there are equivalences
$$THH^{E_n}(A,A) \simeq S \wedge^L_{\Sigma^\infty _+ \Omega X} A$$
$$THC_{E_n}(A,A) \simeq Rhom_{\Sigma^\infty _+ \Omega X}(S, A)$$
\end{thm}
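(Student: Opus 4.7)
The plan is to proceed in three main steps: (1) construct the $E_1$-ring map $\Sigma^\infty _+ \Omega X \to U(A)$ realizing the generalized conjugation action, (2) establish a Thom-style base-change identification $A \simeq U(A) \wedge^L _{\Sigma^\infty _+ \Omega X} S$ of $U(A)$-modules, and (3) deduce both equivalences by formal manipulations.

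For Step 1, by Theorem \ref{thm-framed-gen} applied to the framed $n$-manifold $S^{n-1} \times \mathbb{R}$, we have $U(A) = \int_{S^{n-1} \times \mathbb{R}} A$ realized as the generalized Thom spectrum of a map $\phi: Map_c(S^{n-1} \times \mathbb{R}, X) \to BG$, given explicitly by the Pontryagin--Thom recipe of Remark \ref{rem-P-scanning}(2). The evaluation-at-basepoint fibration $Map(S^{n-1}, X) \to X$ is split by constant maps, producing an $E_1$-map $\iota: \Omega X \to \Omega Map(S^{n-1}, X) \simeq Map_c(S^{n-1} \times \mathbb{R}, X)$ whose image consists of loops of constant-in-$S^{n-1}$ maps. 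The composition $\phi \circ \iota$ carries $\gamma \in \Omega X$ to a class obtained from constant-in-$S^{n-1}$ data, which admits a canonical null-homotopy because the Pontryagin--Thom integrand is constant in $S^{n-1}$ and thus vanishes after the collapse $S^{n-1}_+ \to S^0$ implicit in the PT construction. Multiplicativity of the generalized Thom spectrum functor (Corollary 8.1 of \cite{ABG}) promotes this null-homotopy to an $E_1$-ring map $\Sigma^\infty _+ \Omega X \to U(A)$, and the induced left action on $A$ recovers the generalized conjugation action.

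For Step 2, the key lemma $A \simeq U(A) \wedge^L _{\Sigma^\infty _+ \Omega X} S$ reflects, at the space level, the identification of $\Omega^n X$ as the homotopy quotient of $\Omega Map(S^{n-1}, X)$ by the $\Omega X$-action coming from $\iota$, encoded in the split fiber sequence $\Omega^n X \to \Omega Map(S^{n-1}, X) \to \Omega X$. The bundle $\phi$ restricts to (a map homotopic to) $\Omega^n f$ on the fiber $\Omega^n X$, by naturality of the Ayala--Francis construction under the disk inclusion $\mathbb{R}^n \hookrightarrow S^{n-1} \times \mathbb{R}$. Combined with the nullity of $\phi \circ \iota$ on $\Omega X$ from Step 1, applying the colimit-preserving generalized Thom spectrum functor to the pushout square realizing $\Omega^n X$ as this quotient yields the desired $U(A)$-module identification $A \simeq U(A) \wedge^L _{\Sigma^\infty _+ \Omega X} S$.

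Given Step 2, the conclusions follow from standard base-change manipulations:
\begin{align*}
THH^{E_n}(A) = A \wedge^L _{U(A)} A &\simeq (U(A) \wedge^L _{\Sigma^\infty _+ \Omega X} S) \wedge^L _{U(A)} A \simeq S \wedge^L _{\Sigma^\infty _+ \Omega X} A, \\
THH^\bullet _{E_n}(A) = Rhom_{U(A)}(A,A) &\simeq Rhom_{U(A)}(U(A) \wedge^L _{\Sigma^\infty _+ \Omega X} S, A) \simeq Rhom_{\Sigma^\infty _+ \Omega X}(S, A).
\end{align*}
The main obstacle is Step 2: while the underlying space-level picture is transparent, packaging it into a base-change equivalence compatible with all the relevant module structures requires $E_1$-coherence of the null-homotopy from Step 1 and care with the Thom spectrum functor's behavior on homotopy colimits. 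I expect this is most cleanly executed using the $\infty$-categorical Thom spectrum formalism of \cite{ABG} in the spirit of Beardsley's work \cite{Bea17}, with a cross-check at the point-set level via the two-sided bar construction and the multiplicative properties of the Lewis--May functor used in Section 3.
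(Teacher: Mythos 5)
Your proposal has the same architecture as the paper's proof: produce an $E_1$-ring map $\Sigma^\infty_+ \Omega X \to U(A)$ from the constant-maps inclusion $\Omega X \to \Omega Map(S^{n-1},X)$, establish $A \simeq U(A) \wedge^L_{\Sigma^\infty_+ \Omega X} S$ as $U(A)$-modules reflecting $\Omega^n X \simeq \Omega Map(S^{n-1},X)//\Omega X$, and then base-change. Where you differ is in how the nullhomotopy and the quotient identification are produced. The paper maps the cofiber sequence $S^{n-1}_+ \to S^0 \to S^n$ into $X$ to get the \emph{principal} fiber sequence $\Omega Map(S^{n-1},X) \to \Omega^n X \xrightarrow{ev_0} X$, whose leftward Barratt--Puppe extension yields $\Omega X \to \Omega Map(S^{n-1},X) \to \Omega^n X$; nullity of $\phi\circ\iota$ then follows formally because two consecutive arrows in a fiber sequence compose to zero, the principality gives you the $E_1$-structure on $\iota$ and the coherence of the nullhomotopy for free, and the quotient $\Omega^n X \simeq \Omega Map(S^{n-1},X)//\Omega X$ is the bar construction realized point-set-theoretically after replacing $i$ by a fibration $\Gamma i$. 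Your version instead works with the split looped evaluation fibration $\Omega^n X \to \Omega Map(S^{n-1},X) \to \Omega X$ and argues for nullity of $\phi\circ\iota$ directly from the Pontryagin--Thom collapse. That heuristic is correct (it is dual to the fiber-sequence statement) but it is harder to promote to an $E_1$-coherent nullhomotopy, which you rightly flag as the delicate point; the paper's route sidesteps this because the whole configuration is a single Barratt--Puppe sequence of $E_1$-maps, so the coherence is built in. You also should note that identifying the Thom datum $\phi$ of $U(A)$ with $(\Omega^n f)\circ i$ (rather than merely its restriction to the fiber $\Omega^n X$) is what lets one read off the bar construction $B(U(A),\Sigma^\infty_+\Omega X, S) \simeq A$ directly; the paper states and uses this identification, while your Step 2 invokes the weaker fact that $\phi$ restricts to $\Omega^n f$ on the fiber, which is not quite enough on its own. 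Finally, ``pushout square realizing $\Omega^n X$ as this quotient'' should be a two-sided bar construction (a geometric realization, not a pushout), as in the paper's $B(Th((\Omega^n f)\circ \Gamma i), \Sigma^\infty_+\Gamma(\Omega X), S)$. None of these is a fatal gap, but the paper's choice of fiber sequence is what makes the coherence argument clean, and your write-up would need to recover that point explicitly.
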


\begin{proof}
It suffices to show that

$$A \simeq B(S, \Sigma^\infty _+ \Omega X, U(A))$$

\noindent as $U(A)$-modules; this implies that 

$$THC_{E_n}(A) = Rhom_{U(A)}(A,A) \simeq Rhom_{\Sigma^\infty _+ \Omega X} (S, A)$$

\noindent and similarly for higher topological Hochschild homology.

\medskip

By Lemma \ref{lem-ring-map}, we have a principal fibration

$$\xymatrix{
\Omega X \ar[r]^-c & \Omega \Map(S^{n-1},X) \ar[r]^-i & \Omega^n X
}$$

\noindent in which $c$ is a loop map, and $\Omega^n X \simeq \Omega \Map(S^{n-1}, X)// \Omega X$. We show that this statement about homotopy orbits descends to Thom spectra. We replace $i:\Omega \Map(S^{n-1}, X) \to \Omega^n X$ with a fibration, which we will denote $ \tilde{i} :\tilde {\Omega} \Map(S^{n-1}, X) \to \Omega^n X$. This results in a fiber sequence

$$\tilde{\Omega} X \to \tilde{\Omega} \Map(S^{n-1}, X) \to \Omega^n X$$

\noindent in which the composite $\tilde{\Omega} X \to \Omega^n X$ is trivial, and $\Omega^n X = \tilde {\Omega} \Map(S^{n-1},X) / \tilde{\Omega} X$.

\medskip

We give explicit models for $\tilde {\Omega} \Map(S^{n-1},X)$ and $\tilde{\Omega} X$, in which it is easy to see that $\tilde{\Omega}X$ is still an algebra, and $\tilde{\Omega} \Map(S^{n-1},X)$ is still a module over it. Set $\tilde{\Omega} X$ to be $\Omega \Map(I^n, X)$, thought of as the space of maps $\phi: I^{n+1} \to X$ such that $\phi(0,s) = \phi(1,s) = *$ for all $s \in I^n$. Let $\tilde{\Omega}\Map(S^{n-1},X)$ be the space of maps $\phi: I^{n+1} \to X$ such that $\phi(1,s) = *$ for all $s \in I^n$, and $\phi(0,-) \in \Omega^n X$. Then $\tilde{\Omega} X$ includes into $\tilde{\Omega} \Map(S^{n-1}, X)$, and acts on it by concatenation in the first $I$ direction (on the $1 \in I$ end.) The map $\tilde{i}: \tilde{\Omega} \Map(S^{n-1}, X) \to \Omega^n X$ is given by evaluation on $0 \in I$.

\medskip

Then our new model for $\int_{S^{n-1} \times \mathbb{R}} A$ is $Th(\Omega^n f \circ \tilde{i})$. The composite $\tilde{\Omega} X \to \Omega^n X$ is trivial, thus the Thom spectrum of $\tilde{\Omega} X \to \Omega^n X \to BG$ is $\Sigma^\infty _+ \tilde{\Omega} X$. The action of $\tilde{\Omega} X$ on $\tilde{\Omega} \Map(S^{n-1}, X)$ is compatible with the maps to $BG$, thus descends to an action of $\Sigma^\infty _+ \tilde{\Omega} X$ on $\int_{S^{n-1} \times \mathbb{R}} A$. 

\medskip

By Corollary 8.1 of \cite{ABG}, the Thom spectrum functor is symmetric monoidal and preserves colimits. Thus $B(Th((\Omega^n f) \circ \tilde{i}), \Sigma^\infty _+ \tilde{\Omega} X, S) \simeq B(U(A), \Sigma^\infty_+ \Omega X, S)$ is the Thom spectrum of the map

$$\xymatrix{
B(\tilde {\Omega} \Map(S^{n-1},X), \tilde{\Omega} X, *) \ar[d] \\
B(\tilde {\Omega} \Map(S^{n-1},B^{n+1} G), \tilde{\Omega} B^{n+1}G, *) \ar[d]^-{(\tilde i,*,*)} \\
B(\Omega^n B^{n+1} G, *,*) \ar[d]^-{\wr} \\
\Omega^n B^{n+1} G = B(\Omega^n B^{n+1} G,\Omega^n B^{n+1} G,\Omega^n B^{n+1} G)
}$$

This fits into a commutative diagram

$$\xymatrix{
B(\tilde {\Omega} \Map(S^{n-1},X), \tilde{\Omega} X, *) \ar[r]^-{\sim} \ar[d] & \Omega^n X \ar[d]^-{\Omega^n f} \\
\Omega^n B^{n+1} G \ar[r]^{=} & \Omega^n B^{n+1} G
}$$

Thus

$$B(Th((\Omega^n f) \circ \tilde{i}), \Sigma^\infty _+ \tilde{\Omega} X, S) \simeq A$$

\noindent and we obtain an equivalence $B(U(A), \Sigma^\infty _+ \Omega X, S) \to A$, as required. As the map $U(A) \to A$ induced by $i: \Omega \Map(S^{n-1},X) \to \Omega^n X$ is a map of $U(A)$-modules, so is this equivalence.
\end{proof}

This theorem is analogous to the description of Hochschild cohomology of Hopf $k$-algebras with antipode, which is given by $Rhom_A(k,A)$, with $A$ acting on itself by conjugation, see, e.g., Section 3.2 and Proposition A.1.8 of \cite{Mal}.

\bigskip

We now interpret this theorem through the lens of parametrized spectra and Atiyah duality.

\medskip

Given spaces $X$ and $F$, an $F$-bundle over $X$ is obtained from an action of $\Omega X$ on $F$; the total space is then given by $E\Omega X \times_{\Omega X} F$. Similarly, a parametrized spectrum over $X$, with fiber spectrum $\mathbf{F}$, is obtained from an action of $\Omega X$ on $\mathbf{F}$. For an overview of parametrized spectra and their homology and cohomology, see, e.g., Section 1 of \cite{CJ13}.

The action of $\Sigma_+ ^\infty \Omega X$ on $A = \Omega^n X ^{\Omega^n f}$ thus gives a  a parametrized spectrum over $X$ with fiber spectrum $A$. Its cohomology spectrum, equivalently spectrum of sections, is $Rhom_{\Sigma_+ ^\infty \Omega X}(S,A)$, which by Theorem \ref{thm-conj-rhom} is equivalent to $THC_{E_n}(A,A)$. Its homology spectrum is equivalent to the derived smash product $B(A, \Sigma_+ ^\infty \Omega X, S)$, and this is equivalent to $THH^{E_n}(A)$ by Theorem \ref{thm-conj-rhom}.

\medskip

We restrict to the case in which $X = M$ is a closed, connected manifold. By Atiyah duality for parametrized spectra (e.g., Theorem 10 of \cite{CJ13}), the cohomology spectrum, that is, the section spectrum $THC_{E_n}(A,A)$, can be obtained from the homology spectrum via twisting by $-TM$, the stable normal bundle to $M$. More explicitly, if $THH^{E_n}(A)$ can be obtained as a Thom spectrum $\Map(S^n,M)^{l^n(f)}$ (this is the case when $THH^{E_n}$ agrees with factorization homology), we take the Whitney sum of $l^n(f)$ with the virtual bundle $-TM$. Recall from Proposition \ref{prop-higher-hopf} that for $n=1,3,7$, the description of $l^n(f)$ involves the Hopf map and its higher dimensional analogues.

Even if $THH^{E_n}(A)$ is not necessarily a Thom spectrum of a virtual bundle over the mapping space, it is still a Thom spectrum over $M$; that is, the homology spectrum of a parametrized spectrum over $M$. As such, we can still twist it by $-TM$, which can also be seen as coming from a parametrized spectrum over $M$. In summary,

\begin{cor}\label{cor-atiyah-duality}
For $A= \Omega^n M^{\Omega^n f}$, the $E_n$ Hochschild cohomology $THC_{E_n}(A)$ is a twisting by $-TM$ of the $E_n$ Hochschild homology $THH^{E_n}(A)$. Therefore, if $l^n(f): \Map(S^n, M) \to BG$ is such that $THH^{E_n}(A) \simeq \Map(S^n, M)^{l^n(f)}$, then
$$THC_{E_n}(A) \simeq \Map(S^n,M)^{l^n(f)-TM}$$
\end{cor}

\begin{rem}\label{rem-string-topology}
For $A= \Sigma^\infty _+ \Omega^n M$, this gives 
$$THC_{E_n}(\Sigma^\infty _+ \Omega^n M) \simeq \Map(S^n, M)^{-TM}$$
\noindent and the string topology operations on $H_*(\Map(S^n,M))$, see, e.g., Theorem 1 of \cite{CJ02} for $n=1$ and Theorem 1.2 of \cite{Hu} or Theorem 7.1 of \cite{GTZ12} for higher $n$.
\end{rem}

The spectrum $THC_{E_n}(A)$ is an $E_{n+1}$-algebra by the higher Deligne conjecture (see, e.g., Theorem 6.28 of \cite{GTZ12}).

\begin{cor}\label{cor-En+1}
The spectrum $\Map(S^n,M)^{l^n(f)-TM}$ is an $E_{n+1}$-ring spectrum.
\end{cor}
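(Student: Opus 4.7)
The plan is to derive this as a direct consequence of the higher Deligne conjecture applied to Theorem \ref{thm-conj-rhom}, combined with the Atiyah duality identification in Proposition \ref{prop-atiyah-duality}. First, I would observe that the Thom spectrum $A = \Omega^n M^{\Omega^n f}$ is an $E_n$-ring spectrum: in the classical case this is Theorem \ref{thm-Lewis} of Lewis applied to the $\mathcal{D}_n$-map $\Omega^n f$, and for generalized Thom spectra it follows from the symmetric monoidal properties of the Thom spectrum functor used in Section 4. The higher Deligne conjecture (cited, e.g., as Theorem 6.28 of \cite{GTZ12}) then automatically upgrades $THH^\bullet_{E_n}(A)$ to a canonical $E_{n+1}$-ring spectrum.

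Next, I would invoke Proposition \ref{prop-atiyah-duality}, which, under the standing hypothesis $THH^{E_n}(A) \simeq Map(S^n, M)^{l^n(f)}$, identifies $THH^\bullet_{E_n}(A)$ with $Map(S^n, M)^{l^n(f) - TM}$. This hypothesis is exactly what is established in the Blumberg--Cohen--Schlichtkrull case for $n = 1$ and in Proposition \ref{prop-higher-hopf} for $n = 3, 7$, where $S^n$ is stably framed; in the general framed setting it is guaranteed by Theorem \ref{thm-main} (or Theorem \ref{thm-framed-gen} for generalized Thom spectra). Transporting the $E_{n+1}$-structure across this equivalence yields the desired $E_{n+1}$-ring structure on $Map(S^n, M)^{l^n(f) - TM}$.

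The only subtlety I anticipate is ensuring that the equivalence of Proposition \ref{prop-atiyah-duality} is respected on the nose by the $E_{n+1}$-structure, rather than being a bare equivalence of underlying spectra. Since Atiyah duality for parametrized spectra is a natural, functorial equivalence between the section spectrum and the Thom spectrum of the relevant twist, and the $E_{n+1}$-structure produced by higher Deligne is canonical on the derived mapping object, the transport is automatic. No genuine obstacle remains: the corollary is a formal consequence of assembling Theorem \ref{thm-conj-rhom}, the parametrized-spectrum Atiyah duality of Proposition \ref{prop-atiyah-duality}, and the higher Deligne conjecture.
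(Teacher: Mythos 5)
Your proposal is correct and follows the paper's argument exactly: the higher Deligne conjecture makes $THH^\bullet_{E_n}(A)$ an $E_{n+1}$-ring spectrum, and Proposition~\ref{prop-atiyah-duality} identifies this spectrum with $Map(S^n,M)^{l^n(f)-TM}$, so the $E_{n+1}$-structure transports. The extra paragraph worrying about whether the equivalence respects the $E_{n+1}$-structure is unnecessary, since one is simply transferring structure along an equivalence of spectra, and the paper omits it for this reason.
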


\subsection{Calculations}

\begin{prop}\label{prop-triv-action}
If $f: X \to B^{n+1} G$ is an $E_1$-map, then the action of $\Sigma^\infty _+ \Omega X$ on $A= \Omega^n X ^{\Omega^n f}$ is homotopically trivial. That is, the map $GL_1(\Sigma^\infty _+ \Omega X) \to hAut(A)$ is nullhomotopic via $E_1$-maps.
\end{prop}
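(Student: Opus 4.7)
The aim is to show that the ring map $\phi : \Sigma^\infty_+ \Omega X \to U(A)$ constructed in the proof of Theorem~\ref{thm-conj-rhom} is homotopic, as a ring map, to the composite $\Sigma^\infty_+ \Omega X \to S \to U(A)$ of the augmentation followed by the unit. Since the action of $\Sigma^\infty_+ \Omega X$ on $A$ is defined as $\phi$ composed with the canonical $U(A)$-action on $A$, any such factorization through the unit immediately trivializes the action: the action map $\Sigma^\infty_+ \Omega X \wedge A \to A$ becomes homotopic to $\epsilon \wedge \mathrm{id}_A$ followed by the identification $S \wedge A \simeq A$.

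First I would exploit the $E_1$-hypothesis to put additional structure on everything in sight. An $E_1$-map $f : X \to B^{n+1}G$ means that $X$ is an $E_1$-space and $f$ preserves this structure; by Dunn additivity, applying $\Omega^n$ turns $\Omega^n f$ into an $E_{n+1}$-map, so $A = \Omega^n X^{\Omega^n f}$ upgrades from an $E_n$- to an $E_{n+1}$-ring spectrum. Equivalently, $A$ carries an additional $E_1$-structure commuting with its original $E_n$-structure, and $U(A)$ inherits a compatible $E_1$-module structure over $A$.

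Next I would revisit the space-level ingredients of $\phi$: by construction, $\phi$ arises from the inclusion of constants $c : X \to Map(S^{n-1}, X)$ after looping once. The $E_1$-structure on $X$ makes $\Omega X$ group-like and homotopy-commutative enough that the map $\Omega c$ can be trivialized: using the multiplication $\mu$ and the homotopy inverse on $X$, one can produce a homotopy $t \mapsto \mu(c(-), c(-)^{-1}_t)$ between $\Omega c$ and the constant map at the basepoint. Equivalently, viewing $c$ as the section of evaluation at $0 \in D^n$ in the fiber sequence $\Omega Map(S^{n-1}, X) \to \Omega^n X \to X$ used in Theorem~\ref{thm-conj-rhom}, the extra $E_1$-structure gives a section of the inclusion $\Omega X \to \Omega Map(S^{n-1}, X)$ back onto the basepoint.

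The main obstacle is coherence: I must lift this space-level null-homotopy to a null-homotopy of $\phi$ \emph{as a ring map of Thom spectra over} $BG$, not merely as a map of underlying spectra. This is where the $E_1$-compatibility of $f$ is essential. Because $f$ is an $E_1$-map, the multiplication-based trivialization on $X$ is covered by a coherent trivialization of the pulled-back bundle data over $B^{n+1}G$, so that after applying Lewis--May (or generalized) Thomification the homotopy of underlying maps promotes to a homotopy over $BG$, and the ring-map property is preserved by the $E_1$-compatibility between $\mu$ and the ring structures on $\Sigma^\infty_+ \Omega X$ and on $U(A)$. Granting this coherence step, $\phi$ becomes homotopic to the unit through ring maps, and the asserted homotopy-triviality of the action follows.
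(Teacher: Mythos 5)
Your proposal has a genuine gap that invalidates the whole strategy. You aim to show that the ring map $\phi : \Sigma^\infty_+ \Omega X \to U(A)$ is itself homotopic to the unit, via a trivialization of the looped inclusion-of-constants $\Omega c : \Omega X \to \Omega\, Map(S^{n-1},X)$. But $c : X \to Map(S^{n-1},X)$ is \emph{split} injective: evaluation at a point gives a retraction $r$ with $r \circ c = \mathrm{id}_X$, so $\Omega r \circ \Omega c = \mathrm{id}_{\Omega X}$. If $\Omega c$ were nullhomotopic, then so would be $\mathrm{id}_{\Omega X}$, forcing $\Omega X$ to be contractible. So no formula of the shape $t\mapsto\mu(c(-),c(-)^{-1}_t)$ can produce a nullhomotopy, and having a ``section of the inclusion $\Omega X \to \Omega\, Map(S^{n-1},X)$'' is the opposite of a nullhomotopy. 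Already for $n=1$ the relevant map is essentially $g \mapsto (g,g^{-1})$ into $\Omega X \times (\Omega X)^{\mathrm{op}}$, a split mono of groups, which is never nullhomotopic unless $\Omega X$ is trivial. In short, $\phi$ is genuinely nontrivial, so your route cannot close.

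The paper's argument instead trivializes the longer composite $\Sigma^\infty_+ \Omega X \to U(A) \to A$, which on spaces is $\Omega X \to \Omega\, Map(S^{n-1},X) \to \Omega^n X$ --- two consecutive maps in a fiber sequence, hence canonically nullhomotopic, and the nullhomotopy comes from a nullhomotopy of a map $S^n \to S^1$, so it is through $E_1$-maps when $X$ and $f$ are $E_1$; consequently the Thom-spectrum map $\Sigma^\infty_+ \Omega X \to A$ is nullhomotopic via $E_1$-maps. But trivializing that composite alone is not enough either; you must combine it with the $E_{n+1}$-structure on $A$ (which you correctly produce via Dunn additivity) to see that the action of $U(A) = \int_{S^{n-1}\times\mathbb{R}} A$ on $A$ \emph{factors through} the ring map $U(A) \to \int_{D^n} A = A$ induced by the embedding $S^{n-1}\times\mathbb{R} \hookrightarrow D^n$, followed by the multiplication $A \wedge A \to A$. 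This factorization of the $U(A)$-action --- the step absent from your outline --- is precisely what converts the nullhomotopy of $\Sigma^\infty_+ \Omega X \to A$ into a trivialization of the action of $\Sigma^\infty_+ \Omega X$ on $A$.
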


\begin{proof}
Recall, from the proof of Theorem \ref{thm-conj-rhom}, that $\Sigma^\infty _+ \Omega X$ is the Thom spectrum of the map

$$\xymatrix{
\Omega X \ar[r] & \Omega \Map(S^{n-1}, X) \ar[r]^-{i} & \Omega^n X \ar[r]^{\Omega^n f} & BG
}$$

If $X$ is $E_1$, then the composite $\Omega X \to \Omega^n X$ is an $E_1$-map, which is furthermore nullhomotopic via $E_1$-maps, as its nullhomotopy is induced by a nullhomotopy of a map $S^n \to S^1$. If $f$ is an $E_1$-map, then the resulting map on Thom spectra, $\Sigma^\infty _+ \Omega X \to A$, is nullhomotopic via $E_1$-maps. Note that the action $\Sigma^\infty_+ \Omega X \wedge A \to A$ does not, in general, factor as $\Sigma^\infty_+ \Omega X \wedge A \to A \wedge A \to A$. For example, if $A = \Sigma^\infty _+ \Omega X$, the conjugation action does not factor in this way. In this case, however, $A$ is $E_{n+1}$, so the action of $\int_{S^{n-1} \times \mathbb{R}} A$ on $A$ factors as

$$(\int_{S^{n-1} \times \mathbb{R}} A) \wedge A \to A \wedge A \to A$$

\noindent via the map $\int_{S^{n-1} \times \mathbb{R}} A \to \int_{\mathbb{R}^n} A$ induced by the usual embedding $S^{n-1} \times \mathbb{R} \hookrightarrow \mathbb{R}^n$. For let $e: (S^{n-1} \times \mathbb{R}) \coprod \mathbb{R}^n \hookrightarrow \mathbb{R}^n$ be the embedding defining the module structure $(\int_{S^{n-1} \times \mathbb{R}} A) \wedge A \to A$. Then $e \times \mathbb{R}: (S^{n-1} \times \mathbb{R}^2) \coprod \mathbb{R}^{n+1} \hookrightarrow \mathbb{R}^{n+1}$ is homotopic to

$$(S^{n-1} \times \mathbb{R}^2) \coprod \mathbb{R}^{n+1} \hookrightarrow\mathbb{R}^{n+1} \coprod \mathbb{R}^{n+1} \hookrightarrow \mathbb{R}^{n+1}$$

Thus, the action $\Sigma^\infty _+ \Omega X \wedge A \to A$ factors through the usual left action $A \wedge A \to A$. As $\Sigma^\infty_+ \Omega X \to A$ is nullhomotopic though $E_1$-maps, the action is homotopically trivial.
\end{proof}

\begin{cor}\label{cor-triv-action}
If $f: X \to B^{n+1} G$ is an $E_1$-map and $X$ is $(n-1)$-connected, then
$$THH^{E_n}(A,A) \simeq A \wedge X_+$$
$$THC_{E_n}(A,A) \simeq \Map(X_+,A)$$
\end{cor}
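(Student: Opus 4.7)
The plan is to combine Theorem \ref{thm-conj-rhom} with Proposition \ref{prop-triv-action} and then apply a change-of-rings computation along the augmentation $\epsilon: \Sigma^\infty _+ \Omega X \to S$ induced by $\Omega X \to *$. By Theorem \ref{thm-conj-rhom}, the two claims reduce to identifying
$$S \wedge^L _{\Sigma^\infty _+ \Omega X} A \simeq A \wedge X_+ \quad \text{and} \quad Rhom_{\Sigma^\infty _+ \Omega X}(S, A) \simeq Map(X_+, A),$$
where $A$ carries the generalized conjugation action of $\Sigma^\infty _+ \Omega X$. Proposition \ref{prop-triv-action} tells us that under the $E_1$-hypothesis on $f$, this action is homotopically trivial, so $A$ is equivalent, as a $\Sigma^\infty _+ \Omega X$-module, to the restriction along $\epsilon$ of $A$ viewed merely as an $S$-module.

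Granted this, the extension-restriction adjunction along $\epsilon$ gives
$$S \wedge^L _{\Sigma^\infty _+ \Omega X} A \simeq (S \wedge^L _{\Sigma^\infty _+ \Omega X} S) \wedge A \quad \text{and} \quad Rhom_{\Sigma^\infty _+ \Omega X}(S, A) \simeq Rhom_S(S \wedge^L _{\Sigma^\infty _+ \Omega X} S, A).$$
I would then identify $S \wedge^L _{\Sigma^\infty _+ \Omega X} S$ by the two-sided bar construction:
$$S \wedge^L _{\Sigma^\infty _+ \Omega X} S \simeq B(S, \Sigma^\infty _+ \Omega X, S) \simeq \Sigma^\infty _+ B\Omega X \simeq X_+,$$
the last equivalence using that $X$ is connected, which holds since $X$ is $(n-1)$-connected with $n \geq 1$. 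Substituting back yields both equivalences, noting that $Rhom_S(X_+, A) \simeq Map(X_+, A)$.

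The step requiring the most care is promoting the \emph{homotopical} triviality of the action from Proposition \ref{prop-triv-action} to an equivalence of $\Sigma^\infty _+ \Omega X$-module structures on $A$, so that the change-of-rings reductions above are strictly justified. This should follow from the explicit Thom-spectrum model for $A$ as a $U(A)$-module used in the proof of Theorem \ref{thm-conj-rhom}: the $E_1$-hypothesis on $f$ allows one to replace the loop map $\Omega X \to \Omega^n X$ feeding into that construction by a null-homotopy through $E_1$-maps, which rigidifies the resulting $\Sigma^\infty _+ \Omega X$-action so that it factors through $\epsilon$. Once that rigidification is in hand, the remaining identifications are formal.
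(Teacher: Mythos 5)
Your proposal is correct and spells out exactly the argument the paper leaves implicit: combine Theorem \ref{thm-conj-rhom} with Proposition \ref{prop-triv-action}, then run the change-of-rings along the augmentation $\epsilon: \Sigma^\infty_+ \Omega X \to S$, using $S \wedge^L_{\Sigma^\infty_+ \Omega X} S \simeq \Sigma^\infty_+ B\Omega X \simeq \Sigma^\infty_+ X$. You also correctly flag and resolve the one subtle point: the $E_1$-nullhomotopy of the ring map $\Sigma^\infty_+ \Omega X \to A$ established in the proof of Proposition \ref{prop-triv-action} is what upgrades ``homotopically trivial action'' to a genuine equivalence $A \simeq \epsilon^* A$ of $\Sigma^\infty_+ \Omega X$-modules, which is what the change-of-rings step requires.
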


\begin{rem}\label{rem-PD-triv}
If $X$ is additionally a closed manifold, the Poincar\'e duality of Corollary \ref{cor-atiyah-duality} between higher Hochschild homology and higher Hochschild cohomology becomes especially clear. The statement becomes usual Poincar\'e duality of homology with coefficients in $A$ and cohomology with coefficients in $A$.
\end{rem}

\begin{rem}\label{rem-higher-deligne}
If $A$ is an $E_{n+1}$-ring spectrum, $\Map(X_+,A)$ is an $E_{n+1}$-ring spectrum in a natural way. It would be interesting to know whether this agrees with the $E_{n+1}$ structure on higher Hochschild cohomology given by the higher Deligne conjecture.
\end{rem}

This recovers computations of topological Hochschild cohomology of $\mathbb{Z}/p$ (\cite{FLS}, 7.3) and $\mathbb{Z}_{(p)}$ (\cite{FP}):
$$THC(H\mathbb{Z}/p) \simeq \Map(\Omega S^3 _+, H\mathbb{Z}/p)$$
$$THC(H\mathbb{Z}_{(p)}) \simeq \Map(\Omega (S^3 \langle 3 \rangle)_+, H\mathbb{Z}_{(p)})$$

We can also calculate $THC_{E_2}$ of $H\mathbb{Z}/p$ and $H\mathbb{Z}_{(p)}$:

\begin{prop}\label{prop-E_2-EM}
$$THC_{E_2}(H\mathbb{Z}/p) \simeq \Map(S^3 _+, H\mathbb{Z}/p)$$
$$THC_{E_2}(H\mathbb{Z}_{(p)}) \simeq \Map(S^3 \langle 3 \rangle _+, H\mathbb{Z}_{(p)})$$
\end{prop}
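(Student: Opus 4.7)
The plan is to apply Corollary~\ref{cor-triv-action} with $n=2$, using Thom spectrum descriptions whose ``base'' is the $2$-fold delooping of the domain in the Hopkins/Mahowald realization. Since the $E_2$-map $\gamma_p\colon \Omega^2 S^3 \to BGL_1(S_{(p)})$ has $B^2(\Omega^2 S^3) \simeq S^3$ (as $S^3$ is simply connected), it arises as $\Omega^2 f_p$ for a pointed map $f_p\colon S^3 \to B^3 GL_1(S_{(p)})$ classified by $(1-p) \in \pi_3 B^3 GL_1(S_{(p)}) = \pi_0 GL_1(S_{(p)})$; similarly, the map $\beta$ of Lemma~\ref{lem-gp-action-cover} is $\Omega^2 f_p'$ for a pointed map $f_p'\colon S^3\langle 3\rangle \to B^3 SL_1(S_{(p)})$. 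With $X = S^3$ (resp.~$S^3\langle 3\rangle$) and $A = H\mathbb{Z}/p$ (resp.~$H\mathbb{Z}_{(p)}$), Corollary~\ref{cor-triv-action} then yields $THH^\bullet_{E_2}(A) \simeq Map(X_+, A)$ provided that $f_p$ (resp.~$f_p'$) is an $E_1$-map; using $B\Omega X \simeq X$ for the simply connected $X$ in question, this gives exactly the stated equivalences.

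The main content of the proof is therefore verifying the $E_1$-structure on $f_p$ and $f_p'$. The source $S^3 = SU(2)$ is a Lie group; $S^3\langle 3\rangle$ is known to be an $H$-space as well (this is standard in the study of its loop space, which is what gets Thomified to give $H\mathbb{Z}_{(p)}$). The target $B^3 GL_1(S_{(p)})$ is an infinite loop space, and the obstruction to $H$-multiplicativity of the classifying $\pi_3$-class lies in $\pi_6$ of the target. I would argue these obstructions vanish by tracing through Hopkins' construction: starting from the $E_1$-map $\mathbb{Z} \to GL_1(S_{(p)})$, passing to the free $E_2$-extension $\gamma_p$, and then verifying that two further deloopings preserve the required multiplicative structure, analogous to how $B\gamma_p\colon \Omega S^3 \to B^2 GL_1(S_{(p)})$ is the $E_1$-deloop of $\gamma_p$ used in the $n=1$ case of Corollary~\ref{cor-triv-EM-p}.

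An alternative route avoids this verification altogether: in the proof of Proposition~\ref{prop-triv-action}, the hypothesis that $f$ is $E_1$ is used only to conclude that $A$ is $E_{n+1}$, which in turn lets the action of $U(A)$ on $A$ factor through the multiplication $A \wedge A \to A$. Since $H\mathbb{Z}/p$ and $H\mathbb{Z}_{(p)}$ are already $E_\infty$ (in particular $E_3$) by virtue of being Eilenberg--MacLane spectra, this step goes through directly, and the remaining content of Proposition~\ref{prop-triv-action}, namely that $\Omega X \to \Omega^n X$ is null-homotopic through $E_1$-maps when $X$ is an $H$-space, applies unchanged. This is the approach I would take in writing a clean proof; the main obstacle is then merely the bookkeeping to ensure the $E_{n+1}$-structure used is compatible with the $E_2$-structure on $A$ coming from the Thom spectrum description.
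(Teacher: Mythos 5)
Your first route does not work, and your second route is the right idea but concludes by a different (and more involved) argument than the paper.

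\textbf{The first route has a genuine gap.} You need $f_p\colon S^3 \to B^3 GL_1(S_{(p)})$ to be an $E_1$-map, which is the same as asking the Hopkins map $\gamma_p\colon \Omega^2 S^3 \to BGL_1(S_{(p)})$ to be an $E_3$-map. Hopkins's construction produces $\gamma_p$ as the free grouplike $E_2$-extension of the class $1-p \in \pi_1 BGL_1(S_{(p)})$, and no more; the obstruction you mention living in $\pi_6$ of the target is not something you can wave away, and in fact the failure of $\gamma_p$ to be $E_3$ is exactly why $H\mathbb{Z}/p$ is only exhibited as a Thom spectrum of a $2$-fold loop map. This is also reflected in the structure of the paper: Corollary~\ref{cor-triv-EM-p} (the $n=1$ computation of $THH^\bullet(H\mathbb{Z}/p)$) is deduced from Corollary~\ref{cor-triv-action}, because there the relevant map $B\gamma_p\colon \Omega S^3 \to B^2 GL_1(S_{(p)})$ \emph{is} $E_1$ (since $\gamma_p$ is $E_2$). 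Proposition~\ref{prop-E_2-EM} deliberately does \emph{not} go through Corollary~\ref{cor-triv-action}, precisely because the needed $E_1$-structure on $f_p$ is not available.

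\textbf{The second route is correct in its first step but the paper finishes differently.} You rightly observe that the $E_1$-hypothesis on $f$ in Proposition~\ref{prop-triv-action} is used only to make $A$ into an $E_{n+1}$-algebra so that the $U(A)$-action on $A$ factors through a ring map $U(A) \to A$, and that $H\mathbb{Z}/p$ and $H\mathbb{Z}_{(p)}$ are $E_\infty$ so this step is free (with the compatibility of $E_\infty$- and $E_2$-Thom structures supplied by the $\mathcal{L}$-algebra structure in Lemma~\ref{lem-equiv-E_2-fr} and its $p$-odd analogue). The paper does exactly this. But where you then propose to reuse ``the remaining content of Proposition~\ref{prop-triv-action}'' --- the $E_1$-nullhomotopy of $\Omega X \to \Omega^n X$ for $X$ an $H$-space --- and worry about the resulting bookkeeping, the paper takes a shorter path: once the action is governed by a ring map $\Sigma^\infty_+ \Omega S^3 \to H\mathbb{Z}/p$, adjunction and grouplikeness of $\Omega S^3$ give a map of monoids $\Omega S^3 \to GL_1(H\mathbb{Z}/p)$; since $GL_1(H\mathbb{Z}/p) \simeq (\mathbb{Z}/p)^\times$ is discrete and $\Omega S^3$ is connected, this map is constant at the identity, so the action is trivial. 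No $E_1$-nullhomotopy or source $H$-space structure is needed, which is why your ``bookkeeping'' obstacle evaporates. You should adopt this ending: from the ring map $\Sigma^\infty_+ \Omega X \to A$, pass to $\Omega X \to GL_1(A)$ and use that $GL_1(H\mathbb{Z}/p)$ (and likewise $GL_1(H\mathbb{Z}_{(p)})$) has trivial higher homotopy.
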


\begin{proof}
For ease of notation, we will focus on $H\mathbb{Z}/p$; the proof for $H\mathbb{Z}_{(p)}$ is identical. 

As in the proof of Proposition \ref{prop-triv-action}, by the commutativity of $H\mathbb{Z}/p$, the action of $\int_{S^1 \times \mathbb{R}} H\mathbb{Z}/p$ on $H\mathbb{Z}/p$ factors through the left action of $H\mathbb{Z}/p$ on itself:

$$(\int_{S^1 \times \mathbb{R}} H\mathbb{Z}/p) \wedge H\mathbb{Z}/p \to H\mathbb{Z}/p \wedge H\mathbb{Z}/p \to H\mathbb{Z}/p$$

Again the commutativity of $H\mathbb{Z}/p$ implies that $\int_{S^1 \times \mathbb{R}} H\mathbb{Z}/p \to H\mathbb{Z}/p$ is a ring map. Composing this with the ring map $\Sigma^\infty_+ \Omega S^3 \to \int_{S^1 \times \mathbb{R}} H\mathbb{Z}/p$, we have a ring map $\Sigma^\infty_+ \Omega S^3 \to H\mathbb{Z}/p$ and the higher conjugation action factors as

$$\Sigma^\infty_+ \Omega S^3 \wedge H\mathbb{Z}/p \to H\mathbb{Z}/p \wedge H\mathbb{Z}/p \to H\mathbb{Z}/p$$

The action of $\Sigma^\infty_+ \Omega S^3$ determines a map $\Omega S^3 \to hAut(H\mathbb{Z}/p)$. As the action factors though the left action of $H\mathbb{Z}/p$ on itself, this map factors through $\Omega S^3 \to GL_1(H\mathbb{Z}/p)$. But $GL_1(H\mathbb{Z}/p)$ is homotopy equivalent to $(\mathbb{Z}/p)^{\times}$, therefore discrete, and $\Omega S^3$ is connected, so this map is nullhomotopic, and the action of $\Sigma^\infty _+ \Omega S^3$ on $H\mathbb{Z}/p$ is homotopically trivial.
\end{proof}

Proposition \ref{prop-triv-action} also allows us to calculate the $E_n$ topological Hochschild cohomology of cobordism spectra:

\begin{cor}\label{cor-triv-cob}
Let $G$ be one of the stabilized Lie groups ($O$,$U$,$SO$,...) or one of the stabilized discrete groups $\Sigma$ or $GL(\mathbb{Z})$. Then
$$THC_{E_n}(MG) \simeq \Map(B^{n+1} G _+, MG)$$
\end{cor}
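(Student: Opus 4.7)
The plan is to realize $MG$ in the form $\Omega^n X^{\Omega^n f}$ for some pointed $(n-1)$-connected space $X$ and some $E_1$-map $f: X \to B^{n+1}O$, so that Corollary \ref{cor-triv-action} applies directly.

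For $G$ one of the stabilized Lie groups ($O$, $SO$, $U$, $Sp$, etc.), the classifying space $BG$ is an infinite loop space and the classifying map $BG \to BO$ is an infinite loop map between infinite loop spaces. I would set $X := B^{n+1}G$, the $(n+1)$-fold delooping of the infinite loop space $BG$, and take $f : B^{n+1}G \to B^{n+1}O$ to be the $(n+1)$-fold delooping of $BG \to BO$. Then $\Omega^n X \simeq BG$, the map $\Omega^n f$ recovers $BG \to BO$ up to homotopy, and hence $MG \simeq \Omega^n X^{\Omega^n f}$. Moreover $X = B^{n+1}G$ is $n$-connected, so the connectivity hypothesis of Theorem \ref{thm-conj-rhom} is satisfied, and $f$, being an infinite loop map, is in particular an $E_1$-map.

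For the discrete groups $G = \Sigma$ and $G = GL(\mathbb{Z})$, the same argument goes through after replacing $BG$ by its Quillen plus construction $(BG)^+$, which is an infinite loop space. As used in the proof of Corollary \ref{cor-sys-gps}, the plus construction is an acyclic equivalence that does not affect the Thom spectrum, so $MG$ is still equivalent to the Thom spectrum of the infinite loop map $(BG)^+ \to BO$. Accordingly, I would interpret $B^{n+1}G$ as $B^{n+1}((BG)^+)$ and argue as above.

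Once $MG$ is presented this way, Proposition \ref{prop-triv-action} immediately gives that the generalized conjugation action of $\Sigma^\infty_+ \Omega X$ on $MG$ is homotopically trivial, and Corollary \ref{cor-triv-action} then yields
$$THH^\bullet_{E_n}(MG) \simeq Map(X_+, MG) = Map(B^{n+1}G_+, MG).$$
There is no real obstacle here: the entire argument amounts to recognizing that the classifying maps $BG \to BO$ (or $(BG)^+ \to BO$) deloop to infinite loop maps, at which point the $E_1$ triviality principle of the preceding subsection does all the work.
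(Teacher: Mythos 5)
Your proof is correct and follows the route the paper clearly intends: since the paper does not supply an explicit argument for this corollary, it is implicit that one should present $MG$ as $\Omega^n X^{\Omega^n f}$ with $X = B^{n+1}G$ (or $B^{n+1}((BG)^+)$ in the discrete cases) and $f$ the $(n+1)$-fold delooping of the classifying/infinite loop map $BG \to BO$, then invoke Proposition \ref{prop-triv-action} and Corollary \ref{cor-triv-action}. You correctly verify that $X$ is $n$-connected (hence $(n-1)$-connected as required by Theorem \ref{thm-conj-rhom}), that $f$ is an $E_1$-map because it is an infinite loop map, and that the plus construction does not change the Thom spectrum, exactly as in Corollary \ref{cor-sys-gps}.
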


Thus (higher) topological Hochschild cohomology of $MG$ is given by $G$-cobordism cohomology of spaces in the spectrum associated to $BG$. For example,
$$THC^*(MU) \cong MU^*(SU)$$ 
\noindent In general, higher topological Hochschild cohomology of $MU$ will alternate between $MU$-cohomology of connected covers of $U$ and $BU$.

\medskip

We can also calculate the topological Hochschild cohomology of the spectra $X(n)$. These spectra, introduced by Ravenel (see e.g. Section 6.5 of \cite{Rav}), provide a filtration of $MU$ and played an important role in the proof of the nilpotence theorem \cite{DHS}. The spectrum $X(n)$ is defined as the Thom spectrum of the map
$$\Omega SU(n) \to \Omega SU \simeq BU$$
\noindent Note that $X(n)$ is an $E_2$-ring spectrum, as it is the Thom spectrum of an $E_2$-map.

\begin{cor}\label{cor-triv-X(n)}
$$THC(X(n)) \simeq \Map(SU(n)_+, X(n))$$
\end{cor}

The Lie group $SU(n)$ has trivial tangent bundle, so this is equivalent to $\Sigma^{-d} SU(n)_+ \wedge X(n)$, where $d=\dim SU(n)$. Notice that this is indeed a shift, or twist by a trivial tangent bundle, of $THH(X(n))$; as in \cite{Bea}, $THH(X(n)) \simeq X(n) \wedge SU(n)_+$.

\bibliographystyle{plain}
\bibliography{Factorization_homology_of_Thom_spectra}

\end{document}